\documentclass[11pt]{amsart}

\usepackage[T1]{fontenc}
\usepackage{amsmath,amssymb,mathtools}
\usepackage{aliascnt}
\usepackage[hidelinks]{hyperref}
\usepackage[nameinlink,noabbrev]{cleveref}

\newtheorem{theorem}{Theorem}[section]
\newaliascnt{proposition}{theorem}
\newtheorem{proposition}[proposition]{Proposition}
\aliascntresetthe{proposition}
\newaliascnt{lemma}{theorem}
\newtheorem{lemma}[lemma]{Lemma}
\aliascntresetthe{lemma}
\newaliascnt{corollary}{theorem}
\newtheorem{corollary}[corollary]{Corollary}
\aliascntresetthe{corollary}
\theoremstyle{definition}
\newaliascnt{definition}{theorem}

\aliascntresetthe{definition}
\theoremstyle{remark}
\newaliascnt{remark}{theorem}
\newtheorem{remark}[remark]{Remark}
\aliascntresetthe{remark}
\numberwithin{equation}{section}

\crefname{theorem}{Theorem}{Theorems}
\crefname{proposition}{Proposition}{Propositions}
\crefname{lemma}{Lemma}{Lemmas}
\crefname{corollary}{Corollary}{Corollaries}
\Crefname{theorem}{Theorem}{Theorems}
\Crefname{proposition}{Proposition}{Propositions}
\Crefname{lemma}{Lemma}{Lemmas}
\Crefname{corollary}{Corollary}{Corollaries}

\newcommand{\R}{\mathbb R}
\newcommand{\E}{\textsf{E}}
\newcommand{\Pp}{\textsf{P}}
\newcommand{\Var}{\operatorname{Var}}
\newcommand{\cP}{\mathcal P}
\newcommand{\ip}[2]{\left\langle #1,#2\right\rangle}
\newcommand{\norm}[1]{\left\lVert #1\right\rVert}
\newcommand{\abs}[1]{\left\lvert #1\right\rvert}
\newcommand{\ot}{\otimes}
\newcommand{\1}{\mathbf 1}

\allowdisplaybreaks[2]

\hypersetup{
  pdftitle={Sharp Convex Concentration for Symmetric Random Tensors with Subgaussian Coordinates},
  pdfauthor={Xuanang Hu}
}

\title[Sharp Convex Concentration for Symmetric Random Tensors]{Sharp Convex Concentration for Symmetric Random Tensors with Subgaussian Coordinates}
\author{Xuanang Hu}
\address{Shandong University, Jinan, China.}
\email{xuananghu7@gmail.com}
\subjclass[2020]{60E15, 60B11}
\keywords{Random tensors, convex concentration, subgaussian variables, maximal coupling, product measures}
\date{}

\begin{document}

\begin{abstract}
Let $X=(X_1,\ldots,X_n)$ have independent coordinates with
mean zero, variance one, and $\norm{X_i}_{\psi_2}\le K$, and let
$H_d=(\mathbb R^n)^{\otimes_2 d}$. Let $L>0$ and let $f:H_d\to\mathbb R$ be convex and
$L$-Lipschitz. We prove that, for $0\le t\le c_KLn^{d/2}$,
\[
 \Pp\left\{
 \abs{f(X^{\otimes d})-\E f(X^{\otimes d})}>t
 \right\}
 \le C\exp\left[-c_K\mathcal I_{n,d}\left(
 \frac{t}{L n^{(d-1)/2}}
 \right)\right],
\]
where
\[
 \mathcal I_{n,d}(s)=
 \min\left\{
 \frac{s^2}{d^2},
 \frac{s^2}{d\log(e+nd/s^2)}
 \right\},\qquad s>0,
 \qquad \mathcal I_{n,d}(0)=0.
\]
The first rate is forced by changes in $\norm X$. The second comes from
changes of $X$ when its norm is nearly fixed. The proof constructs one
coupling that controls both the coordinatewise conditional displacement and
the mean squared Euclidean distance, and combines these bounds with a
second-order estimate for $x\mapsto x^{\otimes d}$. The rate is minimax sharp, scale by scale, even when the subgaussian norms
are bounded by an absolute constant. For bounded coordinates the logarithm in the second rate disappears.
\end{abstract}

\maketitle

\section{Introduction}

Concentration of measure on product spaces is one of the basic sources of
high-dimensional probability estimates; general accounts may be found in
\cite{Ledoux,BLM}. Two classical ideas are especially relevant here.
Talagrand introduced a convex distance for product measures and proved a
Gaussian-type enlargement inequality
\cite[Section~4.1, Theorem~4.1.1]{Talagrand}. One consequence is Gaussian
concentration for convex Lipschitz functions of independent bounded
coordinates; a convenient formulation is \cite[Theorem~1.1]{Vershynin}.
A second line of work relates relative entropy to the distance needed to couple
two probability measures. Marton's information-theoretic proof of the
blowing-up lemma \cite{Marton1986} and her later $\bar d$-distance inequality
\cite{Marton1996} made this connection explicit. Talagrand's
transportation-cost inequality for Gaussian and product measures is another
classical result in this direction \cite{TalagrandTransport}. Later work
formalized costs that depend on the conditional distribution of one coupled
point given the other; see, for example, \cite{GozlanRobertoSamsonTetali}.
That viewpoint is close to the conditional displacement used below. Our proof
uses an explicit maximal coupling: relative entropy is converted directly into
bounds on the coordinatewise conditional displacement and on the mean squared
Euclidean distance.

Random tensors introduce a second difficulty: the tensor map amplifies motion
by a factor that depends on the degree. For a simple random tensor
\[
 x_1\ot\cdots\ot x_d,
\]
the factors are independent. Vershynin proved convex and Euclidean
concentration estimates with the correct dependence on the dimension and the
degree \cite[Theorems~1.3 and~1.4]{Vershynin}. In the bounded convex case,
the natural variance scale for a simple tensor is $d n^{d-1}$; see
\cite[Theorem~1.3]{Vershynin}. In the symmetric model the same vector appears
in every slot, and the example $f(T)=\norm T$ gives
$f(X^{\otimes d})=\norm X^d$. Changes of $\norm X$ therefore force the
larger variance scale $d^2n^{d-1}$. Thus the symmetric problem is not obtained
by simply identifying the independent factors.

Vershynin singled out the symmetric tensor
\[
 X^{\otimes d}=X\ot\cdots\ot X
\]
as an open case and noted that a direct decoupling argument is expected to lose
factors exponential in $d$ \cite[Section~1.5]{Vershynin}. More recent work
has obtained sharp bounds for sums of simple tensors by empirical-process and
chaining methods \cite[Theorem~2.1]{AlGhattasChenSanzAlonso}; Abdalla and
Vershynin later gave a shorter proof of the corresponding dimension-free
estimate \cite[Corollary~1.2]{AbdallaVershynin}. Related results for
asymmetric simple tensors are developed in \cite[Theorem~2.1]{ChenSanzAlonso}.
These papers concern independent tensor factors or sums of rank-one tensors.
The present paper concerns one symmetric tensor, where the same random vector
occurs in every slot. Throughout this paper, ``symmetric tensor'' refers
specifically to this repeated-vector model. Related concentration estimates for
non-isotropic random tensors, motivated by learning and empirical-risk
minimization, were obtained by Even and Massouli\'e \cite{EvenMassoulie}.

Chen and Sanz-Alonso study Frobenius-norm concentration for sample moment
tensors and Hilbert-valued polynomial moments
under Gaussian convex domination \cite{ChenSanzAlonsoFrobenius}. Those
results concern empirical moment tensors or polynomial structure. Here we
study arbitrary convex Lipschitz functionals of one repeated-vector tensor.

A related product-space result is due to Huang and Tikhomirov. For independent
subgaussian coordinates they proved the sharp dimension-dependent
concentration inequality for convex $1$-Lipschitz functions
\cite[Theorem~1.3]{HuangTikhomirov}, and they proved that its logarithmic factor
is necessary for the full subgaussian class
\cite[Proposition~1.4]{HuangTikhomirov}. The function
$x\mapsto f(x^{\otimes d})$ is generally not convex, so their theorem cannot be
applied directly to the symmetric tensor problem. Nevertheless, the same
product-space scale
\[
 u\log(e+n/u)
\]
reappears in the coupling estimate used below. The same distinction appears in our result: the logarithmic factor remains
for general subgaussian coordinates, while the bounded-coordinate theorem below has
no such factor.

The main geometric point is visible from the derivative of
\[
 \Phi_d(x)=x^{\otimes d}.
\]
For $h\in\R^n$, with the second term below omitted when $d=1$,
\begin{equation}\label{eq:intro-differential}
 \norm{D\Phi_d(x)h}^2
 =d\norm{x}^{2d-2}\norm{h}^2
 +d(d-1)\norm{x}^{2d-4}\ip{x}{h}^2.
\end{equation}
If $h$ is parallel to $x$, then
\[
 \norm{D\Phi_d(x)h}=d\norm{x}^{d-1}\norm h.
\]
If $h\perp x$, then
\[
 \norm{D\Phi_d(x)h}=\sqrt d\,\norm{x}^{d-1}\norm h.
\]
Thus one cannot use the larger factor $d$ for every change of $x$. A sharp
argument has to separate changes of the norm from changes that keep the norm
fixed, up to a quadratic error. At the typical radius $\norm X\asymp\sqrt n$,
this predicts the two scales
\begin{equation}\label{eq:intro-scale}
 n^{(d-1)/2}
 \left(d\sqrt u+\sqrt{d u\log(e+n/u)}\right).
\end{equation}

The probabilistic part of the proof supplies exactly the two quantities needed
for this separation. Let $\Pp$ be a product measure, let $Q\ll \Pp$, and put
$H=D(Q\Vert \Pp)$. If $H=0$, take $X=Y$. If $H>0$, we construct a coupling
$(X,Y)$ with $X\sim \Pp$ and $Y\sim Q$ such that
\begin{align}
 \sum_{i=1}^n
 \E\left[\E(\abs{X_i-Y_i}\mid X)^2\right]
 &\le C_KH\log\left(e+\frac nH\right),
 \label{eq:intro-weak-cost}\\
 \E\norm{X-Y}^2
 &\le C_K(\sqrt{nH}+H).
 \label{eq:intro-strong-cost}
\end{align}
The analogous first estimate also holds after conditioning on $Y$. The first
quantity measures the coordinatewise conditional displacement. The second is
the usual mean squared Euclidean distance. Both estimates must hold for the
same coupling because both enter one deterministic tensor inequality.

We next estimate the tensor map itself. When $x$ and the points in the support
of a probability measure $\nu$ have the same norm $r$, we prove
\begin{align*}
 \norm{x^{\otimes d}-\int y^{\otimes d}\,d\nu(y)}
 &\le \sqrt d\,r^{d-1}
 \left(\sum_i\left[\int\abs{x_i-y_i}\,d\nu(y)\right]^2\right)^{1/2}\\
 &\quad+Cd r^{d-2}\int\norm{x-y}^2\,d\nu(y).
\end{align*}
The first term comes from the linear part of the change while the norm is fixed;
the second is a quadratic remainder. Since a product subgaussian vector does
not have deterministic norm, we then allow
$R_-\le\norm x,\norm y\le R_+$, put $\Delta=R_+-R_-$, and obtain
\begin{align*}
 \norm{x^{\otimes d}-\int y^{\otimes d}\,d\nu(y)}
 &\le C\sqrt d\,R_+^{d-1}
 \left(\sum_i\left[\int\abs{x_i-y_i}\,d\nu(y)\right]^2\right)^{1/2}\\
 &\quad+CdR_+^{d-2}
 \left(R_+\Delta+\int\norm{x-y}^2\,d\nu(y)\right).
\end{align*}
A standard concentration estimate for $\norm X^2$ makes $R_+-R_-$ small with
probability $1-e^{-cu}$.

The proof of concentration then compares two level sets of
$Z=f(X^{\otimes d})$. Let $M$ be a median, keep only points for which
$\norm X$ is close to $\sqrt n$, and consider a lower set and an upper set
separated by a value gap $t$. If the upper set had probability larger than
$e^{-cu}$, the two conditional laws would have relative entropies of order
$1$ and $u$. The bounds \eqref{eq:intro-weak-cost}--\eqref{eq:intro-strong-cost}
would then make the two laws close in both quantities appearing in the tensor
estimate. On the other hand, convexity gives
\[
 f\left(\int y^{\otimes d}\,d\nu_x(y)\right)\le M,
\]
where $\nu_x$ is the conditional law of the lower point given the upper point
$x$, while $f(x^{\otimes d})\ge M+t$. The $L$-Lipschitz property therefore
forces
\[
 t\le L\norm{x^{\otimes d}-\int y^{\otimes d}\,d\nu_x(y)}.
\]
The deterministic upper bound contradicts this inequality when $t$ is a large
constant multiple of \eqref{eq:intro-scale}. This proves the main tail bound.

Both terms in \eqref{eq:intro-scale} are necessary. For the first term we use
$X$ Gaussian and $f(T)=\norm T$, so that $f(X^{\otimes d})=\norm X^d$. For the
second we construct a subgaussian distribution with rare large coordinates and
a convex Lipschitz functional that detects those coordinates. These examples
give matching lower bounds over a subgaussian class with an absolute subgaussian bound.

Euclidean functionals have additional algebraic structure. Their squares are
polynomials, so polynomial and chaos methods can use cancellations that are
not visible in the general coupling argument. We treat this direction
separately.

\section{Main results}

Throughout the paper, $n,d\in\mathbb N$ with $d\ge1$. The symbols $c,C>0$
denote absolute constants, while $c_K,C_K>0$ denote constants that depend only
on $K$; their values may change from one occurrence to the next. Unless a
subscript is displayed, $\norm{\cdot}$ denotes the Hilbert norm in the ambient
space; on $\R^n$ it is the Euclidean norm. For $p\ge1$, write
\[
 \norm Z_{L_p}=(\E\abs Z^p)^{1/p}.
\]
For $\alpha\in\{1,2\}$, define
\[
 \norm{Z}_{\psi_\alpha}
 =\inf\left\{a>0:\E\exp(\abs Z^\alpha/a^\alpha)\le2\right\}.
\]
Let $H_d=(\R^n)^{\otimes_2 d}$ be the Hilbert tensor product.

\begin{theorem}\label{thm:main}
Let $X_1,\ldots,X_n$ be independent. Assume that
\[
 \E X_i=0,\qquad \E X_i^2=1,\qquad \norm{X_i}_{\psi_2}\le K.
\]
Let $L>0$, let $f:H_d\to\R$ be convex and $L$-Lipschitz, and put
$Z=f(X^{\otimes d})$. There are constants $c_K,C_K>0$ such that
\begin{equation}\label{eq:main-tail}
 \Pp\left\{
 \abs{Z-\E Z}>
 C_KL n^{(d-1)/2}
 \left(d\sqrt u+\sqrt{d u\log\left(e+\frac nu\right)}\right)
 \right\}
 \le C e^{-c_Ku}
\end{equation}
whenever
\[
 1\le u\le c_K\frac{n}{d^2}.
\]
The corresponding moment estimate is the following. If $d\le c_K\sqrt n$, then
\begin{equation}\label{eq:main-moment}
 \norm{Z-\E Z}_{L_p}
 \le C_KL n^{(d-1)/2}
 \left(d\sqrt p+\sqrt{d p\log\left(e+\frac np\right)}\right)
\end{equation}
for
\[
 1\le p\le c_K\frac{n}{d^2}.
\]
\end{theorem}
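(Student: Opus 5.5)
We follow the route sketched in the introduction, which has three ingredients.

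\textbf{Step 1 (coupling).} For a product measure $\Pp$ and $Q\ll\Pp$ with $H=D(Q\Vert\Pp)>0$, we construct a coupling $(X,Y)$ with $X\sim\Pp$, $Y\sim Q$ satisfying \eqref{eq:intro-weak-cost} and \eqref{eq:intro-strong-cost}. We also need the symmetric version of \eqref{eq:intro-weak-cost} obtained by conditioning on $Y$. The construction is coordinate by coordinate, in Marton style. Take the maximal coupling of $Q_i(\cdot\mid y_{<i})$ with $\Pp_i$, and let $H_i$ be the conditional relative entropies, so that $\sum_i\E H_i=H$ by the chain rule.

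For the weak cost, note that on the event $\{X_i\ne Y_i\}$, which has probability at most the total variation $\le\sqrt{2H_i}$, the displacement $\abs{X_i-Y_i}$ is controlled by subgaussian tails. The Donsker--Varadhan variational formula applied to $e^{\lambda X_i^2}$ gives
\[
 \E(\abs{X_i-Y_i}\mid\cdot)\lesssim_K \delta_i\sqrt{\log(e+1/\delta_i)},
\]
where $\delta_i$ is the total variation at coordinate $i$. Squaring gives $H_i\log(e+1/H_i)$. Summing and using concavity of $u\mapsto u\log(e+n/u)$ (Jensen over coordinates) yields $C_KH\log(e+n/H)$.

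For the strong cost, we bound $\E\norm{X-Y}^2$ by $\sum_i\E\bigl[\1_{X_i\neq Y_i}(X_i^2+Y_i^2)\bigr]$. We control this with the entropy inequality
\[
 \E_Q g\le D+\log\E_\Pp e^{g},
\]
applied to $g=\lambda\sum_i (X_i^2-1)$. The fluctuations of $\norm X^2$ are of order $\sqrt n$ (Bernstein, $\psi_1$), which produces the $\sqrt{nH}+H$ bound.

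\textbf{Step 2 (deterministic tensor bound).} We prove the two displayed estimates for $\norm{x^{\otimes d}-\int y^{\otimes d}d\nu}$. Write
\[
 y^{\otimes d}-x^{\otimes d}=D\Phi_d(x)(y-x)+\text{remainder}.
\]
The remainder is bounded by $Cd^2r^{d-2}\norm{x-y}^2$ in crude form. To get the factor $d$ rather than $d^2$, we use the exact identity on the sphere: $\ip{x}{y-x}=-\tfrac12\norm{x-y}^2$ when $\norm x=\norm y$. Then the parallel part of the linear term, carrying the factor $d$ via \eqref{eq:intro-differential}, is itself quadratic. The orthogonal part is bounded coordinatewise by $\sqrt d\,r^{d-1}\norm{\int(y-x)\,d\nu}$, and we dominate it by the $\ell_2$ norm of $\bigl(\int\abs{x_i-y_i}\,d\nu\bigr)_i$. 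For the higher-order terms we expand
\[
 y^{\otimes d}-x^{\otimes d}=\sum_k x^{\otimes(k-1)}\ot(y-x)\ot y^{\otimes(d-k)},
\]
and telescope once more. I expect this remainder control to be the main obstacle: a naive Taylor expansion loses $d^2$, or even $(1+\norm{x-y}/r)^d$. We will therefore use $\norm{x-y}^2\le c\,r^2/d$, which holds in our regime because $u\le c_Kn/d^2$, so that $(1+\norm{x-y}/r)^d=O(1)$. The near-sphere version follows by radially rescaling both points to radius $R_+$ and paying $dR_+^{d-1}\Delta$.

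\textbf{Step 3 (level sets).} Restrict to the good event $G=\{\abs{\norm X^2-n}\le C_K\sqrt{nu}\}$, which has probability $\ge1-e^{-cu}$. On $G$ we have $\Delta\lesssim\sqrt u$. Let $M$ be a median, $A=\{Z\le M\}\cap G$ and $B=\{Z\ge M+t\}\cap G$, and suppose $\Pp(B)>e^{-cu}$. Then $\Pp(\cdot\mid A)$ and $\Pp(\cdot\mid B)$ have relative entropy $\lesssim1$ and $\lesssim u$ with respect to $\Pp$. Couple each to $\Pp$ via Step 1 and compose; for the composition we need the version of \eqref{eq:intro-weak-cost} conditioned on the other endpoint. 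This gives a coupling $(x,y)$ with $x\in B$, $y\in A$ such that
\[
 \E\sum_i\E(\abs{x_i-y_i}\mid x)^2\lesssim u\log(e+n/u),\qquad \E\norm{x-y}^2\lesssim\sqrt{nu}.
\]
Let $\nu_x$ be the conditional law of $y$ given $x$. By convexity and Lipschitz continuity,
\[
 t\le L\norm{x^{\otimes d}-\int y^{\otimes d}\,d\nu_x}.
\]
Take $x$ where the conditional quantities are at most a constant times their means, which is possible by Markov's inequality. Step 2 with $R_+\approx\sqrt n$ then gives
\[
 t\lesssim L\Bigl(\sqrt d\,n^{(d-1)/2}\sqrt{u\log(e+n/u)}+d\,n^{(d-2)/2}(\sqrt n\sqrt u+\sqrt{nu})\Bigr),
\]
which is a contradiction once $t$ is $C_K$ times the scale. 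The symmetric argument with $A'=\{Z\ge M\}$ and $B'=\{Z\le M-t\}$ gives the lower tail. Finally, replace the median $M$ by $\E Z$ by integrating the tail. The moment bound \eqref{eq:main-moment} follows by integrating \eqref{eq:main-tail} up to $u\approx p$. The range beyond $c_Kn/d^2$ is absorbed using the trivial bound $\norm{Z-M}\le L\norm{X}^d$, whose moments are controlled when $d\le c_K\sqrt n$.
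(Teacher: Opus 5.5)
Your architecture is the paper's: a coordinatewise maximal coupling, gluing the two conditional laws through one copy of $\Pp$, the parallel/orthogonal split on the sphere, and median level sets plus convexity. However, two load-bearing steps are not established by the arguments you give.

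\textbf{The coupling costs.} What enters the tensor bound is $\E[\E(\abs{X_i-Y_i}\mid X)^2]$, the mean square of the displacement conditioned on all of $X$, including $X_i$. You bound $\E(\abs{X_i-Y_i}\mid Y_{<i})\lesssim_K\delta_i\sqrt{\log(e+1/\delta_i)}$ and square it. That controls only the square of a mean, which by Jensen is the \emph{smaller} quantity. For example, let $\mu_i$ be Gaussian and let $\theta_i=d\nu_i/d\mu_i$ vanish on a tail set of mass $\delta_i$. Then $H_i\asymp\delta_i$ and the squared mean is $\asymp\delta_i^2\log(1/\delta_i)$, but the needed quantity is $\asymp\delta_i\log(1/\delta_i)$.

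In the maximal coupling the conditional displacement given $X_i=x$ is at most $(1-\theta_i(x))_+(\abs x+m_s)$, so you need an $L^2$ bound on $(1-\theta_i)_+$. Total variation, $\int(1-\theta_i)_+\,d\mu_i=\delta_i\le\sqrt{2H_i}$, gives only $\sqrt{H_i}$-type bounds. After summing, these give $\E\norm{q}_2^2$ of order $\sqrt{nu}$, which destroys the term $\sqrt{du\log(e+n/u)}$.

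The missing idea is the pointwise inequality $(1-t)_+^2\le2\Phi(t)$, hence $\int(1-\theta)_+^2\,d\mu\le2D(\nu\Vert\mu)$. For the reverse conditioning needed in the gluing step, which you assert without argument, you also need $\int(\theta-1)_+^2\theta^{-1}\,d\mu\lesssim D$ with an $L\log L$ companion. These are combined with the variational inequality for $\psi=x^2/K^2$, and with the observation that conditioning on all of $X$ rather than $(X_i,Y_{<i})$ only decreases the second moment.

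Your strong-cost argument has the same defect. Applying the entropy inequality to $\lambda\sum_i(X_i^2-1)$ bounds $\E_Q\norm Y^2-n$, which does not see the indicators $\1_{\{X_i\ne Y_i\}}$ at all. What works is the coordinatewise bound $\int x^2(1-\theta_i)_+\,d\mu_i\le(\int x^4\,d\mu_i)^{1/2}(\int(1-\theta_i)_+^2\,d\mu_i)^{1/2}\lesssim K^2\sqrt{h_i}$, a similar bound on the excess side, and then $\sum_i\E\sqrt{h_i}\le\sqrt{nH}$. Total variation here would cost an extra logarithm in the $d\sqrt u$ term.

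\textbf{The tensor remainder.} You make the remainder $O(dr^{d-2}\norm{x-y}^2)$ by assuming $\norm{x-y}^2\le cr^2/d$, justified by $u\le c_Kn/d^2$. That holds only on average, since the coupling gives $\E\norm{X-Y}^2\lesssim\sqrt{nu}$. But the remainder bound is used inside $\int\cdot\,d\nu_x(y)$, so it is needed for every $y$ charged by $\nu_x$. Since $x$ and $y$ are only known to lie in the shell, $\nu_x$ can charge points with $\norm{x-y}\asymp r$.

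The paper needs no smallness. It Taylor expands along the great circle from $x$ to $y$, where $\gamma'\perp\gamma$ makes the $\binom d2$ cross terms of $F''$ mutually orthogonal. Hence $\norm{F''}\le Cdr^{d-2}$, and the remainder is at most $Cdr^{d-2}\norm{x-y}^2$ for every $y$. Your expansion can also be rescued by a case split. When $\norm{x-y}^2>cr^2/d$, the crude bound $2r^d+\sqrt d\,r^{d-1}\norm{x-y}+\tfrac d2r^{d-2}\norm{x-y}^2$, obtained from the parallel/orthogonal split of the linear term, is already $\le C_cdr^{d-2}\norm{x-y}^2$.

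A smaller slip: the crude bound used beyond $u\approx c_Kn/d^2$ should read $\abs{Z-M}\le L\norm X^d+\abs{f(0)-M}$, with $\abs{f(0)-M}\lesssim Ln^{d/2}$.
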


We also state the result in a form valid for every degree. For $s>0$, define
\begin{equation}\label{eq:rate-function}
 \mathcal I_{n,d}(s)
 =\min\left\{
 \frac{s^2}{d^2},
 \frac{s^2}{d\log(e+nd/s^2)}
 \right\},
 \qquad
 \mathcal I_{n,d}(0)=0.
\end{equation}

\begin{theorem}\label{thm:full-range}
Under the assumptions of \Cref{thm:main}, for every $d\ge1$ and
\begin{equation}\label{eq:natural-range}
 0\le t\le c_KLn^{d/2},
\end{equation}
we have
\begin{equation}\label{eq:full-range-tail}
 \Pp\left\{\abs{Z-\E Z}>t\right\}
 \le C\exp\left[-c_K\mathcal I_{n,d}\left(
 \frac{t}{Ln^{(d-1)/2}}
 \right)\right].
\end{equation}
Matching minimax lower bounds under a fixed absolute subgaussian bound are
stated in \Cref{cor:rate-sharpness}.
\end{theorem}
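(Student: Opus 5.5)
Theorem~\ref{thm:full-range} will be derived from \Cref{thm:main} by inverting the rate, together with a separate argument at the edge of the range where \Cref{thm:main} no longer applies. Write $s=t/(Ln^{(d-1)/2})$. By \eqref{eq:natural-range}, $0\le s\le c_K\sqrt n$. If $\mathcal I_{n,d}(s)\le C_0$ for a constant $C_0$, the bound is trivial once $C\ge e^{c_KC_0}$, since the right-hand side then exceeds $1$. So we may assume $\mathcal I_{n,d}(s)$ is large.

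\textbf{Choosing $u$ from $s$.} Set $u=c\,\mathcal I_{n,d}(s)$ with a small constant $c$. We check that $C_K\bigl(d\sqrt u+\sqrt{du\log(e+n/u)}\bigr)\le s$.

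For the first term, $u\le c s^2/d^2$ gives $d\sqrt u\le \sqrt c\, s$.

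For the second term there are two cases.
\begin{itemize}
\item If the minimum in \eqref{eq:rate-function} is the logarithmic one, then $u=cs^2/(d\log(e+nd/s^2))$. We need $\log(e+n/u)\lesssim\log(e+nd/s^2)$. Now $n/u=nd\log(e+nd/s^2)/(cs^2)$, and $\log(e+ab)\le\log(e+a)+\log(e+b)$ together with $\log\log\le\log$ gives the required comparison. Hence $du\log(e+n/u)\lesssim c\,s^2$.
\item If the minimum is $s^2/d^2$, then $s^2/d^2\le s^2/(d\log(e+nd/s^2))$, that is, $\log(e+nd/s^2)\le d$. We need $du\log(e+n/u)\lesssim s^2$ with $u=cs^2/d^2$, which reduces to $\log(e+nd^2/(cs^2))\lesssim d$. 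This follows from $\log(e+nd/s^2)\le d$ plus $\log d\le d$.
\end{itemize}
Shrinking $c$ in terms of $C_K$, \eqref{eq:main-tail} yields $\Pp\{|Z-\E Z|>t\}\le Ce^{-c_Ku}$, which is the claim.

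\textbf{The range condition $1\le u\le c_K n/d^2$.} The lower bound $u\ge1$ is the reduction above. The upper bound holds since $u\le cs^2/d^2\le c\,c_K^2 n/d^2$. This is exactly where the restriction $t\le c_KLn^{d/2}$ in \eqref{eq:natural-range} enters. For large $d$ (say $d^2>c_Kn$) the admissible range of $u$ in \Cref{thm:main} may be empty. In that case $s^2/d^2\le c_K^2 n/d^2\lesssim 1$, so $\mathcal I_{n,d}(s)$ is bounded and we are again in the trivial regime.

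\textbf{Main obstacle.} I expect the main difficulty to be the bookkeeping between the two branches of the minimum, in particular the logarithm comparison $\log(e+n/u)$ versus $\log(e+nd/s^2)$ across the crossover regime. Constants there must be absorbed uniformly in $d$; I would handle this with the elementary inequality $\log(e+xy)\le 2\log(e+x)+\log(e+y)$.
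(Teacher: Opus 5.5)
Your proposal is correct and takes essentially the paper's route. The paper also disposes of the regime where $\mathcal I_{n,d}(s)$ is below a $K$-dependent threshold, by shrinking $c_K$ so that the absolute choice $C=e$ makes the bound trivial; that is the right reading of your condition $C\ge e^{c_KC_0}$, since $C_0$ depends on $K$. It then feeds some $u\asymp_K\mathcal I_{n,d}(s)$ with $1\le u\le c_Kn/d^2$ into \eqref{eq:main-tail}. The only difference is packaging: the paper rescales to $s_0=s/(2C_{K,0})$ and invokes \Cref{lem:rate-inversion}, whose lower-bound proof is exactly your two-case computation with $u=c\min\{s^2/d^2,\,s^2/(d\log(e+nd/s^2))\}$. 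You carry out that computation inline instead.
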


The following theorem states the matching lower bounds.

\begin{theorem}\label{thm:lower}
There are absolute constants $K_0,c,C>0$ and an integer $n_0\ge1$ with the
following property. Let $n\ge n_0$ and let $p$ be an integer such that
\[
 1\le p\le c\frac{n}{d^2}.
\]
For each of the following two inequalities, there exist (possibly different)
independent centered, variance-one coordinates with $\psi_2$ norm at most
$K_0$ and a convex one-Lipschitz function $f:H_d\to\R$ such that:
\begin{align}
 \Pp\left\{
 \abs{f(X^{\otimes d})-m}
 \ge c d n^{(d-1)/2}\sqrt p
 \right\}
 &\ge e^{-Cp},
 \label{eq:lower-radial}\\
 \Pp\left\{
 \abs{f(X^{\otimes d})-m}
 \ge c n^{(d-1)/2}
 \sqrt{d p\log\left(e+\frac np\right)}
 \right\}
 &\ge e^{-Cp}
 \label{eq:lower-second}
\end{align}
for every $m\in\R$. Thus the right side of \eqref{eq:main-moment} is sharp up to constants even
under a fixed absolute subgaussian bound.
\end{theorem}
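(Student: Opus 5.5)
The plan is to treat the two inequalities separately, each with its own explicit distribution and its own convex functional. Both arguments use the same elementary reduction. If a random variable $W$ satisfies $\Pp\{W\ge \beta\}\ge e^{-Cp}$ and $\Pp\{W\le \alpha\}\ge e^{-Cp}$ with $\beta-\alpha\ge 2\gamma$, then for every $m\in\R$ one of these two events lies inside $\{\abs{W-m}\ge\gamma\}$. So it suffices to exhibit two separated levels, each reached with probability at least $e^{-Cp}$.

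\emph{Radial bound \eqref{eq:lower-radial}.} I will take $X$ standard Gaussian, whose coordinates have absolute $\psi_2$ norm, and $f(T)=\norm T$. Then $f$ is convex and $1$-Lipschitz, and $f(X^{\otimes d})=\norm X^d$.
\begin{enumerate}
\item Put $\delta=\sqrt{p/n}$. I will use the standard two-sided lower deviation bounds for $\chi^2_n$: for $n\ge n_0$ and $1\le p\le cn$, both $\Pp\{\norm X^2\ge n(1+c_1\delta)\}$ and $\Pp\{\norm X^2\le n(1-c_1\delta)\}$ are at least $e^{-Cp}$. These follow from a CLT/Cram\'er lower bound, or directly from the explicit $\chi^2$ density.
\item The hypothesis $p\le cn/d^2$ gives $d\delta\le\sqrt c$, which is small.
\item Hence $(1\pm c_1\delta)^{d/2}=1\pm\frac{c_1}{2}d\delta+O(d^2\delta^2)$. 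Both error terms are at most $\frac{c_1}{4}d\delta$.
\item Therefore the two events place $\norm X^d$ above $n^{d/2}(1+\frac{c_1}4 d\delta)$ and below $n^{d/2}(1-\frac{c_1}4d\delta)$ respectively. The gap between these levels is of order $d n^{(d-1)/2}\sqrt p$, and the reduction above gives \eqref{eq:lower-radial}.
\end{enumerate}

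\emph{Logarithmic bound \eqref{eq:lower-second}.} I will use coordinates with rare large values.
\begin{enumerate}
\item Set $q=p/n$ and $a=\sqrt{\log(e+n/p)}$. Let $X_i=\varepsilon_i\xi_i$, where $\varepsilon_i$ is a Rademacher sign and $\xi_i=a$ with probability $q$ and $\xi_i=b$ otherwise. Choose $b\in[1/2,1]$ so that $\E X_i^2=1$; this is possible since $qa^2\le \frac pn\log(e+n/p)\le 1/2$ for $n\ge n_0$.
\item The coordinates have absolute $\psi_2$ norm, because $\Pp\{\abs{X_i}\ge a\}=q\lesssim e^{-a^2}$ up to an absolute factor.
\item Let $N$ be the number of large coordinates, so $N\sim\mathrm{Bin}(n,q)$ with mean $p$. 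Standard binomial bounds give $\Pp\{N\ge 4p\}\ge e^{-Cp}$ and $\Pp\{N\le p\}\ge c\ge e^{-Cp}$.
\item The source of the factor $\sqrt d$ is the projection $P_S$ onto the span of basis tensors $e_{i_1}\ot\cdots\ot e_{i_d}$ with at least one index in $S\subset[n]$. A direct computation gives
\[
 \norm{P_S x^{\otimes d}}^2=\norm x^{2d}-\norm{x_{S^c}}^{2d}.
\]
\item I will take
\[
 f(T)=\max_{\abs S=4p}\norm{P_ST}.
\]
This is a maximum of convex $1$-Lipschitz functions, so it is convex and $1$-Lipschitz.
\item On the event $\norm X^2\in[n/2,2n]$, which has probability $1-e^{-cn}$, write $\norm{x_{S^c}}^{2d}=\norm x^{2d}(1-\theta)^d$ with $\theta=\norm{x_S}^2/\norm x^2$. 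The elementary bounds $c\min(1,d\theta)\le 1-(1-\theta)^d\le d\theta$ then give
\[
 f(X^{\otimes d})\asymp \sqrt d\,n^{(d-1)/2}\max_{\abs S=4p}\norm{X_S},
\]
provided $d\norm{X_S}^2\lesssim n$. This holds because $d\norm{X_S}^2\le 4dpa^2$ and $p\le cn/d^2$.
\end{enumerate}

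The delicate point is exactly this last side condition when $d$ is small and $\log(e+n/p)$ is large. There I will either use the saturated branch $\min(1,d\theta)$ directly, or shrink $c$; I will check which of the two is needed.

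Given this comparison, the two levels separate as follows. On $\{N\ge4p\}$ the best $S$ captures $4p$ large coordinates, so $\max\norm{X_S}\ge 2a\sqrt p$. On $\{N\le p\}$ we have $\max\norm{X_S}^2\le pa^2+4pb^2$, so $\max\norm{X_S}\le a\sqrt p(1+o(1))$ once $a$ exceeds an absolute constant. If instead $a$ is bounded, the logarithmic bound is already implied by \eqref{eq:lower-radial}, with a separate simple example. The gap between the levels is then $c\sqrt d\,n^{(d-1)/2}\sqrt{p\log(e+n/p)}$.

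\emph{Main obstacle.} The hardest step will be making the constants in the comparison $\norm{P_SX^{\otimes d}}\asymp\sqrt d\,n^{(d-1)/2}\norm{X_S}$ uniform enough that the upper and lower levels genuinely separate. Crude bounds $c\sqrt d$ and $C\sqrt d$ are not enough, since the separation must beat the ratio $C/c$. I will handle this by enlarging the $N$-threshold from $4p$ to $Kp$ with $K$ large and absolute. The binomial lower tail still gives $\Pp\{N\ge Kp\}\ge e^{-C_Kp}$, so the constant $C$ in $e^{-Cp}$ stays absolute.
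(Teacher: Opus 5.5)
Your radial argument is the paper's: Gaussian $X$, $f(T)=\norm T$, and two-sided deviations of $\norm X$ at scale $\sqrt p$. The paper obtains these uniformly for $p\le cn$ from the explicit density of $\norm G$, which is the route to use rather than a CLT.

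For the logarithmic term you use the same kind of law as the paper but a different functional, and the comparison step fails as written. On $\{\norm X^2\in[n/2,2n]\}$, replacing $\norm X^{d-1}$ by $n^{(d-1)/2}$ costs a factor anywhere in $[2^{-(d-1)/2},2^{(d-1)/2}]$, and $d$ can be as large as $\sqrt{cn}$. So $f\asymp\sqrt d\,n^{(d-1)/2}\max_S\norm{X_S}$ does not hold with absolute constants. No absolute $K$ can beat a ratio of order $2^{d-1}$, so your proposed remedy targets the wrong loss. Moreover, raising only the $N$-threshold while keeping $\abs S=4p$ changes nothing, since $\max_{\abs S=4p}\norm{X_S}^2\le4pa^2$ always.

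The step is repaired by using the structure of your law instead of a norm window.
\begin{itemize}
\item Since $pa^2+(n-p)b^2=n$, one has exactly $\norm X^2=n+(N-p)(a^2-b^2)$. Hence $\norm X^2\le n$ on $\{N\le p\}$, and $\abs{X_i}\ge b$ always.
\item With $s=\norm{x_S}^2$ and $t=\norm{x_{S^c}}^2$, your identity reads $\norm{P_Sx^{\otimes d}}^2=(s+t)^d-t^d\in[dst^{d-1},\,ds(s+t)^{d-1}]$.
\item On $\{N\le p\}$, every $\abs S=4p$ gives $f^2\le d(pa^2+3p)n^{d-1}$.
\item On $\{N\ge4p\}$, a set of $4p$ large coordinates gives
\[
 f^2\ge4dpa^2\big((n-4p)b^2\big)^{d-1}\ge(1-\varepsilon)\,4dpa^2\,n^{d-1}.
\]
This uses $1-b^2\le2pa^2/n$ and $dpa^2/n\le(c/d)\log(e+d^2/c)\le Cc\log(e/c)$.
\end{itemize}
So $K=4$ already suffices. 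Since $a^2=\log(e+n/p)\ge\log(e+d^2/c)$ is automatically large, $2a\sqrt{1-\varepsilon}-\sqrt{a^2+3}\ge ca$. The two levels are therefore separated by $c\,n^{(d-1)/2}\sqrt{dp\log(e+n/p)}$. In particular, your bounded-$a$ case never occurs, and the side condition is handled simply by shrinking $c$.

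For comparison, the paper projects onto tensors with exactly one slot in $V_S$. This gives the product formula $\norm{P_Sx^{\otimes d}}=\sqrt d\,\norm{x_S}\norm{x_{S^c}}^{d-1}$. It then compares the deterministic value on $\{N=0\}$ with the value on $\{N\ge p/2\}$, for $\abs S=p$. Both levels carry the same factor $b^{d-1}(n-p)^{(d-1)/2}$, and the choice $a^2=16\log(en/p)$ makes their ratio $a/(\sqrt2\,b)$ large. So it needs neither an expansion of $1-(1-\theta)^d$ nor control of $\norm X^{d-1}$. Smallness of $dp\log(en/p)/n$ is used only to show that the common factor is at least $c\,n^{(d-1)/2}$. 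With the monotonicity above, your functional works equally well.
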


Here and below, minimax sharpness is understood scale by scale: for each
admissible deviation level, the marginal law and the convex Lipschitz
functional may depend on that level. A single extremizing example is not
asserted to work simultaneously at all scales.

\begin{corollary}\label{cor:rate-sharpness}
There are absolute constants $K_0,c,C>0$ such that the following holds.
Let $0<s\le c\sqrt n$ and assume
$\mathcal I_{n,d}(s)\ge1$. Then there are independent centered,
variance-one coordinates with $\psi_2$ norm at most $K_0$ and a convex
one-Lipschitz function $f$ such that, for every $m\in\R$,
\begin{equation}\label{eq:rate-sharpness}
 \Pp\left\{
 \abs{f(X^{\otimes d})-m}
 \ge c n^{(d-1)/2}s
 \right\}
 \ge\exp\big(-C\mathcal I_{n,d}(s)\big).
\end{equation}
\end{corollary}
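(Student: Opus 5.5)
We derive \Cref{cor:rate-sharpness} from \Cref{thm:lower} by choosing, for the given level $s$, an integer $p$ comparable to $\mathcal I_{n,d}(s)$. We then use whichever of \eqref{eq:lower-radial} or \eqref{eq:lower-second} realizes the minimum in \eqref{eq:rate-function}. Throughout, write $g(p)=d\sqrt p+\sqrt{dp\log(e+n/p)}$. The inverse relation between $g$ and $\mathcal I_{n,d}$ is the heart of the argument. If $\mathcal I_{n,d}(s)=s^2/d^2$, we expect $g(p)\asymp d\sqrt p$. If $\mathcal I_{n,d}(s)=s^2/(d\log(e+nd/s^2))$, we expect the second term of $g(p)$ to dominate.

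First step. Put $I=\mathcal I_{n,d}(s)\ge1$ and $p=\lceil A I\rceil$ for a large absolute constant $A$. We must check that $1\le p\le c n/d^2$. Since $I\le s^2/d^2\le c^2 n/d^2$, we get $p\le 2Ac^2n/d^2$, which is admissible once the corollary's constant $c$ is small compared with the constant of \Cref{thm:lower}. If $n<n_0$, the range $I\ge1$ together with $s\le c\sqrt n$ forces $d^2\le c^2 n<c^2n_0$. This finite case can be absorbed by shrinking $c$ so that the hypothesis becomes vacuous, since $c^2 n/d^2\ge I\ge1$ then fails for $n<n_0$ when $c^2n_0<1$.

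Second step: show $g(p)\gtrsim s$. If the minimum in $I$ is $s^2/d^2$, then $d\sqrt p\ge d\sqrt{AI}=\sqrt A\,s$. Otherwise $I=s^2/(d\ell)$ with $\ell=\log(e+nd/s^2)$, and
\[
 dp\log(e+n/p)\ge A\frac{s^2}{\ell}\log\left(e+\frac n{2AI}\right).
\]
Here $n/I=nd\ell/s^2$. The main technical point is that $\log(e+nd\ell/(2As^2))\gtrsim \ell/\log(2A)$. This follows from $\log(e+xy)\ge\log(e+x)$ for $y\ge1$ and $\log(e+x/B)\ge\log(e+x)/(1+\log B)$. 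Choosing $A$ large then gives $g(p)\ge s$ up to an absolute factor.

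Third step. Apply the member of \Cref{thm:lower} whose deviation term dominates in $g(p)$, using \eqref{eq:lower-radial} if $d\sqrt p$ dominates and \eqref{eq:lower-second} otherwise. This yields coordinates and a convex one-Lipschitz $f$ with
\[
 \Pp\{\abs{f(X^{\otimes d})-m}\ge c' n^{(d-1)/2}g(p)/2\}\ge e^{-Cp}
\]
for every $m$. Since $g(p)\gtrsim s$ and $p\le 2AI$, this gives \eqref{eq:rate-sharpness} after adjusting constants.

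The delicate step is the logarithm comparison in the second case. Replacing $nd/s^2$ by $n/p$ changes the argument of the logarithm by the factor $\ell/(2A)$, which can be smaller than one. The elementary inequality above handles this loss, and I expect it to be the only place where care is needed.
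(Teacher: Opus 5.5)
Your proof is correct and follows the same strategy as the paper. Both choose an integer $p\lesssim\mathcal I_{n,d}(s)$ with $d\sqrt p+\sqrt{dp\log(e+n/p)}\gtrsim s$, then apply whichever lower bound, \eqref{eq:lower-radial} or \eqref{eq:lower-second}, matches the dominant term. The only difference is how $p$ is found: the paper inverts $G$ exactly through $u_*=\sup\{u\le n/d^2:G(u)\le s\}$ and \Cref{lem:rate-inversion}, then rounds using $G(\lambda u)\ge\sqrt\lambda\,G(u)$ for $\lambda\le1$, whereas you overshoot with $p=\lceil A\mathcal I_{n,d}(s)\rceil$ and check the needed inequality directly by your logarithm comparison, which avoids the supremum, the rounding step and the lemma.
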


\begin{proof}
Put
\[
 G(u)=d\sqrt u+\sqrt{du\log(e+n/u)},
 \qquad
 u_* =\sup\left\{0<u\le\frac n{d^2}:G(u)\le s\right\}.
\]
By \Cref{lem:rate-inversion},
\[
 u_*\asymp\mathcal I_{n,d}(s)\ge c.
\]
Since $G$ is increasing and continuous and
\[
 G(n/d^2)\ge\sqrt n>s
\]
after reducing $c$ in $s\le c\sqrt n$,
\[
 u_*<\frac n{d^2},\qquad G(u_*)=s.
\]
Set
\[
 p=\begin{cases}
 1,&u_*<2,\\
 \lfloor u_*\rfloor,&u_*\ge2.
 \end{cases}
\]
Then
\[
 p\asymp\mathcal I_{n,d}(s).
\]
If $p\le u_*$, then
\[
 G(p)\ge\sqrt{p/u_*}\,G(u_*)\ge cs.
\]
If $p>u_*$, then $G(p)\ge s$. Hence
\[
 \max\left\{d\sqrt p,
 \sqrt{dp\log(e+n/p)}\right\}\ge cs.
\]
Moreover,
\[
 p\le C\mathcal I_{n,d}(s)
 \le C\frac{s^2}{d^2}
 \le c\frac n{d^2}.
\]
Thus
\[
 d\frac pn\log\frac{en}{p}
 \le \frac{c}{d}\log\left(\frac{ed^2}{c}\right)
 \le Cc\log(e/c).
\]
Also, since $p\le cn$,
\[
 \log(en/p)\asymp\log(e+n/p).
\]
After reducing $c$, both lower-bound propositions are applicable. If
\[
 d\sqrt p\ge cs,
\]
use \Cref{prop:radial-lower}. Otherwise
\[
 \sqrt{dp\log(e+n/p)}\ge cs,
\]
and \Cref{prop:second-lower} applies. In either case,
\[
 \Pp\left\{
 \abs{f(X^{\otimes d})-m}\ge c n^{(d-1)/2}s
 \right\}
 \ge e^{-Cp}
 \ge\exp\big[-C\mathcal I_{n,d}(s)\big].
\]
\end{proof}

\begin{remark}
The theorem is uniform over all subgaussian marginal laws. A fixed law can
have a smaller scale. For example, a law with fixed $\norm X$ has no
fluctuation from changes in the norm. On the sphere, the natural variance scale
is $dn^{d-1}$.
\end{remark}

We shall use the following global median estimate in the proof. Put
\[
 h_n(u)=\sqrt{u\log\left(e+\frac nu\right)}.
\]

\begin{theorem}\label{thm:global}
Under the assumptions of \Cref{thm:main}, let $M$ be a median of $Z$.
There are constants $\eta_K,c_K,C_K>0$ such that the following statements hold.

If $1\le u\le \eta_Kn$, put
\[
 w_u=C_K(\sqrt{nu}+u),
 \qquad
 R_u=\sqrt{n+w_u}.
\]
Then
\begin{align}
 \Pp\Big\{
 \abs{Z-M}>C_KL\big[
 &\sqrt d\,R_u^{d-1}h_n(u)
 \notag\\
 &+dR_u^{d-2}(\sqrt{nu}+u)
 \big]\Big\}
 \le C e^{-c_Ku}.
 \label{eq:global-shell}
\end{align}
If $u\ge\max\{1,\eta_Kn\}$, put
\[
 \widetilde R_u=\sqrt n+C_K\sqrt u.
\]
Then
\begin{equation}\label{eq:global-ball}
 \Pp\left\{
 \abs{Z-M}>C_KLd\widetilde R_u^{d-1}\sqrt u
 \right\}
 \le C e^{-c_Ku}.
\end{equation}
\end{theorem}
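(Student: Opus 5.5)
We follow the level-set comparison outlined in the introduction. For the shell regime $1\le u\le\eta_Kn$, we use the standard concentration of $\norm X^2$ (Bernstein for $\sum(X_i^2-1)$, whose summands are $\psi_1$ with norm $\le C_K$). It gives $\abs{\norm X^2-n}\le w_u$ with probability at least $1-e^{-cu}$, so on the shell $S_u=\{\abs{\norm X^2-n}\le w_u\}$ we have $R_-\le\norm X\le R_u$ with $R_-=\sqrt{(n-w_u)_+}$. Since $w_u\le C_K\sqrt{nu}\ll n$ when $\eta_K$ is small, $\Delta=R_u-R_-\lesssim w_u/\sqrt n\lesssim\sqrt u+u/\sqrt n$, and therefore $R_u\Delta\lesssim\sqrt{nu}+u$.

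We prove the upper deviation $\Pp\{Z>M+t\}\le Ce^{-cu}$ for $t$ equal to the right side of \eqref{eq:global-shell}; the lower deviation is symmetric, with the roles of the two sets exchanged and the conditioned-on-$Y$ version of \eqref{eq:intro-weak-cost} used. Set $A=\{Z\le M\}\cap S_u$ and $B=\{Z\ge M+t\}\cap S_u$. Then $\Pp(A)\ge 1/2-e^{-cu}\ge1/4$. Suppose, for contradiction, that $\Pp(B)>e^{-cu}$. Let $P_A,P_B$ be $\Pp$ conditioned on $A$ and on $B$; their relative entropies with respect to $\Pp$ are $\log(1/\Pp(A))\le\log4$ and $\log(1/\Pp(B))\le cu$. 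Apply the coupling construction behind \eqref{eq:intro-weak-cost}--\eqref{eq:intro-strong-cost} to each law, giving $(X,Y_A)$ and $(X,Y_B)$ with common first marginal $\Pp$. Composing them (glue along $X$, then take the conditional law of $Y_A$ given $Y_B$) gives a coupling of $P_B$ and $P_A$. The triangle inequality, Jensen, and $(a+b)^2\le2a^2+2b^2$ then transfer the bounds: the conditional displacement is at most $C_K h_n(u)^2$ (here $H\log(e+n/H)$ is increasing in $H$, and the $O(1)$ entropy contributes $\lesssim\log(en)$, which is dominated once $u\ge1$... if not, we absorb it by enlarging $C_K$ using $u\ge1$), and the mean squared distance is at most $C_K(\sqrt{nu}+u)$. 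The conditioning in the weak cost must be on the $B$-point. Because the weak cost for $(X,Y_B)$ is stated conditioned on $X$ and we need it conditioned on $Y_B$, we use the paper's remark that the analogous estimate also holds after conditioning on $Y$.

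Now let $\nu_x$ be the conditional law of the $A$-point given the $B$-point $x$. Convexity and Jensen give $f(\int y^{\otimes d}d\nu_x)\le M$, while $f(x^{\otimes d})\ge M+t$. Hence, for every $x$ in $B$,
\[
t\le L\norm{x^{\otimes d}-\int y^{\otimes d}\,d\nu_x(y)}.
\]
All points involved lie in the shell, so the deterministic shell tensor estimate applies. Averaging over $x\sim P_B$, and using Cauchy--Schwarz for the square-root term, yields
\[
t\le CL\left[\sqrt d R_u^{d-1}C_Kh_n(u)+dR_u^{d-2}\bigl(R_u\Delta+C_K(\sqrt{nu}+u)\bigr)\right].
\]
Taking the constant in $t$ larger than this gives a contradiction.

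The ball regime $u\ge\eta_Kn$ is cruder. Subgaussian concentration of $\norm X$ gives $\norm X\le\widetilde R_u$ with probability at least $1-e^{-cu}$. On the ball of radius $R$ the map $x\mapsto x^{\otimes d}$ is $dR^{d-1}$-Lipschitz, and the coupling gives $\E\norm{X-Y}^2\lesssim\sqrt{nu}+u\lesssim u$. Running the same contradiction argument with the bound $\norm{x^{\otimes d}-\int y^{\otimes d}d\nu_x}\le dR^{d-1}\int\norm{x-y}d\nu_x$ gives \eqref{eq:global-ball}.

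The main obstacle is the composition step: both cost bounds must survive for a single coupling of $P_A$ with $P_B$, with the conditional-displacement bound holding with the correct conditioning. We will try to avoid gluing by constructing the maximal coupling of $P_B$ to $P_A$ directly, treating $P_A$ as a reference of bounded density ratio with respect to $\Pp$.
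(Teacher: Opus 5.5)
Your proposal is correct and is essentially the paper's proof. It uses the same shell and level-set comparison, with the upper point conditioned on and the lower point averaged by convexity. It glues the two product couplings through a common copy of $\Pp$, which is exactly \Cref{thm:two-set-coupling}: the $Y$-conditioned bound handles the $B$-pair, and the $X$-conditioned bound plus conditional independence handles the $A$-pair. It then applies \Cref{lem:nearby-norms} on the shell and averages.

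Your one deviation is in the ball regime. There you use $\E\norm{X-Y}\le(\E\norm{X-Y}^2)^{1/2}\lesssim_K\sqrt u$ with the crude $dR^{d-1}$-Lipschitz bound, instead of the paper's coordinatewise first-order bound with the conditional-displacement cost. This works equally well, since $\sqrt{nu}+u\lesssim_K u$ when $u\ge\eta_Kn$.

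Drop your closing idea of coupling $P_B$ to $P_A$ directly. Since $A$ and $B$ are disjoint, $P_B\not\ll P_A$, and any maximal coupling between them moves all the mass. The gluing you already described settles the issue.
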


\section{Couplings for product measures}
\label{sec:coupling}

This section builds the coupling used in the proof of the main theorem. We
first work in one dimension and then pass to product measures. We use
$a_+=\max(a,0)$ and $\log_+a=\max(\log a,0)$ for $a>0$, with
$\log_+0=0$. For probability measures $\nu\ll\mu$ we write
\[
 D(\nu\Vert\mu)=\int \log\left(\frac{d\nu}{d\mu}\right)d\nu
\]
for relative entropy.

We shall use the following elementary form of the entropy variational
inequality. If $\gamma\ll\mu$ and $\psi$ is measurable with
$\int e^\psi d\mu<\infty$, then
\begin{equation}\label{eq:entropy-variational}
 D(\gamma\Vert\mu)
 \ge \int\psi\,d\gamma-\log\int e^\psi\,d\mu.
\end{equation}
Indeed, set $d\mu_\psi=e^\psi d\mu/\int e^\psi d\mu$ and use
$D(\gamma\Vert\mu_\psi)\ge0$.

We start with a weighted moment estimate. It will be used when only a small
part of a subgaussian measure is moved by the coupling.

\begin{lemma}\label{lem:weighted-moment}
Let $\mu$ be a probability measure on $\R$. Assume that
\[
 \int e^{x^2/K^2}\,d\mu(x)\le2.
\]
Let $g\ge0$, and put
\[
 a=\int g\,d\mu,\qquad
 b=\int g\log_+g\,d\mu.
\]
If $0<a\le1$, then
\begin{equation}\label{eq:weighted-moment}
 \int x^2g(x)\,d\mu(x)
 \le K^2\left[b+a\log\left(\frac{2}{a}\right)\right].
\end{equation}
Also,
\begin{equation}\label{eq:fourth-moment}
 \int x^4\,d\mu(x)\le C K^4.
\end{equation}
\end{lemma}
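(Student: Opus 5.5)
The plan is to apply the entropy variational inequality \eqref{eq:entropy-variational} with $\psi(x)=x^2/K^2$ and a suitably normalized probability measure built from $g$. Since $a=\int g\,d\mu\in(0,1]$, define $\gamma$ by $d\gamma=(g/a)\,d\mu$. This is a probability measure absolutely continuous with respect to $\mu$. Taking $\psi(x)=x^2/K^2$, the hypothesis gives $\int e^{\psi}\,d\mu\le 2$, so \eqref{eq:entropy-variational} yields
\[
 \frac1{K^2}\int x^2\,d\gamma\le D(\gamma\Vert\mu)+\log 2 .
\]
Multiplying by $aK^2$ gives $\int x^2 g\,d\mu\le K^2\bigl[a D(\gamma\Vert\mu)+a\log 2\bigr]$. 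If $\int g\log_+ g\,d\mu=\infty$ there is nothing to prove. Otherwise the entropy is finite, or at least bounded above as computed next.

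The next step computes the entropy:
\[
 aD(\gamma\Vert\mu)=\int g\log(g/a)\,d\mu=\int g\log g\,d\mu+a\log(1/a).
\]
Since $g\log g\le g\log_+ g$ pointwise, this is at most $b+a\log(1/a)$. Combining the two bounds,
\[
 \int x^2g\,d\mu\le K^2\bigl[b+a\log(1/a)+a\log 2\bigr]=K^2\bigl[b+a\log(2/a)\bigr],
\]
which is \eqref{eq:weighted-moment}.

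The one step needing care is integrability. The integral $\int g\log g\,d\mu$ may have a negative part where $g<1$. That part is bounded, because $g\log g\ge -1/e$ and $\mu$ is a probability measure, so the manipulation is legitimate whenever $b<\infty$. The entropy variational inequality itself holds with values in $(-\infty,\infty]$, so no finiteness of $\int x^2 g$ is needed in advance.

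For \eqref{eq:fourth-moment}, use the elementary inequality $y^2/2\le e^{y}$ for $y\ge0$ with $y=x^2/K^2$. This gives $x^4\le 2K^4e^{x^2/K^2}$, and integrating yields $\int x^4\,d\mu\le 4K^4$. I do not expect any real obstacle. The only point needing care is the sign and finiteness bookkeeping in the entropy identity.
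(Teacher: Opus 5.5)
Your proposal is correct and follows the paper's proof essentially verbatim. You use the same tilted measure $d\gamma=(g/a)\,d\mu$, the variational inequality with $\psi(x)=x^2/K^2$, and the identity $aD(\gamma\Vert\mu)=\int g\log g\,d\mu+a\log(1/a)$. The only differences are minor: for \eqref{eq:fourth-moment} you use the pointwise bound $x^4\le 2K^4e^{x^2/K^2}$, which gives the explicit constant $4$, instead of integrating the tail estimate, and your integrability remarks spell out details the paper leaves implicit.
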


\begin{proof}
Let $\gamma$ be the probability measure with density $g/a$ with respect to
$\mu$. Take \(\psi(x)=x^2/K^2\) in \eqref{eq:entropy-variational}. Since
\(\int e^{x^2/K^2}\,d\mu\le2\), we obtain
\[
 D(\gamma\Vert\mu)
 \ge \frac1{K^2}\int x^2\,d\gamma(x)-\log2.
\]
On the other hand,
\[
 D(\gamma\Vert\mu)
 =\frac1a\int g\log g\,d\mu-\log a
 \le\frac ba+\log(1/a).
\]
Multiplying by $aK^2$ proves \eqref{eq:weighted-moment}. The bound
\eqref{eq:fourth-moment} follows by integrating the tail estimate
$\mu\{\abs X>t\}\le2e^{-t^2/K^2}$.
\end{proof}

The next lemma converts the pointwise entropy density into the quantities
that occur in the maximal coupling. Let
\[
 \Phi(t)=t\log t-t+1,\qquad t\ge0,
\]
with $0\log0=0$.

\begin{lemma}\label{lem:entropy-calculus}
For $t\ge0$, put
\[
 r=(1-t)_+,
 \qquad s=(t-1)_+,
 \qquad w=\frac{s^2}{t},
 \qquad \eta=\frac{s}{t},
\]
where $w=\eta=0$ at $t=0$. Then
\begin{align}
 r^2+w+\eta^2&\le C\Phi(t),
 \label{eq:entropy-calculus-1}\\
 w\log_+w+s\log_+s&\le C\Phi(t).
 \label{eq:entropy-calculus-2}
\end{align}
\end{lemma}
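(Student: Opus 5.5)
We prove the two inequalities by a case split at $t=2$, with one more split at $t=1/2$ for the second, using only the elementary behaviour of $\Phi(t)=t\log t-t+1$. The facts we rely on are that $\Phi$ is convex, nonnegative, and vanishes only at $t=1$. Near $t=1$ we have $\Phi(t)=\tfrac12(t-1)^2+O(\abs{t-1}^3)$. In particular, on $[0,2]$ there is an absolute $c>0$ with $\Phi(t)\ge c(t-1)^2$. This holds because $\Phi''(t)=1/t\ge 1/2$ there, and $\Phi(1)=\Phi'(1)=0$. For $t\ge2$ we have $\Phi(t)\ge c\,t\log t$, since $t\log t-t+1\ge \tfrac12 t\log t$ once $\log t\ge 2-2/t$. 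That last inequality is true for $t\ge2$ but tight-ish near $t=2$, so we adjust the constant: at $t=2$, $\Phi(2)=2\log2-1>0$, and the ratio $\Phi(t)/(t\log t)$ is continuous and positive on $[2,\infty)$ and tends to $1$. Hence it is bounded below by some absolute $c>0$.

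For \eqref{eq:entropy-calculus-1}, the three terms are treated separately.

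\begin{itemize}
\item If $t\le1$, then $s=w=\eta=0$ and $r^2=(1-t)^2\le C\Phi(t)$ by the quadratic lower bound on $[0,2]$.
\item If $1<t\le2$, then $r=0$ and $w=s^2/t\le (t-1)^2$, and $\eta^2\le (t-1)^2$. Both are at most $C\Phi(t)$.
\item If $t\ge2$, then $w\le t$ and $\eta\le1$, while $\Phi(t)\ge c\,t\log t\ge c\,t\log 2\ge c'\max\{t,1\}$.
\end{itemize}

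This gives \eqref{eq:entropy-calculus-1}.

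For \eqref{eq:entropy-calculus-2}, note that for $t\le 1$ both sides of the left-hand side vanish. For $t>1$ we have $w=s^2/t\le s$ and $s=t-1<t$. Since $x\mapsto x\log_+x$ is nondecreasing, the left side is at most $2t\log_+ t$. If $t\ge2$, this is $\le C\Phi(t)$ by the bound $\Phi(t)\ge c\,t\log t$. If $1<t<2$, then $s<1$, so $s\log_+s=0$, and also $w<1$, so $w\log_+w=0$. The left side is therefore zero.

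There is no real obstacle. The only point needing care is fixing the absolute constants in the two lower bounds for $\Phi$, the quadratic one on $[0,2]$ and the $t\log t$ one on $[2,\infty)$. Both follow from the convexity and continuity of $\Phi$, as above.
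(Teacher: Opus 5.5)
Your proof is correct and follows the paper's argument: the same splits at $t=1$ and $t=2$, the bounds $\Phi(t)\gtrsim(t-1)^2$ on $[0,2]$ and $\Phi(t)\gtrsim t\log t$ on $[2,\infty)$, and the observation that the $\log_+$ terms vanish for $1<t<2$. One small correction: the sufficient condition $\log t\ge 2-2/t$ is false on all of $[2,4]$ (already $\log 2<1$). The bound $\Phi(t)\ge c\,t\log t$ on $[2,\infty)$ therefore rests only on your continuity-and-limit argument, which is valid. Also, the announced split at $t=1/2$ is never used.
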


\begin{proof}
If $0\le t\le1$, then $s=w=\eta=0$ and
$\Phi(t)\ge(1-t)^2/2$, so the first estimate follows and the second is
trivial. If $1\le t\le2$, then
$\Phi(t)\asymp(t-1)^2$, while each nonzero term in
\eqref{eq:entropy-calculus-1} is at most a constant times $(t-1)^2$; also
$0\le s,w\le1$, so the logarithmic terms vanish. If $t\ge2$, then
$w\le t$, $\eta\le1$, and $s\le t$, whereas
$\Phi(t)\asymp t\log t$. These bounds give both inequalities.
\end{proof}

We now construct the one-dimensional coupling. If
$\theta=d\nu/d\mu$, then $(1-\theta)_+d\mu$ is the part of $\mu$ that is missing from
$\nu$, while $(\theta-1)_+d\mu$ is the same amount of excess mass in $\nu$. The
maximal coupling leaves the common part fixed and moves only these two
remaining parts.

\begin{theorem}\label{thm:one-dimensional}
Let $\mu$ be a probability measure on $\R$ with
$\int e^{x^2/K^2}\,d\mu\le2$. Let $\nu\ll\mu$, and assume
\[
 h=D(\nu\Vert\mu)<\infty.
\]
There is a coupling $(U,V)$ of $\mu$ and $\nu$ such that
\begin{align}
 \E\left[\E(\abs{U-V}\mid U)^2\right]
 &\le C K^2h\log\left(e+\frac1h\right),
 \label{eq:one-weak-forward}\\
 \E\left[\E(\abs{U-V}\mid V)^2\right]
 &\le C K^2h\log\left(e+\frac1h\right),
 \label{eq:one-weak-reverse}\\
 \E\abs{U-V}^2
 &\le C K^2(\sqrt h+h).
 \label{eq:one-strong}
\end{align}
The right sides are interpreted as zero when $h=0$.
\end{theorem}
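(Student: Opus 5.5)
We use the maximal coupling described before the statement. Put $\theta=d\nu/d\mu$ and write $r(x)=(1-\theta(x))_+$, $s(x)=(\theta(x)-1)_+$. Then $\int r\,d\mu=\int s\,d\mu=:\delta$, and $h=\int\Phi(\theta)\,d\mu$. If $\delta=0$, set $U=V$. Otherwise the coupling is as follows. With probability $\int\min(1,\theta)\,d\mu=1-\delta$, draw $U=V$ from the common part $\min(1,\theta)\,d\mu$, normalized. With the remaining probability $\delta$, draw $U$ and $V$ independently, $U\sim r\,d\mu/\delta$ and $V\sim s\,d\mu/\delta$. The marginals are $\mu$ and $\nu$. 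The conditional laws are then explicit:
\[
 \E(\abs{U-V}\mid U=x)=\frac{r(x)}{\delta}\int\abs{x-y}\,s(y)\,d\mu(y),
\]
since the moving mass at $x$ is the fraction $r(x)/\min(1,\theta)+r(x)=r(x)$, using $\min(1,\theta)+r=1$. Similarly,
\[
 \E(\abs{U-V}\mid V=y)=\frac{s(y)}{\theta(y)\delta}\int\abs{x-y}\,r(x)\,d\mu(x)=\frac{\eta(y)}{\delta}\int\abs{x-y}\,r\,d\mu,
\]
with $\eta=s/\theta$ as in \Cref{lem:entropy-calculus}.

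\textbf{Strong bound.} We have $\E\abs{U-V}^2\le 2\int x^2 r\,d\mu+2\int y^2 s\,d\mu$. For the first term, Cauchy--Schwarz, \eqref{eq:fourth-moment} and \eqref{eq:entropy-calculus-1} give $\int x^2r\,d\mu\le (CK^4)^{1/2}(\int r^2d\mu)^{1/2}\le CK^2\sqrt h$. For the second, apply \Cref{lem:weighted-moment} with $g=s$. Here $a=\delta$ and $b=\int s\log_+s\,d\mu\le Ch$ by \eqref{eq:entropy-calculus-2}. This yields $K^2(Ch+\delta\log(2/\delta))$. Since $\delta\le(\int r^2)^{1/2}\le C\sqrt h$ and $\delta\le 1$, we get $\delta\log(2/\delta)\le C\sqrt h\log(e+1/h)$. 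That loses a logarithm, so instead bound $\delta\log(2/\delta)$ against $\sqrt h$ more carefully. By Pinsker, $\delta\le\sqrt{h/2}$, but the logarithm remains. The remedy is to split $s=s\1_{\theta\le2}+s\1_{\theta>2}$. On $\{\theta\le2\}$ we have $s\le1$, and Cauchy--Schwarz with the fourth moment gives $CK^2(\int s^2\1_{\theta\le 2})^{1/2}\le CK^2\sqrt h$, since $s^2\le C\Phi(\theta)$ there. On $\{\theta>2\}$, the mass $a'=\int s\1_{\theta>2}\,d\mu\le Ch$ because $\Phi(\theta)\gtrsim \theta$ there. Then \Cref{lem:weighted-moment} gives $K^2(Ch+a'\log(2/a'))$, and $a'\log(2/a')\le C\sqrt{a'}\le C\sqrt h$. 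This proves \eqref{eq:one-strong}.

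\textbf{Weak bounds.} For \eqref{eq:one-weak-forward}, use $\abs{x-y}\le\abs x+\abs y$:
\[
 \E(\abs{U-V}\mid U=x)\le r(x)\left(\abs x+m\right),\qquad m=\frac1\delta\int\abs y\,s\,d\mu .
\]
Squaring and integrating gives at most $2\int x^2r^2\,d\mu+2m^2\int r^2\,d\mu$. Since $r\le1$, the first term is at most $\int x^2 r\,d\mu$. That is only $\sqrt h$ by the argument above, which is too weak. Instead apply \Cref{lem:weighted-moment} with $g=r^2$. Here $a=\int r^2\le Ch$ and $b=0$ since $r^2\le 1$, so the term is $\le CK^2h\log(2/h)$; when $h\gtrsim1$ use the trivial $CK^2$. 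For the second term, Jensen gives $m^2\le\delta^{-1}\int y^2s\,d\mu$. Applying \Cref{lem:weighted-moment} to $g=s$ (split as above) gives $\int y^2 s\,d\mu\le CK^2(h+\delta\log(2/\delta))$. Then
\[
 m^2\int r^2\le CK^2\,\frac{h}{\delta}\left(h+\delta\log\frac2\delta\right).
\]
The danger is the factor $h^2/\delta$ when $\delta\ll h$. To handle it, avoid dividing by $\delta$ directly and bound $m\lesssim K\sqrt{\log(2/\delta)+b/\delta}$. Note $b/\delta$ can be large only when the mass sits at large $\theta$, where $r$ is small. Equivalently, split $r\le\1_{\theta\le 1/2}+\ldots$ and use $\delta\ge\int r\ge c\int r^2$, so $\int r^2/\delta\le C$. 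This gives
\[
 m^2\int r^2\le CK^2\left(\frac{\int r^2}{\delta}\,b+\int r^2\log\frac2\delta\right)\le CK^2\left(h+h\log\frac 2\delta\right).
\]
The bound $\log(2/\delta)\lesssim\log(e+1/h)$ is not automatic either, since $\delta$ could be tiny. But $x\mapsto x\log(2/x)$ is increasing, and $\int r^2\le\delta$, so $\int r^2\log(2/\delta)\le\int r^2\log(2/\int r^2)$. Together with $\int r^2\le Ch$, this is $\le Ch\log(e+1/h)$.

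The reverse bound \eqref{eq:one-weak-reverse} is symmetric. Use $\eta^2\le C\Phi(\theta)$ and the weight $g=\eta^2\theta=s^2/\theta=w$, whose entropy term $\int w\log_+w$ is controlled by \eqref{eq:entropy-calculus-2}; \Cref{lem:entropy-calculus} was designed for exactly this. The key step there is $\int\eta^2 y^2\,d\nu=\int w\,y^2\,d\mu$.

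The main obstacle is obtaining $h\log(e+1/h)$ rather than $\sqrt h$ in the weak bounds. This requires tracking that conditional displacements are weighted by $r^2$ (respectively $w$), whose total mass is $O(h)$, and converting the small-$\delta$ logarithm into $\log(1/h)$ by the monotonicity of $x\log(2/x)$.
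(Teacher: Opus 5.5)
Your argument is correct and is essentially the paper's proof: the same maximal coupling, and the weighted-moment lemma with weights $r^2$ and $s^2/\theta$ for the two conditional bounds. Your observation $\int r^2\,d\mu\le\delta$, with its reverse analogue $\int s^2/\theta\,d\mu\le\delta$, absorbs the $h/\delta$ and $\log(1/\delta)$ factors in one step where the paper splits into the cases $\delta\ge h$ and $\delta<h$. For the strong bound you split $s$ along $\{\theta\le2\}$ and $\{\theta>2\}$ instead of using the paper's identity $s=s^2/\theta+s/\theta$; both routes give $CK^2(\sqrt h+h)$.
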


\begin{proof}
Let $\theta=d\nu/d\mu$ and put
\[
 r=(1-\theta)_+,
 \qquad s=(\theta-1)_+,
 \qquad
 \delta=\int r\,d\mu=\int s\,d\mu.
\]
The equality of the two integrals follows from $\int \theta\,d\mu=1$. If
$\delta=0$, then $\mu=\nu$ and we take $U=V$. Assume $\delta>0$. Let $\delta_x$ denote the probability measure concentrated at the single point $x$. Define
\begin{equation}\label{eq:maximal-coupling}
 \pi(dx,dy)
 =\min(1,\theta(x))\mu(dx)\delta_x(dy)
 +\frac1\delta r(x)\mu(dx)s(y)\mu(dy).
\end{equation}
Since $\min(1,\theta)+r=1$ and $\min(1,\theta)+s=\theta$, the two marginals of
$\pi$ are $\mu$ and $\nu$.

It is useful to normalize the two parts that are moved:
\[
 d\sigma_r=\frac r\delta\,d\mu,
 \qquad
 d\sigma_s=\frac s\delta\,d\mu.
\]
Thus the common mass is kept at $U=V$, while on the remaining mass of size
$\delta$ the variables have laws $\sigma_r$ and $\sigma_s$.

Set
\[
 R_2=\int r^2\,d\mu,
 \qquad
 W=\int \frac{s^2}{\theta}\,d\mu,
\]
where $s^2/\theta=0$ on $\{\theta=0\}$.
Since
\[
 h=\int\Phi(\theta)\,d\mu,
\]
integrating the pointwise estimates in \Cref{lem:entropy-calculus} gives
\begin{equation}\label{eq:R2-W}
 R_2+W\le Ch,
 \qquad
 R_2\le\delta,
 \qquad
 W\le\delta.
\end{equation}

We first prove \eqref{eq:one-weak-forward}. Given $U=x$, formula
\eqref{eq:maximal-coupling} says that $V=x$ on the common part and otherwise
$V$ has law $\sigma_s$. Put
\[
 m_s=\int\abs y\,d\sigma_s(y)
 =\frac1\delta\int\abs y\,s(y)\,d\mu(y).
\]
Hence
\[
 \E(\abs{U-V}\mid U=x)
 \le r(x)(\abs x+m_s),
\]
and therefore
\begin{equation}\label{eq:forward-split}
 \E\left[\E(\abs{U-V}\mid U)^2\right]
 \le2\int x^2r(x)^2\,d\mu(x)+2m_s^2R_2.
\end{equation}
Apply \Cref{lem:weighted-moment} with $g=r^2$. Since $0\le r\le1$,
its logarithmic term is zero, and
\[
 \int x^2r(x)^2\,d\mu(x)
 \le C K^2R_2\log(2/R_2)
 \le C K^2h\log(e+1/h).
\]
Next,
\[
 D(\sigma_s\Vert\mu)
 =\frac1\delta\int s\log s\,d\mu+\log(1/\delta)
 \le \frac{Ch}{\delta}+\log(1/\delta)
\]
because \(\int s\log_+s\,d\mu\le Ch\). Taking
\(\psi(x)=x^2/K^2\) in the variational inequality gives
\[
 m_s^2\le \int y^2\,d\sigma_s(y)
 \le CK^2\left(1+\frac h\delta+\log(1/\delta)\right).
\]
Together with $R_2\le\min(Ch,\delta)$,
\[
 m_s^2R_2
 \le CK^2
 \begin{cases}
 h(1+\log(1/h)),&\delta\ge h,\\
 h+\delta\log(e/\delta),&\delta<h,
 \end{cases}
 \le CK^2h\log(e+1/h).
\]
This proves \eqref{eq:one-weak-forward}.

For \eqref{eq:one-weak-reverse}, put
\[
 m_r=\int\abs x\,d\sigma_r(x).
\]
For $\nu$-almost every $y$, the conditional distribution of $U$ given
$V=y$ has a point mass at $y$ and otherwise uses $\sigma_r$. More precisely,
\[
 \E(\abs{U-V}\mid V=y)
 \le \frac{s(y)}{\theta(y)}(\abs y+m_r).
\]
Consequently,
\[
 \E\left[\E(\abs{U-V}\mid V)^2\right]
 \le2\int y^2\frac{s(y)^2}{\theta(y)}\,d\mu(y)+2m_r^2W.
\]
Apply \Cref{lem:weighted-moment} with \(g=s^2/\theta\). Here
\[
 \int\frac{s^2}{\theta}\,d\mu\le Ch,\qquad
 \int\frac{s^2}{\theta}\log_+\left(\frac{s^2}{\theta}\right)d\mu\le Ch,
\]
by \Cref{lem:entropy-calculus}. Therefore
\[
 \int y^2\frac{s(y)^2}{\theta(y)}\,d\mu(y)
 \le CK^2h\log(e+1/h).
\]
Also,
\[
 D(\sigma_r\Vert\mu)
 =\frac1\delta\int r\log r\,d\mu+\log(1/\delta)
 \le\log(1/\delta),
\]
so the variational inequality with \(\psi(x)=x^2/K^2\) gives
\[
 m_r^2\le\int x^2\,d\sigma_r(x)
 \le CK^2(1+\log(1/\delta)).
\]
Since $W\le\min(Ch,\delta)$, we distinguish two cases. If $\delta\ge h$,
then $W\le Ch$ and $\log(e/\delta)\le\log(e/h)$. If $\delta<h$, then
\[
 \delta\log(e/\delta)
 \le
 \begin{cases}
 h\log(e/h),&0<h\le1,\\
 1\le h,&h>1.
 \end{cases}
\]
Hence, in both cases,
\[
 m_r^2W\le CK^2h\log(e+1/h).
\]
This proves \eqref{eq:one-weak-reverse}.

Finally, the common part of the coupling has $U=V$. On the remaining mass,
$U$ and $V$ have laws $\sigma_r$ and $\sigma_s$, so
\begin{equation}\label{eq:strong-residual}
 \E\abs{U-V}^2
 \le2\int x^2r(x)\,d\mu(x)
 +2\int x^2s(x)\,d\mu(x).
\end{equation}
By Cauchy--Schwarz,
\begin{align*}
 \int x^2r(x)\,d\mu(x)
 &\le\left(\int x^4\,d\mu\right)^{1/2}
       \left(\int r^2\,d\mu\right)^{1/2}\\
 &\le CK^2\sqrt h.
\end{align*}
On $\{\theta>1\}$ we have the identity
\[
 s=\frac{s^2}{\theta}+\frac{s}{\theta}.
\]
For the $s/\theta$ part, Cauchy--Schwarz gives
\[
 \int x^2\frac{s}{\theta}\,d\mu
 \le\left(\int x^4\,d\mu\right)^{1/2}
      \left(\int\frac{s^2}{\theta^2}\,d\mu\right)^{1/2}
 \le CK^2\sqrt h.
\]
For the $s^2/\theta$ part, \Cref{lem:weighted-moment},
\eqref{eq:R2-W}, and \eqref{eq:entropy-calculus-2} give
\[
 \int x^2\frac{s^2}{\theta}\,d\mu\le CK^2(\sqrt h+h).
\]
Hence
\[
 \int x^2s(x)\,d\mu(x)
 \le CK^2(\sqrt h+h).
\]
Substituting the bounds for
\(\int x^2r\,d\mu\) and \(\int x^2s\,d\mu\) into
\eqref{eq:strong-residual} gives
\[
 \E|U-V|^2\le CK^2(\sqrt h+h),
\]
which is the third assertion.
\end{proof}

\medskip
\noindent\textit{From one dimension to product measures.}
Let
\[
 \Pp=\mu_1\ot\cdots\ot\mu_n,
\]
where every \(\mu_i\) has \(\psi_2\)-norm at most \(K\). For
\(x=(x_1,\ldots,x_n)\), write
\[
 x_{<i}=(x_1,\ldots,x_{i-1}),\qquad
 x_{\le i}=(x_1,\ldots,x_i),\qquad
 x_{>i}=(x_{i+1},\ldots,x_n).
\]
We use the same notation for random vectors. Thus, for example,
\(Y_{<i}=(Y_1,\ldots,Y_{i-1})\).

We shall generate pairs
\[
 (X_1,Y_1),\ldots,(X_n,Y_n)
\]
in this order. At step \(i\), the value of \(Y_{<i}\) is already known. The
conditional law used for \((X_i,Y_i)\) may depend on \(Y_{<i}\), but its first
marginal is always \(\mu_i\). Formally, the construction can be realized from
independent auxiliary random variables \(\xi_1,\ldots,\xi_n\), with
\((X_i,Y_i)=F_i(Y_{<i},\xi_i)\) for suitable measurable maps \(F_i\). The
next elementary fact records the conditioning identities that we need.

\begin{lemma}
\label{lem:full-conditioning}
Suppose that \(\xi_1,\ldots,\xi_n\) are independent and that the pairs
$(X_i,Y_i)$ are generated successively in the form
\[
 (X_i,Y_i)=F_i(Y_{<i},\xi_i),
\]
where, for every admissible value of $Y_{<i}$, the first marginal of the
resulting conditional law is $\mu_i$. Put
\[
 Z_i=\abs{X_i-Y_i},\qquad
 g_i=\E(Z_i\mid X_i,Y_{<i}),\qquad
 \widetilde g_i=\E(Z_i\mid Y_{\le i}).
\]
Then
\begin{align}
 \E(Z_i\mid X)&=\E(g_i\mid X),
 \label{eq:full-conditioning-X}\\
 \E(Z_i\mid Y)&=\widetilde g_i.
 \label{eq:full-conditioning-Y}
\end{align}
Consequently,
\begin{align}
 \E\big[\E(Z_i\mid X)^2\big]&\le\E g_i^2,
 \label{eq:full-conditioning-X-square}\\
 \E\big[\E(Z_i\mid Y)^2\big]&=\E\widetilde g_i^2.
 \label{eq:full-conditioning-Y-square}
\end{align}
\end{lemma}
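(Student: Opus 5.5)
The whole argument rests on conditional independence structure read off from the generating scheme $(X_j,Y_j)=F_j(Y_{<j},\xi_j)$. The key structural fact we will establish is that $X_{<i}$ is a function of $(\xi_1,\ldots,\xi_{i-1})$ only. Since $X_j$ depends on $(Y_{<j},\xi_j)$ and $Y_{<j}$ depends on $\xi_{<j}$, the pair $(X_{\le i-1},Y_{\le i-1})$ is a function of $\xi_{\le i-1}$.

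\textbf{Identity \eqref{eq:full-conditioning-X}.} By the tower property,
\[
 \E(Z_i\mid X)=\E\bigl(\E(Z_i\mid X,Y_{<i})\mid X\bigr).
\]
It therefore suffices to show that $\E(Z_i\mid X,Y_{<i})=\E(Z_i\mid X_i,Y_{<i})=g_i$. Write $X=(X_{<i},X_i,X_{>i})$.

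First we remove the future coordinates $X_{>i}$. We claim $X_{>i}$ is conditionally independent of $(X_i,Y_i)$ given $(X_{\le i},Y_{\le i})$, and in fact given less. By the inductive form, $X_{i+1}$ has conditional law $\mu_{i+1}$ given $\xi_{\le i}$, whatever the value of $Y_{\le i}$. Inductively, $X_{>i}$ is independent of $\xi_{\le i}$ with law $\mu_{i+1}\ot\cdots\ot\mu_n$. This is the crucial use of the hypothesis that the first marginal is always $\mu_j$ regardless of $Y_{<j}$. Since $(X_{\le i},Y_{\le i})$ is $\xi_{\le i}$-measurable, $X_{>i}$ is independent of $(X_{\le i},Y_{\le i})$, so conditioning on $X_{>i}$ can be dropped: $\E(Z_i\mid X,Y_{<i})=\E(Z_i\mid X_{\le i},Y_{<i})$.

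Next we remove the past coordinates $X_{<i}$. Given $\xi_{<i}$, the pair $(X_i,Y_i)=F_i(Y_{<i},\xi_i)$ depends on $\xi_{<i}$ only through $Y_{<i}$, and $\xi_i$ is independent of $\xi_{<i}$. Hence $(X_i,Y_i)$ is conditionally independent of $X_{<i}$ given $Y_{<i}$, which gives $\E(Z_i\mid X_{\le i},Y_{<i})=g_i$.

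\textbf{Identity \eqref{eq:full-conditioning-Y}.} We must show $\E(Z_i\mid Y)=\E(Z_i\mid Y_{\le i})$, that is, that $Y_{>i}$ carries no further information about $X_i$ given $Y_{\le i}$. This is the delicate step, because $Y_{>i}$ does depend on $Y_{\le i}$. However, $Y_{>i}$ is generated from $Y_{\le i}$ and $\xi_{>i}$ alone, and $\xi_{>i}$ is independent of $\xi_{\le i}$, which determines $(X_i,Y_{\le i})$. Therefore, conditionally on $Y_{\le i}$, $Y_{>i}$ is independent of $X_i$. This is a Markov-chain property: $X_i\to Y_{\le i}\to Y_{>i}$. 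Formally, for bounded $\phi$ we have $\E[\phi(Y_{>i})\mid \xi_{\le i}]=\Psi(Y_{\le i})$, and the identity follows.

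\textbf{Consequences.} Inequality \eqref{eq:full-conditioning-X-square} is conditional Jensen applied to \eqref{eq:full-conditioning-X}. Identity \eqref{eq:full-conditioning-Y-square} follows immediately from \eqref{eq:full-conditioning-Y}.

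The main obstacle is making the independence of $X_{>i}$ from $\xi_{\le i}$ rigorous. We will do this by induction on $j>i$, using the fact that the law of $X_j$ given $\xi_{<j}$ equals $\mu_j$ for every value of $Y_{<j}$.
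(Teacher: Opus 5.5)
Your proposal is correct and follows the same route as the paper. For the $X$-identity, you apply the tower property through $(X,Y_{<i})$, drop $X_{>i}$ using the fixed first marginals (made explicit by your inductive proof that $X_{>i}$ is independent of $\xi_{\le i}$), drop $X_{<i}$ using the freshness of $\xi_i$, and finish with conditional Jensen; for the $Y$-identity, you use the Markov structure $Y_{>i}=G(Y_{\le i},\xi_{>i})$. Your write-up is more explicit than the paper's sketch; the only blemish is the opening claim of conditional independence given $(X_{\le i},Y_{\le i})$, which is vacuous, but the unconditional independence of $X_{>i}$ from $\xi_{\le i}$ that you then prove is exactly what is needed.
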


\begin{proof}
Because the conditional first marginal at step $j$ is the fixed measure
$\mu_j$, the variable $X_j$ is independent of the sigma-field generated by all
previous steps. Iterating this observation shows that $X_1,\ldots,X_n$ are
independent with laws $\mu_1,\ldots,\mu_n$. Moreover, conditional on
$Y_{<i}$, the coordinates $X_{<i}$ and $X_{>i}$ provide no additional
information about the fresh choice $(X_i,Y_i)$ beyond $X_i$. Consequently,
\[
 \E(Z_i\mid X,Y_{<i})
 =\E(Z_i\mid X_i,Y_{<i})=g_i.
\]
Taking conditional expectation with respect to $X$ gives
\[
 \E(Z_i\mid X)=\E(g_i\mid X),
\]
and conditional Jensen yields
\[
 \E\big[\E(Z_i\mid X)^2\big]
 \le \E\big[\E(g_i^2\mid X)\big]
 =\E g_i^2.
\]
For the second identity, once $Y_{\le i}$ is fixed, the conditional law of the
later steps is generated from the fresh variables
$\xi_{i+1},\ldots,\xi_n$ and depends on the past only through the already fixed
$Y$-coordinates. Hence $Z_i$ and $Y_{>i}$ are conditionally independent given
$Y_{\le i}$, and therefore
\[
 \E(Z_i\mid Y)=\E(Z_i\mid Y_{\le i})=\widetilde g_i.
\]
Squaring and taking expectation gives the last assertion.
\end{proof}

For $h\ge0$, put
\[
 \Psi_n(h)=
 \begin{cases}
  h\log(e+n/h),&h>0,\\
  0,&h=0.
 \end{cases}
\]

\begin{theorem}\label{thm:product-coupling}
Let \(Q\ll \Pp\), and assume \(H=D(Q\Vert \Pp)<\infty\). There is a coupling \((X,Y)\)
with \(X\sim \Pp\) and \(Y\sim Q\) such that
\begin{align}
 \sum_{i=1}^n
 \E\left[\E(\abs{X_i-Y_i}\mid X)^2\right]
 &\le C K^2\Psi_n(H),
 \label{eq:product-weak-X}\\
 \sum_{i=1}^n
 \E\left[\E(\abs{X_i-Y_i}\mid Y)^2\right]
 &\le C K^2\Psi_n(H),
 \label{eq:product-weak-Y}\\
 \E\norm{X-Y}^2
 &\le C K^2(\sqrt{nH}+H).
 \label{eq:product-strong}
\end{align}
\end{theorem}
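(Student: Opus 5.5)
The plan is to build the coupling sequentially, as in \Cref{lem:full-conditioning}, using the one-dimensional maximal coupling of \Cref{thm:one-dimensional} at each step. We apply the chain rule of relative entropy in the order of the coordinates. Write $Q_i(\cdot\mid y_{<i})$ for the conditional law of the $i$th coordinate under $Q$ given the first $i-1$ coordinates equal $y_{<i}$. Put
\[
 h_i(y_{<i})=D\big(Q_i(\cdot\mid y_{<i})\Vert\mu_i\big).
\]
Then, with $Y\sim Q$,
\[
 H=\sum_{i=1}^n\E h_i(Y_{<i}).
\]
At step $i$, given $Y_{<i}=y_{<i}$, draw $(X_i,Y_i)$ from the coupling of \Cref{thm:one-dimensional} with $\mu=\mu_i$ and $\nu=Q_i(\cdot\mid y_{<i})$, using fresh randomness $\xi_i$. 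Measurable selection is unproblematic because the maximal coupling \eqref{eq:maximal-coupling} is given by an explicit formula in the density. The first marginal is always $\mu_i$, so \Cref{lem:full-conditioning} shows that $X\sim\Pp$. By construction $Y\sim Q$.

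For \eqref{eq:product-weak-X}, \Cref{lem:full-conditioning} reduces the left side to $\sum_i\E g_i^2$, where $g_i=\E(Z_i\mid X_i,Y_{<i})$. Conditioning on $Y_{<i}$ and applying \eqref{eq:one-weak-forward} gives
\[
 \E[g_i^2\mid Y_{<i}]\le CK^2\,h_i\log(e+1/h_i).
\]
Similarly, \eqref{eq:full-conditioning-Y-square} and \eqref{eq:one-weak-reverse} give the same bound for $\E\widetilde g_i^2$. (Here $\E(Z_i\mid Y_{\le i})$ is obtained by conditioning the step-$i$ coupling on $Y_i$.) It then remains to show
\[
 \sum_i\E\big[h_i\log(e+1/h_i)\big]\le C\Psi_n(H).
\]
The function $\phi(h)=h\log(e+1/h)$ is concave and increasing on $[0,\infty)$. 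Jensen's inequality, applied first to the expectation and then to the average over $i$, gives
\[
 \sum_i\E\phi(h_i)\le n\,\phi(H/n)=H\log(e+n/H)=\Psi_n(H).
\]
I will verify concavity by direct differentiation: the derivative is $\log(e+1/h)-1/(e h+1)$, which is decreasing.

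For \eqref{eq:product-strong}, \eqref{eq:one-strong} gives
\[
 \E\norm{X-Y}^2\le CK^2\sum_i\E\big(\sqrt{h_i}+h_i\big).
\]
Concavity of $\sqrt{\cdot}$ together with Jensen's inequality yields
\[
 \sum_i\E\sqrt{h_i}\le\sqrt{nH},
\]
and $\sum_i\E h_i=H$.

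The main obstacle I expect is bookkeeping rather than estimates. First, the definitions of $g_i$ and $\widetilde g_i$ must match the conditional quantities controlled in \Cref{thm:one-dimensional}. Second, the entropy chain rule must be applied with conditioning on $Y_{<i}$ under $Q$, not on $X_{<i}$. This is exactly why the construction conditions on the $Y$ history. Finally, the case $h_i=0$ is handled by setting $X_i=Y_i$ at that step.
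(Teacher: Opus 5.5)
Your proposal is correct and follows essentially the same route as the paper: sequential one-dimensional maximal couplings driven by the $Y$-history, the entropy chain rule $H=\sum_i\E_Q h_i(Y_{<i})$, and the conditioning lemma to reduce both conditional estimates to $\E(g_i^2\mid Y_{<i})$ and $\E(\widetilde g_i^2\mid Y_{<i})$. From there you apply Jensen's inequality twice, to the concave function $\phi(h)=h\log(e+1/h)$ and to $\sqrt h$, exactly as the paper does (its formula for $\phi''$ agrees with your derivative computation).
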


\begin{proof}
If $H=0$, then $Q=\Pp$, and $X=Y$ gives all three bounds. Assume $H>0$.
Let
\[
 \rho=\frac{dQ}{d\Pp}.
\]
For \(i=0,1,\ldots,n\), define
\[
 \rho_i(y_{\le i})
 =\int \rho(y_{\le i},z_{>i})
 \prod_{j=i+1}^n\mu_j(dz_j),
 \qquad \rho_0=1.
\]
Thus \(\rho_i\) is the density of the first \(i\) coordinates under \(Q\)
with respect to \(\mu_1\ot\cdots\ot\mu_i\). Let \(Q_{<i}\) denote
the law of \(Y_{<i}\) under \(Q\). Then
\[
 Q_{<i}\{\rho_{i-1}=0\}
 =\int_{\{\rho_{i-1}=0\}}\rho_{i-1}\,
 d(\mu_1\ot\cdots\ot\mu_{i-1})=0.
\]
For \(\rho_{i-1}(y_{<i})>0\), set
\begin{equation}\label{eq:conditional-density}
 \kappa_i(y_i\mid y_{<i})
 =\frac{\rho_i(y_{\le i})}{\rho_{i-1}(y_{<i})}.
\end{equation}
On the null set where the denominator is zero, set \(\kappa_i=1\). Then
\[
 \int \kappa_i(y_i\mid y_{<i})\,\mu_i(dy_i)=1
\]
for \(Q_{<i}\)-almost every \(y_{<i}\), and the conditional law of \(Y_i\)
given \(Y_{<i}=y_{<i}\) is
\[
 Q_i(dy_i\mid y_{<i})
 =\kappa_i(y_i\mid y_{<i})\mu_i(dy_i).
\]
In particular,
\[
 Q_i(\,\cdot\mid y_{<i})\ll\mu_i
 \qquad\text{for }Q_{<i}\text{-almost every }y_{<i}.
\]
Define
\[
 h_i(y_{<i})
 =\int \kappa_i(y_i\mid y_{<i})
 \log \kappa_i(y_i\mid y_{<i})\,\mu_i(dy_i).
\]
Since \(\rho_n=\rho\), the ratios in
\eqref{eq:conditional-density} give, for \(Q\)-almost every \(y\),
\[
 \rho(y)=\prod_{i=1}^n \kappa_i(y_i\mid y_{<i}).
\]
Therefore
\begin{align}
 H
 &=\E_Q\log\rho(Y)\notag\\
 &=\sum_{i=1}^n\E_Q\log \kappa_i(Y_i\mid Y_{<i})\notag\\
 &=\sum_{i=1}^n\E_Q h_i(Y_{<i}).
 \label{eq:entropy-chain}
\end{align}

We now construct \((X,Y)\). After \(Y_{<i}\) is known, apply
\Cref{thm:one-dimensional} to
\[
 \mu_i
 \quad\text{and}\quad
 \kappa_i(\,\cdot\mid Y_{<i})\mu_i.
\]
Use the resulting pair as \((X_i,Y_i)\). Formula
\eqref{eq:maximal-coupling} is explicit in \(\kappa_i\), so these choices define
one joint distribution. At every step \(X_i\) has law \(\mu_i\), independent
of the previous values; hence \(X\sim \Pp\). Also,
\[
 \mathcal L(Y_i\mid Y_{<i})
 =\kappa_i(\,\cdot\mid Y_{<i})\mu_i,
\]
so \(Y\sim Q\).

Put
\[
 \phi(t)=t\log(e+1/t),\qquad \phi(0)=0.
\]
Condition on \(Y_{<i}=y_{<i}\). Then \((X_i,Y_i)\) is the
one-dimensional coupling between \(\mu_i\) and
\(\kappa_i(\cdot\mid y_{<i})\mu_i\), and its relative entropy is
\(h_i(y_{<i})\). Hence
\[
 \E(g_i^2\mid Y_{<i})\le CK^2\phi(h_i),
 \qquad
 \E(\widetilde g_i^2\mid Y_{<i})\le CK^2\phi(h_i).
\]
The conditioning lemma then gives
\begin{align*}
 \sum_i\E\left[\E(\abs{X_i-Y_i}\mid X)^2\right]
 &\le CK^2\sum_i\E_Q\phi(h_i),\\
 \sum_i\E\left[\E(\abs{X_i-Y_i}\mid Y)^2\right]
 &\le CK^2\sum_i\E_Q\phi(h_i).
\end{align*}
A direct differentiation gives
\[
 \phi''(t)=-\frac1{t(et+1)^2}<0,
 \qquad t>0.
\]
Thus
\begin{align*}
 \sum_i\E_Q\phi(h_i)
 &\le \sum_i\phi(\E_Q h_i)\\
 &\le n\phi\left(\frac1n\sum_i\E_Qh_i\right)\\
 &=\Psi_n(H),
\end{align*}
because \(\sum_i\E_Qh_i=H\). This proves the two conditional
first-moment estimates.

For the squared distance, condition on \(Y_{<i}\). The one-dimensional
estimate reads
\[
 \E(|X_i-Y_i|^2\mid Y_{<i})
 \le CK^2(\sqrt{h_i}+h_i).
\]
Summing over \(i\) gives
\[
 \E\norm{X-Y}^2
 \le CK^2\sum_i\E_Q(\sqrt{h_i}+h_i).
\]
Moreover,
\[
 \sum_i\E_Q\sqrt{h_i}
 \le\sum_i\sqrt{\E_Qh_i}
 \le\sqrt{n\sum_i\E_Qh_i}
 =\sqrt{nH}.
\]
Together with \(\sum_i\E_Qh_i=H\), this gives
\[
 \E\norm{X-Y}^2\le CK^2(\sqrt{nH}+H),
\]
as required.
\end{proof}

The main proof uses two conditional laws of \(\Pp\). The next theorem couples
them through one copy of \(\Pp\).

\begin{theorem}\label{thm:two-set-coupling}
Let \(A,B\subset\R^n\) be measurable sets with positive \(\Pp\)-probability.
Put
\[
 Q_A=\Pp(\,\cdot\mid A),
 \qquad
 Q_B=\Pp(\,\cdot\mid B),
\]
\[
 H_A=\log\frac1{\Pp(A)},
 \qquad
 H_B=\log\frac1{\Pp(B)}.
\]
There is a coupling \((X,Y)\) with \(X\sim Q_B\) and \(Y\sim Q_A\) such
that
\begin{align}
 \sum_i\E\left[\E(\abs{X_i-Y_i}\mid X)^2\right]
 &\le CK^2\big[\Psi_n(H_A)+\Psi_n(H_B)\big],
 \label{eq:two-set-weak-X}\\
 \sum_i\E\left[\E(\abs{X_i-Y_i}\mid Y)^2\right]
 &\le CK^2\big[\Psi_n(H_A)+\Psi_n(H_B)\big],
 \label{eq:two-set-weak-Y}\\
 \E\norm{X-Y}^2
 &\le CK^2\big[
 \sqrt{nH_A}+H_A+\sqrt{nH_B}+H_B
 \big],
 \label{eq:two-set-strong}
\end{align}
\end{theorem}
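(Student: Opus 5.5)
The plan is to glue two couplings from \Cref{thm:product-coupling} through a common copy of $\Pp$. First, $D(Q_A\Vert\Pp)=\log(1/\Pp(A))=H_A$ because $dQ_A/d\Pp=\1_A/\Pp(A)$; similarly $D(Q_B\Vert\Pp)=H_B$. Apply \Cref{thm:product-coupling} to $Q_A$ to get a pair $(W,Y)$ with $W\sim\Pp$ and $Y\sim Q_A$. Apply it to $Q_B$ to get $(W',X)$ with $W'\sim\Pp$ and $X\sim Q_B$. Then glue along the common marginal $\Pp$: choose a triple $(X,W,Y)$ with $W\sim\Pp$ such that $X$ and $Y$ are conditionally independent given $W$, with the prescribed conditional laws $\mathcal L(Y\mid W)$ and $\mathcal L(X\mid W)$. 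On the standard Borel space $\R^n$ this is the usual gluing lemma, using regular conditional distributions. The pair $(X,Y)$ is the desired coupling.

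The strong bound \eqref{eq:two-set-strong} follows at once. Since $\norm{X-Y}^2\le2\norm{X-W}^2+2\norm{W-Y}^2$, apply \eqref{eq:product-strong} to each pair.

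For \eqref{eq:two-set-weak-X}, use $\abs{X_i-Y_i}\le\abs{X_i-W_i}+\abs{W_i-Y_i}$, which gives
\[
 \E(\abs{X_i-Y_i}\mid X)\le\E(\abs{X_i-W_i}\mid X)+\E\big[\E(\abs{W_i-Y_i}\mid W)\,\big|\,X\big].
\]
The second term uses conditional independence: given $(X,W)$, the law of $Y$ depends only on $W$, so $\E(\abs{W_i-Y_i}\mid X,W)=\E(\abs{W_i-Y_i}\mid W)$. Square this, use $(a+b)^2\le2a^2+2b^2$, and apply conditional Jensen to the second term:
\[
 \E\Big[\E\big[\E(\abs{W_i-Y_i}\mid W)\mid X\big]^2\Big]\le\E\big[\E(\abs{W_i-Y_i}\mid W)^2\big].
\]
Summing over $i$, the second term is controlled by \eqref{eq:product-weak-X} applied to the $A$-coupling, in which $W$ plays the role of the $\Pp$-distributed variable. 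The first term is $\sum_i\E[\E(\abs{X_i-W_i}\mid X)^2]$, with $X\sim Q_B$ the tilted variable. This is exactly \eqref{eq:product-weak-Y} for the $B$-coupling. Hence the total is at most $CK^2[\Psi_n(H_A)+\Psi_n(H_B)]$.

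Inequality \eqref{eq:two-set-weak-Y} is proved the same way with the roles of $A$ and $B$ exchanged. We condition on $Y$ and use $\E(\abs{W_i-X_i}\mid Y,W)=\E(\abs{W_i-X_i}\mid W)$. The $Y$-term is handled by \eqref{eq:product-weak-Y} for the $A$-coupling, and the $W$-term by \eqref{eq:product-weak-X} for the $B$-coupling.

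The main point requiring care is that both directions of the weak cost are needed: conditioning on $\Pp$-side and on the $Q$-side. This is why \Cref{thm:product-coupling} records both \eqref{eq:product-weak-X} and \eqref{eq:product-weak-Y}. The rest is the conditional-independence identity from the gluing construction. Measurability is routine since all spaces are Polish.
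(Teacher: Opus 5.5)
Your proposal is correct and matches the paper's proof essentially step for step. Like the paper, you glue the two couplings from \Cref{thm:product-coupling} along a common $\Pp$-distributed variable, with $X$ and $Y$ conditionally independent given it. You then use the triangle inequality, conditional independence plus Jensen, and the two directions \eqref{eq:product-weak-X} and \eqref{eq:product-weak-Y}. You also state explicitly the identity $D(Q_A\Vert\Pp)=H_A$, which the paper uses without comment.
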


\begin{proof}
Apply \Cref{thm:product-coupling} to \((\Pp,Q_A)\) and to \((\Pp,Q_B)\). Write
the two pairs as
\[
 (U,Y),\qquad U\sim \Pp,\quad Y\sim Q_A,
\]
and
\[
 (U,X),\qquad U\sim \Pp,\quad X\sim Q_B.
\]
Use the same \(U\sim \Pp\) in both pairs: after \(U\) is drawn, draw \(X\)
and \(Y\) independently according to their respective conditional laws
given \(U\). Then \(X\sim Q_B\) and \(Y\sim Q_A\).

For each \(i\),
\[
 \E(\abs{X_i-Y_i}\mid X)
 \le \E(\abs{X_i-U_i}\mid X)
 +\E(\abs{U_i-Y_i}\mid X).
\]
For the pair \((U,X)\), the product coupling theorem gives
\[
 \sum_i\E\left[\E(|X_i-U_i|\mid X)^2\right]
 \le CK^2\Psi_n(H_B).
\]
For the second term, set
\[
 c_i(U)=\E(\abs{U_i-Y_i}\mid U).
\]
Since \(X\) and \(Y\) are independent after \(U\) is fixed,
\[
 \E(\abs{U_i-Y_i}\mid X)=\E(c_i(U)\mid X),
\]
and hence
\[
 \E\left[\E(\abs{U_i-Y_i}\mid X)^2\right]
 \le \E c_i(U)^2.
\]
For the pair \((U,Y)\),
\[
 \sum_i\E c_i(U)^2\le CK^2\Psi_n(H_A).
\]
Using \((a+b)^2\le2a^2+2b^2\) and summing over \(i\) gives the first
conditional estimate. Interchanging \(X\) and \(Y\) gives the second.

Finally,
\[
 \norm{X-Y}^2
 \le2\norm{X-U}^2+2\norm{U-Y}^2.
\]
The squared-distance estimate for \((U,X)\) is at most
\(CK^2(\sqrt{nH_B}+H_B)\), and the estimate for \((U,Y)\) is at most
\(CK^2(\sqrt{nH_A}+H_A)\). Taking expectation in
$\norm{X-Y}^2\le2\norm{X-U}^2+2\norm{U-Y}^2$ gives
the stated bound for \(\E\norm{X-Y}^2\).
\end{proof}

\section{Tensor estimates for nearby norms}
\label{sec:geometry}

Write
\[
 S^{n-1}=\{x\in\mathbb R^n:\norm x=1\},
 \qquad
 B_2^n=\{x\in\mathbb R^n:\norm x\le1\}.
\]
We now estimate the change of \(x^{\otimes d}\). We first assume that the
vectors have the same norm. We then allow their norms to lie between two
nearby numbers \(R_-\) and \(R_+\).

\begin{lemma}\label{lem:differential}
For \(\Phi_d(x)=x^{\otimes d}\), with the second term in
\eqref{eq:differential-exact} omitted when $d=1$,
\begin{align}
 D\Phi_d(x)h
 &=\sum_{j=1}^d
 x^{\otimes(j-1)}\ot h\ot x^{\otimes(d-j)},
 \label{eq:differential}\\
 \norm{D\Phi_d(x)h}^2
 &=d\norm{x}^{2d-2}\norm{h}^2
 +d(d-1)\norm{x}^{2d-4}\ip{x}{h}^2.
 \label{eq:differential-exact}
\end{align}
In particular, if \(h\perp x\), then
\[
 \norm{D\Phi_d(x)h}=\sqrt d\norm{x}^{d-1}\norm h.
\]
\end{lemma}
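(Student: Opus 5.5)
The plan is to differentiate the multilinear map directly and then compute the squared norm using the inner product structure of the Hilbert tensor product. First, for the derivative formula \eqref{eq:differential}, I will expand
\[
 (x+\varepsilon h)^{\otimes d}=\sum_{S\subset\{1,\ldots,d\}}\varepsilon^{\abs S}\,v_1^S\ot\cdots\ot v_d^S,
\]
where $v_j^S=h$ if $j\in S$ and $v_j^S=x$ otherwise. This is immediate from multilinearity of $(v_1,\ldots,v_d)\mapsto v_1\ot\cdots\ot v_d$. The terms with $\abs S=1$ are exactly the $d$ summands $x^{\otimes(j-1)}\ot h\ot x^{\otimes(d-j)}$. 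All remaining terms are $O(\varepsilon^2)$ in norm, since $\norm{v_1\ot\cdots\ot v_d}=\prod\norm{v_j}$. Hence $\Phi_d$ is (Fréchet) differentiable with the stated derivative.

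For \eqref{eq:differential-exact}, write $T_j=x^{\otimes(j-1)}\ot h\ot x^{\otimes(d-j)}$ and expand $\norm{\sum_jT_j}^2=\sum_{j,k}\ip{T_j}{T_k}$. I will use the rule $\ip{v_1\ot\cdots\ot v_d}{w_1\ot\cdots\ot w_d}=\prod_l\ip{v_l}{w_l}$.
\begin{itemize}
\item The diagonal terms $j=k$ each equal $\norm x^{2d-2}\norm h^2$, which contributes $d\norm x^{2d-2}\norm h^2$.
\item For $j\ne k$, slot $j$ pairs $h$ with $x$, slot $k$ pairs $x$ with $h$, and the remaining $d-2$ slots pair $x$ with $x$. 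So $\ip{T_j}{T_k}=\ip xh^2\norm x^{2d-4}$.
\item There are $d(d-1)$ ordered off-diagonal pairs, which gives the second term. When $d=1$ there are no such pairs, so the term is omitted, consistent with the statement.
\end{itemize}

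The special case $h\perp x$ then follows by setting $\ip xh=0$ and taking square roots.

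The only point requiring care is the $d=1$ and $d=2$ bookkeeping of the exponent $2d-4$. For $d=2$ the exponent is zero, so no negative powers of $\norm x$ appear; for $d=1$ the off-diagonal sum is empty. I will also justify the product formula for inner products on elementary tensors, which is the definition of the Hilbert tensor product inner product, extended bilinearly. There is no real obstacle beyond this.
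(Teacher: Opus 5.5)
Your proposal is correct and follows essentially the same route as the paper: write $D\Phi_d(x)h$ as the sum of the $d$ one-slot tensors and expand the squared norm into diagonal terms $\norm x^{2d-2}\norm h^2$ and $d(d-1)$ off-diagonal terms $\norm x^{2d-4}\ip xh^2$. Your multilinear expansion of $(x+\varepsilon h)^{\otimes d}$ also justifies the derivative formula, which the paper simply asserts.
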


\begin{proof}
Set
\[
 A_j=x^{\otimes(j-1)}\ot h\ot x^{\otimes(d-j)}.
\]
Then
\[
 D\Phi_d(x)h=\sum_{j=1}^d A_j,
\]
\[
 \norm{A_j}^2=\norm{x}^{2d-2}\norm h^2,
\]
and, for \(j\ne k\),
\[
 \ip{A_j}{A_k}
 =\norm{x}^{2d-4}\ip{x}{h}^2.
\]
Hence
\begin{align*}
 \norm{D\Phi_d(x)h}^2
 &=\sum_{j=1}^d\norm{A_j}^2
 +\sum_{j\ne k}\ip{A_j}{A_k}\\
 &=d\norm{x}^{2d-2}\norm h^2
 +d(d-1)\norm{x}^{2d-4}\ip{x}{h}^2.
\end{align*}
\end{proof}

We shall also use the following identity. For every \(x,y\in\R^n\),
\begin{equation}\label{eq:tensor-telescoping}
 x^{\otimes d}-y^{\otimes d}
 =\sum_{j=1}^d
 x^{\otimes(j-1)}\ot(x-y)\ot y^{\otimes(d-j)}.
\end{equation}
To verify it, put
\[
 T_j=x^{\otimes j}\ot y^{\otimes(d-j)},\qquad 0\le j\le d.
\]
Then the \(j\)th summand is \(T_j-T_{j-1}\), so the sum is
\(T_d-T_0=x^{\otimes d}-y^{\otimes d}\).

We next assume that all vectors have norm \(r\). In this case the component
of the average displacement in the direction of \(x\) is already of second
order, while the first-order part uses the factor \(\sqrt d\) from
\Cref{lem:differential}.

\begin{lemma}\label{lem:one-sphere}
Let $r>0$, let \(x\in rS^{n-1}\), and let \(\nu\) be a probability measure on
\(rS^{n-1}\). Put
\[
 q_i=\int\abs{x_i-y_i}\,d\nu(y).
\]
Then
\begin{equation}\label{eq:one-sphere}
 \norm{x^{\otimes d}-\int y^{\otimes d}\,d\nu(y)}
 \le \sqrt d\,r^{d-1}\norm{q}_2
 +Cd r^{d-2}\int\norm{x-y}^2\,d\nu(y).
\end{equation}
\end{lemma}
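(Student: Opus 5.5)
We will prove \eqref{eq:one-sphere} by expanding $y^{\otimes d}$ around $x$ to first order, averaging over $\nu$, and bounding the linear term and the remainder separately. Write $h=y-x$. By \eqref{eq:tensor-telescoping} applied twice (or by direct multilinear expansion),
\[
 y^{\otimes d}-x^{\otimes d}=D\Phi_d(x)h+\mathcal R(x,y),
\qquad
 \mathcal R(x,y)=\sum_{k=2}^d\ \sum_{\abs S=k} T_S,
\]
where $T_S$ places $h$ in the slots indexed by $S$ and $x$ elsewhere. To bound the remainder we use the telescoping identity in its exact form rather than the binomial sum, which would lose a factor like $2^d$. Subtracting $D\Phi_d(x)h=\sum_j x^{\otimes(j-1)}\ot h\ot x^{\otimes(d-j)}$ from \eqref{eq:tensor-telescoping} (with roles of $x,y$ chosen so the slots after $j$ carry $y$) gives
\[
 \mathcal R=\sum_{j=1}^d x^{\otimes(j-1)}\ot h\ot\big(y^{\otimes(d-j)}-x^{\otimes(d-j)}\big).
\]
Applying \eqref{eq:tensor-telescoping} again with all norms equal to $r$ yields $\norm{y^{\otimes m}-x^{\otimes m}}\le m r^{m-1}\norm h$. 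Hence $\norm{\mathcal R}\le\sum_j (d-j)r^{d-2}\norm h^2\le \tfrac{d^2}{2}r^{d-2}\norm h^2$.

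This crude bound carries $d^2$, but the statement needs $Cd$. We therefore expect to use the sphere constraint at this point. Since $\norm x=\norm y=r$, we have $\ip{x}{h}=-\norm h^2/2$, so the part of $h$ along $x$ is already second order. We plan to bound the $d^2$ remainder more carefully: either use the better estimate $\norm{y^{\otimes m}-x^{\otimes m}}\le\sqrt m\,r^{m-1}\norm h+Cm r^{m-2}\norm h^2$ inductively (via \Cref{lem:differential}), or compute $\norm{\mathcal R}^2$ exactly via inner products. In the latter route, $\norm{y^{\otimes d}-x^{\otimes d}}^2=2r^{2d}-2\ip xy^d$ and $\ip xy=r^2-\norm h^2/2$. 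One then shows $\norm{y^{\otimes d}-x^{\otimes d}-D\Phi_d(x)h}\le Cd\,r^{d-2}\norm h^2$ for $\norm h\le r/\sqrt d$, treating the other case separately.

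That case is handled by a trivial bound. When $\norm h\ge r/\sqrt d$, we have $\norm{y^{\otimes d}-x^{\otimes d}}\le 2r^d\le 2d r^{d-2}\norm h^2$. We must also control $\norm{D\Phi_d(x)h}$ there. By \eqref{eq:differential-exact} with $\ip xh=-\norm h^2/2$, we get $\norm{D\Phi_d(x)h}\le\sqrt d r^{d-1}\norm h+ d r^{d-2}\norm h^2$, and the first term is at most $d r^{d-2}\norm h^2$ in this regime. So the pointwise estimate
\[
 \norm{\mathcal R(x,y)}\le Cd\,r^{d-2}\norm{x-y}^2
\]
holds in both regimes, and we expect establishing it to be the main obstacle.

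For the small-$\norm h$ regime, the exact expansion of $\norm{\mathcal R}^2$ writes it as $\norm{y^{\otimes d}-x^{\otimes d}}^2-2\ip{y^{\otimes d}-x^{\otimes d}}{D\Phi_d h}+\norm{D\Phi_d h}^2$. All three terms are explicit polynomials in $r^2$ and $\epsilon=\norm h^2/(2r^2)$, since $\ip{x^{\otimes(j-1)}\ot h\ot x^{\otimes(d-j)}}{y^{\otimes d}}=\ip xy^{d-1}\ip hy$. A Taylor expansion in $\epsilon$ with $d\epsilon\le 1/2$ should show the result is $O(d^2r^{2d-4}\norm h^4)$, as required.

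With the pointwise bound in hand, we integrate in $\nu$:
\[
 \norm{x^{\otimes d}-\int y^{\otimes d}d\nu}\le\norm{D\Phi_d(x)\bar h}+Cd\,r^{d-2}\int\norm{x-y}^2d\nu,
\]
where $\bar h=\int(y-x)\,d\nu$. Here $\ip x{\bar h}=-\tfrac12\int\norm h^2d\nu$, so \eqref{eq:differential-exact} gives $\norm{D\Phi_d(x)\bar h}\le\sqrt d r^{d-1}\norm{\bar h}+d r^{d-2}\cdot\tfrac12\int\norm{x-y}^2d\nu$. Finally, $\abs{\bar h_i}\le q_i$, so $\norm{\bar h}\le\norm q_2$, which gives \eqref{eq:one-sphere}.
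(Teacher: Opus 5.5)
Your proposal is correct. It reaches the key pointwise bound $\norm{\mathcal R(x,y)}\le Cd\,r^{d-2}\norm{x-y}^2$, where $\mathcal R(x,y)=y^{\ot d}-x^{\ot d}-D\Phi_d(x)(y-x)$, by a different route from the paper.

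The paper works geometrically:
\begin{itemize}
\item It joins $x$ to $y$ by the great-circle arc $\gamma$ and shows $\norm{F'}=\sqrt d\,r^{d-1}$ and $\norm{F''}\le Cd\,r^{d-2}$ for $F=\gamma^{\ot d}$. The cross terms containing two copies of $\gamma'$ are mutually orthogonal because $\gamma'\perp\gamma$.
\item It applies Taylor's formula with integral remainder.
\item It then pays $\norm{D\Phi_d(x)}_{\mathrm{op}}\cdot C\norm{x-y}^2/r$ to replace the arc's initial velocity by the chord.
\end{itemize}
This needs a unit vector $u\perp x$, so the paper assumes $n\ge2$ and treats $n=1$ separately.

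You instead observe that all inner products among $x^{\ot d}$, $y^{\ot d}$ and the summands of $D\Phi_d(x)h$ depend only on $\ip xy=r^2(1-\epsilon)$, where $\epsilon=\norm h^2/(2r^2)\le2$. So $\norm{\mathcal R}^2$ is an explicit function of one variable. You left the final expansion as something that ``should'' work. It does, and with no case split. With $\phi(t)=(1-t)^d$ and some $\xi\in[0,\epsilon]\subset[0,2]$,
\[
 \frac{\norm{\mathcal R}^2}{r^{2d}}
 =2-2(1-\epsilon)^d-2d\epsilon(1-\epsilon)^{d-1}+d(d-1)\epsilon^2
 =\epsilon^2\phi''(\xi)+d(d-1)\epsilon^2
 \le2d(d-1)\epsilon^2,
\]
because $\abs{1-\xi}\le1$. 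Hence $\norm{\mathcal R}\le\frac{d}{\sqrt2}\,r^{d-2}\norm{h}^2$ for every $y\in rS^{n-1}$ and every $n\ge1$. Your separate treatment of $\norm h\ge r/\sqrt d$ is correct but unnecessary.

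What each route buys: your Gram computation is more elementary, gives an explicit constant, and avoids the special case $n=1$. The paper's geodesic argument is more conceptual: it shows directly that motion along the sphere has first-order speed $\sqrt d\,r^{d-1}$ and only $O(d\,r^{d-2})$ curvature.

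The averaging step is the same in substance. You apply \eqref{eq:differential-exact} to $\bar h$ with $\ip x{\bar h}=-\frac12\int\norm{x-y}^2\,d\nu$ and use $\sqrt{a+b}\le\sqrt a+\sqrt b$; the paper splits $\bar h$ into $h_\parallel$ and $h_\perp$. Both keep constant $1$ in front of $\sqrt d\,r^{d-1}\norm{q}_2$.

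Of the two options you list, the inductive one would not suffice if combined with the triangle inequality over $j$. Inserting $\norm{y^{\ot m}-x^{\ot m}}\le\sqrt m\,r^{m-1}\norm h+\cdots$ into your telescoped remainder gives $\sum_j\sqrt{d-j}\asymp d^{3/2}$. The exact computation is the right choice.
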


\begin{proof}
For $n=1$ the assertion follows directly from the two-point structure of
$rS^0$, so assume $n\ge2$. Fix \(y\in rS^{n-1}\). Choose
\(\theta\in[0,\pi]\) and a unit vector
\(u\perp x\) such that
\[
 y=(\cos\theta)x+r(\sin\theta)u.
\]
Put \(\ell=r\theta\) and
\[
 \gamma(s)=\cos(s/r)x+r\sin(s/r)u,
 \qquad 0\le s\le\ell.
\]
Then
\[
 \gamma(0)=x,\qquad \gamma(\ell)=y,
\]
\[
 \norm{\gamma(s)}=r,\qquad
 \norm{\gamma'(s)}=1,\qquad
 \ip{\gamma(s)}{\gamma'(s)}=0,
\]
and
\[
 \gamma''(s)=-\frac1{r^2}\gamma(s).
\]
Let \(F(s)=\gamma(s)^{\otimes d}\). By
\eqref{eq:differential-exact},
\[
 \norm{F'(s)}=\sqrt d\,r^{d-1}.
\]
Also,
\begin{align*}
 F''(s)
 &=\sum_{j=1}^d
 \gamma^{\otimes(j-1)}\ot\gamma''\ot
 \gamma^{\otimes(d-j)}\\
 &\quad+2\sum_{1\le j<k\le d}
 \gamma^{\otimes(j-1)}\ot\gamma'\ot
 \gamma^{\otimes(k-j-1)}\ot\gamma'\ot
 \gamma^{\otimes(d-k)}.
\end{align*}
The first sum equals \(-d\gamma(s)^{\otimes d}/r^2\). The tensors in the
second sum are mutually orthogonal because \(\gamma'(s)\perp\gamma(s)\).
Thus
\[
 \norm{F''(s)}^2
 \le d^2r^{2d-4}+4\binom d2r^{2d-4}
 \le C d^2r^{2d-4},
\]
so
\begin{equation}\label{eq:F-second}
 \norm{F''(s)}\le Cd r^{d-2}.
\end{equation}

Moreover,
\[
 \ell=r\theta\le\frac\pi2\norm{x-y},
\]
and
\[
 \ell\gamma'(0)-(y-x)
 =(1-\cos\theta)x+r(\theta-\sin\theta)u.
\]
Hence
\begin{equation}\label{eq:initial-chord}
 \norm{\ell\gamma'(0)-(y-x)}
 \le C\frac{\norm{x-y}^2}{r}.
\end{equation}
From
\[
 F(\ell)-F(0)-\ell F'(0)
 =\int_0^\ell(\ell-s)F''(s)\,ds,
\]
\eqref{eq:F-second}, \eqref{eq:initial-chord}, and
\(\norm{D\Phi_d(x)}_{\mathrm{op}}=dr^{d-1}\), we obtain
\begin{equation}\label{eq:sphere-taylor}
 \norm{\Phi_d(y)-\Phi_d(x)-D\Phi_d(x)(y-x)}
 \le Cd r^{d-2}\norm{x-y}^2.
\end{equation}

Now set
\[
 h=\int(y-x)\,d\nu(y),
\qquad
 h_{\parallel}=\frac{\ip{x}{h}}{r^2}x,
\qquad
 h_{\perp}=h-h_{\parallel}.
\]
Since
\[
 \abs{h_i}
 \le\int\abs{x_i-y_i}\,d\nu(y)=q_i,
\]
we have
\[
 \norm{h_{\perp}}\le\norm h\le\norm{q}_2
\]
and therefore
\begin{equation}\label{eq:perp-first}
 \norm{D\Phi_d(x)h_{\perp}}
 \le\sqrt d\,r^{d-1}\norm{q}_2.
\end{equation}
Because \(\norm x=\norm y=r\),
\[
 \ip{x}{y-x}=-\frac12\norm{x-y}^2,
\]
so
\[
 \ip{x}{h}
 =-\frac12\int\norm{x-y}^2\,d\nu(y)
\]
and
\begin{equation}\label{eq:parallel-first}
 \norm{D\Phi_d(x)h_{\parallel}}
 \le \frac d2r^{d-2}\int\norm{x-y}^2\,d\nu(y).
\end{equation}
Write
\[
 R(y)=\Phi_d(y)-\Phi_d(x)-D\Phi_d(x)(y-x).
\]
Then
\[
 x^{\otimes d}-\int y^{\otimes d}\,d\nu(y)
 =-D\Phi_d(x)h-\int R(y)\,d\nu(y).
\]
The component \(h_\perp\) contributes at most
\(\sqrt d\,r^{d-1}\norm{q}_2\), while \(h_\parallel\) contributes at most
\[
 \frac d2r^{d-2}\int\norm{x-y}^2\,d\nu(y).
\]
The remainder satisfies
\[
 \int\norm{R(y)}\,d\nu(y)
 \le Cd r^{d-2}\int\norm{x-y}^2\,d\nu(y).
\]
Adding these three bounds proves the lemma.
\end{proof}

A subgaussian vector does not have a fixed norm. We therefore allow
\(\norm x\) and \(\norm y\) to lie in the same short interval.

\begin{lemma}\label{lem:nearby-norms}
Let \(0<R_-\le R_+\) and \(R_+\le2R_-\). Put
\[
 \mathcal A=\{z\in\R^n:R_-\le\norm z\le R_+\},
 \qquad
 \Delta=R_+-R_-.
\]
Let \(x\in\mathcal A\), and let \(\nu\) be a probability measure on
\(\mathcal A\). Put
\[
 q_i=\int\abs{x_i-y_i}\,d\nu(y).
\]
Then
\begin{align}
 \norm{x^{\otimes d}-\int y^{\otimes d}\,d\nu(y)}
 &\le C\sqrt d\,R_+^{d-1}\norm{q}_2\notag\\
 &\quad+CdR_+^{d-2}
 \left(R_+\Delta+\int\norm{x-y}^2\,d\nu(y)\right).
 \label{eq:nearby-norms}
\end{align}
\end{lemma}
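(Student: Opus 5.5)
The plan is to reduce to \Cref{lem:one-sphere} by radial projection onto a single sphere, and to control the radial correction separately. Fix the radius $r=R_+$ (any radius in $[R_-,R_+]$ would do). Define the projection $P(z)=rz/\norm z$ on $\mathcal A$. Put $x'=P(x)$, and let $\nu'$ be the push-forward of $\nu$ under $P$. Then
\[
 \norm{x^{\otimes d}-\int y^{\otimes d}\,d\nu}
 \le \norm{x^{\otimes d}-x'^{\otimes d}}
 +\norm{x'^{\otimes d}-\int y'^{\otimes d}\,d\nu'}
 +\int\norm{y'^{\otimes d}-y^{\otimes d}}\,d\nu(y).
\]
For the two outer terms I would use $z^{\otimes d}-P(z)^{\otimes d}=(1-(r/\norm z)^d)z^{\otimes d}$. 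Since $\norm z\ge R_-\ge R_+/2$, this has norm
\[
 \abs{\norm z^d-r^d}\le d R_+^{d-1}\Delta = dR_+^{d-2}\,R_+\Delta,
\]
by the mean value theorem. This is exactly the $R_+\Delta$ term.

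For the middle term I would apply \Cref{lem:one-sphere} on $rS^{n-1}$. This gives
\[
 \sqrt d\,r^{d-1}\norm{q'}_2+Cdr^{d-2}\int\norm{x'-y'}^2\,d\nu,
 \qquad q'_i=\int\abs{x'_i-y'_i}\,d\nu(y).
\]
The quadratic part is harmless, because $P$ is Lipschitz on $\mathcal A$ with constant $\le 2r/R_-\le 4$. Hence $\norm{x'-y'}\le C\norm{x-y}$.

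The main obstacle is the first-order term $\norm{q'}_2$, which must be compared with $\norm q_2$ coordinatewise. Writing $a=r/\norm x$ and $b=r/\norm y$, both in $[1,2]$, I would use
\[
 \abs{x'_i-y'_i}=\abs{a(x_i-y_i)+(a-b)y_i}\le 2\abs{x_i-y_i}+\abs{a-b}\,\abs{y_i},
\]
with $\abs{a-b}\le C\Delta/R_+$. Hence
\[
 q'_i\le 2q_i+\frac{C\Delta}{R_+}\int\abs{y_i}\,d\nu(y).
\]
By Minkowski's integral inequality,
\[
 \Big\|\int\abs{y_i}\,d\nu\Big\|_{\ell_2}\le\int\norm y\,d\nu\le R_+,
\]
so $\norm{q'}_2\le 2\norm q_2+C\Delta$. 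The extra contribution is then $\sqrt d\,R_+^{d-1}C\Delta\le CdR_+^{d-2}R_+\Delta$, which is absorbed.

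Summing the three pieces, and using $r=R_+$, gives \eqref{eq:nearby-norms}. If a sharper split is wanted, $\abs{a-b}$ can also be bounded by $C\abs{\norm x-\norm y}/R_+\le C\norm{x-y}/R_+$, but the crude $\Delta$ bound suffices.
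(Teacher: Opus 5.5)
Your proof is correct and follows essentially the paper's route: project radially onto one sphere, apply \Cref{lem:one-sphere}, compare the projected first-order and quadratic quantities with $\norm{q}_2$ and $\int\norm{x-y}^2\,d\nu$, and pay $dR_+^{d-1}\Delta$ for each radial correction. The only difference is that the paper projects just $y$, onto the sphere of radius $\norm x$, and keeps $x$ fixed; this removes your correction term for $x$ and the $(a-b)y_i$ bookkeeping, at the cost of a $\Delta^2$ term in the quadratic part, which is absorbed using $\Delta\le R_+$.
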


\begin{proof}
Write
\[
 r=\norm x,\qquad s=\norm y,
\qquad
 \widehat y=\frac r s y.
\]
Then
\[
 \norm{\widehat y}=r,
 \qquad
 \norm{y-\widehat y}=\abs{s-r}\le\Delta.
\]
If
\[
 \widehat q_i=\int\abs{x_i-\widehat y_i}\,d\nu(y),
\]
then
\[
 \norm{\widehat q}_2\le\norm{q}_2+\Delta
\]
and
\[
 \norm{x-\widehat y}^2
 \le C\big(\norm{x-y}^2+\Delta^2\big).
\]
Apply \Cref{lem:one-sphere} to the probability measure obtained from
\(y\mapsto\widehat y\). Since \(r\le R_+\),
\begin{align*}
 \norm{x^{\otimes d}-\int\widehat y^{\otimes d}\,d\nu(y)}
 &\le C\sqrt d\,R_+^{d-1}(\norm{q}_2+\Delta)\\
 &\quad+CdR_+^{d-2}
 \left(\int\norm{x-y}^2\,d\nu(y)+\Delta^2\right).
\end{align*}
The remaining change only changes the norm of \(y\):
\[
 \norm{\widehat y^{\otimes d}-y^{\otimes d}}
 =|r^d-s^d|\le dR_+^{d-1}\Delta.
\]
Since \(\Delta\le R_+\) and \(\sqrt d\le d\), every term containing
\(\Delta\) or \(\Delta^2\) is bounded by a constant multiple of
\[
 dR_+^{d-1}\Delta.
\]
Together with
\[
 dR_+^{d-2}\int\norm{x-y}^2\,d\nu(y),
\]
these are exactly the two quantities on the right side of
\eqref{eq:nearby-norms}.
\end{proof}

\section{Proof of the concentration bounds}

This section combines the coupling estimates from Section~\ref{sec:coupling}
with the tensor estimates from Section~\ref{sec:geometry}. We first restrict
\(X\) to a set on which \(\norm X\) is close to \(\sqrt n\). We then
compare a lower level set and an upper level set of \(f(X^{\otimes d})\).
The next section has a different purpose: it rewrites the resulting estimate
in terms of the rate function \(\mathcal I_{n,d}\) and treats bounded
coordinates by a separate argument.

We first control $\norm X$.

\begin{lemma}\label{lem:shell}
Under the assumptions of \Cref{thm:main},
\begin{equation}\label{eq:shell-tail}
 \Pp\left\{
 \abs{\norm X^2-n}>C_K(\sqrt{nu}+u)
 \right\}
 \le2e^{-c_Ku}
\end{equation}
for every $u\ge1$.
\end{lemma}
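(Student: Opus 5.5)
We will derive \eqref{eq:shell-tail} from Bernstein's inequality for sums of independent centered subexponential variables. Write
\[
 \norm X^2-n=\sum_{i=1}^n (X_i^2-1).
\]
The summands are independent and have mean zero because $\E X_i^2=1$.

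First we bound their subexponential norms. For any random variable, $\norm{X_i^2}_{\psi_1}=\norm{X_i}_{\psi_2}^2\le K^2$; this follows directly from the definitions, since $\E\exp(X_i^2/a^2)\le2$ is the same condition in both norms. Centering changes the $\psi_1$ norm by at most an absolute factor:
\[
 \norm{X_i^2-1}_{\psi_1}\le \norm{X_i^2}_{\psi_1}+\norm{1}_{\psi_1}\le K^2+1/\log 2\le C K^2.
\]
The last step uses $K\ge c$, which holds because $1=\E X_i^2\le CK^2$ by the moment bound of \Cref{lem:weighted-moment} (or by \eqref{eq:fourth-moment} and Jensen).

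Next we apply Bernstein's inequality. For independent centered $\xi_i$ with $\norm{\xi_i}_{\psi_1}\le M$,
\[
 \Pp\Big\{\Big|\sum_i\xi_i\Big|>s\Big\}\le 2\exp\Big[-c\min\Big(\frac{s^2}{nM^2},\frac sM\Big)\Big].
\]
Take $M=CK^2$ and $s=C'K^2(\sqrt{nu}+u)$. Then
\[
 \frac{s^2}{nM^2}\ge \frac{C'^2}{C^2}u,\qquad \frac sM\ge\frac{C'}{C}u,
\]
so the exponent is at least $c_Ku$. This gives \eqref{eq:shell-tail}, with $C_K=C'K^2$ and with $c_K$ in fact absolute.

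The only step needing any care is the centering and normalization of the $\psi_1$ norms. If one wants the argument self-contained without quoting Bernstein, it can be proved instead by the usual moment generating function bound $\E e^{\lambda\xi_i}\le e^{C\lambda^2M^2}$ for $|\lambda|\le c/M$, followed by Chernoff optimization over $\lambda$. No real obstacle is expected.
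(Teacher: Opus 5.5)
Your argument is correct and is essentially the paper's proof: write $\norm X^2-n=\sum_i(X_i^2-1)$, bound $\norm{X_i^2-1}_{\psi_1}$ by a constant depending only on $K$, and apply Bernstein's inequality at the threshold $C_K(\sqrt{nu}+u)$. Your extra bookkeeping makes explicit what the paper leaves implicit: the identity $\norm{X_i^2}_{\psi_1}=\norm{X_i}_{\psi_2}^2$, the lower bound on $K$ (which also follows at once from $1+\E X_i^2/K^2\le\E e^{X_i^2/K^2}\le2$, giving $K\ge1$), and the resulting choice $C_K=C'K^2$ with an absolute $c_K$.
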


\begin{proof}
The variables \(X_i^2-1\) are independent and centered. Since
\(\norm{X_i}_{\psi_2}\le K\),
\[
 \norm{X_i^2-1}_{\psi_1}\le C_K.
\]
Hence, for every \(t>0\), Bernstein's inequality gives
\[
 \Pp\left\{\left|\sum_{i=1}^n(X_i^2-1)\right|>t\right\}
 \le2\exp\left[-c_K\min\left(\frac{t^2}{n},t\right)\right].
\]
Taking \(t=C_K(\sqrt{nu}+u)\) proves the lemma.
\end{proof}

\begin{proof}[Proof of \Cref{thm:global}]
Let $M$ be a median. Fix $C_K$ large enough for all estimates below, and then
choose $\eta_K>0$ so that
\[
 C_K(\sqrt{\eta_K}+\eta_K)\le\frac14.
\]
Assume first that $1\le u\le\eta_Kn$. The Bernstein estimate in
\Cref{lem:shell} gives
\[
 G_u=\{x:\abs{\norm x^2-n}\le w_u\},
 \qquad
 \Pp(G_u^c)\le\min\{1/4,2e^{-c_Ku}\}.
\]
Then $w_u\le n/4$, and
\[
 R_-(u)=\sqrt{n-w_u},\qquad R_+(u)=\sqrt{n+w_u},\qquad
 R_+(u)\le2R_-(u).
\]
Write $R_u=R_+(u)$.

We prove the upper-tail estimate. Put
\[
 A=G_u\cap\{Z\le M\},
 \qquad
 B=G_u\cap\{Z\ge M+t\}.
\]
The set $A$ has probability at least $1/4$. Assume that
$\Pp(B)>e^{-a u}$, where $a>0$ is a small constant. Then
\[
 H_A=\log(1/\Pp(A))\le\log4,
 \qquad
 H_B=\log(1/\Pp(B))\le au.
\]
Use \Cref{thm:two-set-coupling} with these two sets. Thus we may choose one
joint pair \((X,Y)\) such that
\[
 X\sim \Pp(\,\cdot\mid B),\qquad Y\sim \Pp(\,\cdot\mid A),
\]
and
\begin{align*}
 \sum_i\E\left[\E(|X_i-Y_i|\mid X)^2\right]
 &\le C_Ku\log(e+n/u),\\
 \E\norm{X-Y}^2&\le C_K(\sqrt{nu}+u).
\end{align*}
Here \(H_A\le\log4\), \(H_B\le au\), and \(u\ge1\).

For \(\Pp(\cdot\mid B)\)-almost every \(x\), let
\[
 \nu_x=\mathcal L(Y\mid X=x).
\]
Since \(Y\in A\),
\[
 f\left(\int y^{\otimes d}\,d\nu_x(y)\right)
 \le\int f(y^{\otimes d})\,d\nu_x(y)\le M.
\]
Since \(x\in B\), \(f(x^{\otimes d})\ge M+t\). Therefore the
\(L\)-Lipschitz property gives
\begin{equation}\label{eq:pointwise-separation}
 t\le L\norm{x^{\otimes d}-\int y^{\otimes d}\,d\nu_x(y)}.
\end{equation}

Both \(x\) and every \(y\) in the support of \(\nu_x\) lie in
\(G_u\). Hence
\[
 R_-(u)\le\norm x,\norm y\le R_+(u).
\]
Put
\[
 q_i(x)=\int|x_i-y_i|\,d\nu_x(y)
       =\E(|X_i-Y_i|\mid X=x).
\]
Then \Cref{lem:nearby-norms} gives
\begin{align*}
 \norm{x^{\otimes d}-\int y^{\otimes d}\,d\nu_x(y)}
 &\le C\sqrt d\,R_u^{d-1}\norm{q(x)}_2\\
 &\quad+CdR_u^{d-2}R_u(R_+(u)-R_-(u))\\
 &\quad+CdR_u^{d-2}\int\norm{x-y}^2\,d\nu_x(y).
\end{align*}
Taking expectation in \(x\) and using \(\E\sqrt W\le\sqrt{\E W}\),
\begin{equation}\label{eq:average-weak}
 \E\norm{q(X)}_2\le C_K\sqrt{u\log(e+n/u)}.
\end{equation}
The same coupling also satisfies
\begin{equation}\label{eq:average-strong}
 \E\norm{X-Y}^2\le C_K(\sqrt{nu}+u).
\end{equation}
Finally,
\[
 R_u(R_+(u)-R_-(u))
 =R_u\frac{2w_u}{R_+(u)+R_-(u)}
 \le C_Kw_u\le C_K(\sqrt{nu}+u).
\]
Taking expectation in \eqref{eq:pointwise-separation},
\begin{align*}
 t
 &\le CL\sqrt d\,R_u^{d-1}\E\norm{q(X)}_2\\
 &\quad+CLdR_u^{d-2}\left[
 R_u(R_+(u)-R_-(u))+\E\norm{X-Y}^2\right]\\
 &\le C_KL\left[
 \sqrt d\,R_u^{d-1}h_n(u)
 +dR_u^{d-2}(\sqrt{nu}+u)
 \right].
\end{align*}
Thus $\Pp(B)\le e^{-au}$ by the choice of $C_K$. Adding $\Pp(G_u^c)$ gives the
upper-tail bound.

For the lower tail, put
\[
 A=G_u\cap\{Z\le M-t\},
 \qquad
 B=G_u\cap\{Z\ge M\}.
\]
Now $B$ has probability at least $1/4$. If $\Pp(A)>e^{-au}$, apply
\Cref{thm:two-set-coupling} with $X\sim \Pp(\cdot\mid B)$ and
$Y\sim \Pp(\cdot\mid A)$. For $X=x$, let $\nu_x=\mathcal L(Y\mid X=x)$.
Then
\[
 f(x^{\otimes d})\ge M,\qquad
 f\left(\int y^{\otimes d}d\nu_x(y)\right)\le M-t,
\]
so
\[
 t\le L\left\|x^{\otimes d}-\int y^{\otimes d}d\nu_x(y)\right\|.
\]
With
\[
 q_i(x)=\E(|X_i-Y_i|\mid X=x),
\]
\begin{align*}
 t
 &\le CL\sqrt d\,R_u^{d-1}\E\norm{q(X)}_2\\
 &\quad+CLdR_u^{d-2}\left[
 R_u(R_+(u)-R_-(u))+\E\norm{X-Y}^2\right]\\
 &\le C_KL\left[
 \sqrt d\,R_u^{d-1}h_n(u)
 +dR_u^{d-2}(\sqrt{nu}+u)
 \right].
\end{align*}
Thus $\Pp(A)\le e^{-au}$ by the choice of $C_K$. Therefore
\[
 \Pp\{Z\le M-t\}
 \le \Pp(A)+\Pp(G_u^c)
 \le Ce^{-c_Ku}.
\]
This proves \eqref{eq:global-shell}.

Now assume that $u\ge\max\{1,\eta_Kn\}$. Let
\[
 G_u=\{x:\norm x\le\widetilde R_u\},
 \qquad
 \widetilde R_u=\sqrt n+C_K\sqrt u.
\]
The Bernstein estimate in \Cref{lem:shell} gives
\[
 \Pp(G_u^c)\le\min\{1/4,2e^{-c_Ku}\}.
\]
For every \(x,y\),
\[
 x^{\otimes d}-y^{\otimes d}
 =\sum_{j=1}^d x^{\otimes(j-1)}\ot(x-y)\ot y^{\otimes(d-j)}.
\]
Assume \(\norm x,\norm y\le\widetilde R_u\), and put
\[
 q_i=\int|x_i-y_i|\,d\nu(y).
\]
For the \(j\)th summand after integration, expand the changed slot in the
basis \((e_i)_{i=1}^n\). The terms with different \(i\) are orthogonal,
and each coefficient has norm at most \(\widetilde R_u^{d-1}q_i\). Thus
\[
 \left\|\int x^{\otimes(j-1)}\ot(x-y)\ot y^{\otimes(d-j)}
 \,d\nu(y)\right\|
 \le\widetilde R_u^{d-1}\left(\sum_iq_i^2\right)^{1/2}.
\]
Summing over \(j=1,\ldots,d\) gives
\begin{equation}\label{eq:first-order-ball}
 \norm{x^{\otimes d}-\int y^{\otimes d}\,d\nu(y)}
 \le d\widetilde R_u^{d-1}
 \left(\sum_i\left[\int\abs{x_i-y_i}\,d\nu(y)\right]^2\right)^{1/2}.
\end{equation}
For the upper tail, set
\[
 A=G_u\cap\{Z\le M\},\qquad B=G_u\cap\{Z\ge M+t\}.
\]
If $\Pp(B)>e^{-au}$, then $\Pp(A)\ge1/4$. Couple the conditional laws on $A$ and
$B$ as in \Cref{thm:two-set-coupling}. Since $H_A\le\log4$, $H_B\le au$, and
$u\ge\max\{1,\eta_Kn\}$,
\[
 \Psi_n(H_A)+\Psi_n(H_B)\le C_Ku.
\]
Hence
\[
 \E\left(\sum_i\E(|X_i-Y_i|\mid X)^2\right)^{1/2}
 \le C_K\sqrt u.
\]
For $X=x\in B$, let $\nu_x=\mathcal L(Y\mid X=x)$. Then
\[
 t\le L\left\|x^{\otimes d}-\int y^{\otimes d}\,d\nu_x(y)\right\|.
\]
Taking expectation and using \eqref{eq:first-order-ball},
\[
 t\le Ld\widetilde R_u^{d-1}
 \E\left(\sum_i\E(|X_i-Y_i|\mid X)^2\right)^{1/2}
 \le C_KLd\widetilde R_u^{d-1}\sqrt u.
\]
Thus $\Pp(B)\le e^{-au}$ by the choice of $C_K$. Adding $\Pp(G_u^c)$ gives the
upper-tail bound.

For the lower tail, put
\[
 A=G_u\cap\{Z\le M-t\},\qquad B=G_u\cap\{Z\ge M\}.
\]
If $\Pp(A)>e^{-au}$, couple $X\sim \Pp(\cdot\mid B)$ and
$Y\sim \Pp(\cdot\mid A)$. Then
\[
 \E\left(\sum_i\E(|X_i-Y_i|\mid X)^2\right)^{1/2}
 \le C_K\sqrt u,
\]
and
\[
 t\le Ld\widetilde R_u^{d-1}
 \E\left(\sum_i\E(|X_i-Y_i|\mid X)^2\right)^{1/2}
 \le C_KLd\widetilde R_u^{d-1}\sqrt u.
\]
Thus $\Pp(A)\le e^{-au}$. Therefore
\[
 \Pp\{Z\le M-t\}
 \le \Pp(A)+\Pp(G_u^c)
 \le Ce^{-c_Ku},
\]
and \eqref{eq:global-ball} follows.
\end{proof}

We now simplify the global bound in the moderate range.

\begin{corollary}\label{cor:median-tail}
Under the assumptions of \Cref{thm:main}, let $M$ be a median of $Z$. If
\[
 1\le u\le c_K\frac n{d^2},
\]
then
\begin{equation}\label{eq:median-tail}
 \Pp\left\{
 \abs{Z-M}>C_KL n^{(d-1)/2}
 \left(d\sqrt u+\sqrt{d u\log\left(e+\frac nu\right)}\right)
 \right\}
 \le Ce^{-c_Ku}.
\end{equation}
\end{corollary}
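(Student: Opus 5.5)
The plan is to derive \eqref{eq:median-tail} directly from the shell estimate \eqref{eq:global-shell} of \Cref{thm:global}, by showing that its right-hand threshold is at most a constant multiple of $n^{(d-1)/2}\bigl(d\sqrt u+\sqrt{du\log(e+n/u)}\bigr)$ once $u\le c_Kn/d^2$. First, I will shrink $c_K$ so that $c_K\le\eta_K$. Then $u\le c_Kn/d^2\le\eta_Kn$, so we are in the first regime of \Cref{thm:global}, and the event in \eqref{eq:global-shell} has probability at most $Ce^{-c_Ku}$.

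The key step is to control $R_u^{d-1}$ and $R_u^{d-2}$ by $n^{(d-1)/2}$ and $n^{(d-2)/2}$. We have $R_u^2=n+w_u=n\bigl(1+C_K(\sqrt{u/n}+u/n)\bigr)$. With $u\le c_Kn/d^2$ we get $\sqrt{u/n}\le\sqrt{c_K}/d$ and $u/n\le c_K/d^2$, so
\[
 R_u^{d-1}\le n^{(d-1)/2}\Bigl(1+\frac{C_K\sqrt{c_K}+C_Kc_K}{d}\Bigr)^{(d-1)/2}\le n^{(d-1)/2}e^{(C_K\sqrt{c_K}+C_Kc_K)/2}\le C\,n^{(d-1)/2},
\]
and similarly $R_u^{d-2}\le Cn^{(d-2)/2}$ for $d\ge2$. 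For $d=1$ the factor $R_u^{d-2}=R_u^{-1}$ is bounded by $Cn^{-1/2}$ because $R_u\ge\sqrt n$. This is the only place where the restriction $u\lesssim n/d^2$ is genuinely used: it is exactly what keeps $(1+O(\sqrt{u/n}))^{d}$ bounded. Since $C_K$ is already fixed by \Cref{thm:global}, we may choose $c_K$ afterwards so that $C_K\sqrt{c_K}\le1$.

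Substituting these bounds, the first term becomes $C\sqrt d\,n^{(d-1)/2}h_n(u)=Cn^{(d-1)/2}\sqrt{du\log(e+n/u)}$. The second term becomes $Cd\,n^{(d-2)/2}(\sqrt{nu}+u)$. Because $u\le n$, we have $\sqrt{nu}+u\le2\sqrt{nu}$, so this is at most $2Cn^{(d-1)/2}d\sqrt u$. Hence the threshold in \eqref{eq:global-shell} is dominated by $C_K'Ln^{(d-1)/2}\bigl(d\sqrt u+\sqrt{du\log(e+n/u)}\bigr)$, and enlarging $C_K$ gives \eqref{eq:median-tail}.

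I expect no real obstacle here. The only point requiring care is the order of constants: fix $C_K$ and $\eta_K$ from \Cref{thm:global} first, and only then choose $c_K$ small enough that both $c_K\le\eta_K$ and the exponential factor above is bounded by an absolute constant.
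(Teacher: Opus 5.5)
Your proposal is correct and follows the paper's own argument. The paper also shrinks $c_K$ so that $u\le\eta_K n$, uses $u\le c_Kn/d^2$ to get $R_u^{d-1}\le Cn^{(d-1)/2}$ and $R_u^{d-2}(\sqrt{nu}+u)\le Cn^{(d-1)/2}\sqrt u$, and substitutes these into \eqref{eq:global-shell}; your explicit handling of $d=1$ and of the order in which the constants are fixed supplies details the paper leaves implicit.
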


\begin{proof}
After reducing $c_K$, this range is contained in $u\le\eta_Kn$. Also,
\[
 \frac{d w_u}{n}\le C_K\left(d\sqrt{u/n}+du/n\right)\le c.
\]
Hence
\[
 R_u^{d-1}\le C n^{(d-1)/2},
 \qquad
 R_u^{d-2}(\sqrt{nu}+u)
 \le Cn^{(d-1)/2}\sqrt u.
\]
Substituting these two estimates into the moderate-range bound of
\Cref{thm:global} gives \eqref{eq:median-tail}.
\end{proof}

The next lemma converts the global tail estimate into an \(L_p\) estimate.
Taking \(p=1\) will then control the distance between the median and the
mean.

\begin{lemma}\label{lem:tail-to-moments}
Let $W\ge0$ satisfy the two bounds in \Cref{thm:global}, with $L=1$.
Then, for $1\le p\le c_Kn/d^2$,
\begin{equation}\label{eq:tail-to-moments}
 \norm{W}_{L_p}
 \le C_K n^{(d-1)/2}
 \left(d\sqrt p+\sqrt{d p\log\left(e+\frac np\right)}\right).
\end{equation}
\end{lemma}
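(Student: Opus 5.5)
We interpret the hypothesis as: for $1\le u\le\eta_Kn$,
\[
 \Pp\{W>\tau(u)\}\le Ce^{-c_Ku},\qquad
 \tau(u)=C_K\big[\sqrt d\,R_u^{d-1}h_n(u)+dR_u^{d-2}(\sqrt{nu}+u)\big],
\]
and for $u\ge\max\{1,\eta_Kn\}$ the analogous bound with $\widetilde\tau(u)=C_Kd\widetilde R_u^{d-1}\sqrt u$. The plan is to integrate the tail, $\E W^p=\int_0^\infty pt^{p-1}\Pp\{W>t\}\,dt$, after the change of variables $t=\tau(u)$. The natural splitting point is $u=p$. The contribution of $t\le\tau(p)$ is at most $\tau(p)^p$. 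By the computation in the proof of \Cref{cor:median-tail}, for $p\le c_Kn/d^2$ we have $\tau(p)\le C_Kn^{(d-1)/2}G(p)$, where $G(u)=d\sqrt u+\sqrt{du\log(e+n/u)}$. So this part already gives the right side of \eqref{eq:tail-to-moments}. It remains to show that the tail beyond $\tau(p)$ contributes at most $(C\tau(p))^p$.

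We split the remaining range into three regimes.

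\textbf{Regime (i):} $p\le u\le c_Kn/d^2$. Here $\tau(u)\le C_Kn^{(d-1)/2}G(u)$, and $G(u)\le\sqrt{u/p}\,G(p)$, since $G(u)/\sqrt u$ is nonincreasing. This contribution is therefore bounded by
\[
 \int_p^\infty p\,(G(p)\sqrt{u/p})^{p}\,e^{-c_Ku}\,\frac{du}{u}\cdot(C_Kn^{(d-1)/2})^p .
\]
The $u$-integral is essentially $(C/c_K)^{p/2}p^{-p/2}\Gamma(p/2)\cdot p\le C_K^p$, which is the standard Gaussian-moment computation.

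\textbf{Regime (ii):} $c_Kn/d^2\le u\le\eta_Kn$. Now $R_u^{d-1}$ is no longer comparable to $n^{(d-1)/2}$. Since $w_u\le C_K\sqrt{nu}$, we have $R_u^{d-1}\le n^{(d-1)/2}\exp(C_Kd\sqrt{u/n})$. Because $d\sqrt{u/n}\le d\sqrt{\eta_K}$ can be large, the growth $\exp(C_Kd\sqrt{u/n})$ must be beaten by $e^{-c_Ku}$. Write $d\sqrt{u/n}=\sqrt u\cdot d/\sqrt n$. On this range $d/\sqrt n\le\sqrt{u}\,d^{2}/n\cdot(\sqrt n/d)$; more usefully, $u\ge c_Kn/d^2$ gives $d/\sqrt n\le C_K\sqrt u/\sqrt n\cdot d/\sqrt{\,}\!\ldots$. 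The cleanest formulation is $C_Kd\sqrt{u/n}\le C_K u\cdot(d/\sqrt{nu})\le C_K'\sqrt{u}$ times a factor at most $1$, because $\sqrt{nu}\ge\sqrt{c_K}\,n/d\ge\ldots$. Concretely, the exponent is $O(u)$ with a constant that shrinks as $\eta_K$ shrinks. This is the main obstacle: one has to check that the constant in front of $u$ coming from $(R_u/\sqrt n)^{p(d-1)}$ is smaller than $c_K$. I would try to achieve this by noting that in this regime $p\le c_Kn/d^2\le u$, so
\[
 (R_u/\sqrt n)^{p(d-1)}\le\exp\big(C_Kpd\sqrt{u/n}\big)\le\exp\big(C_K\sqrt{c_K}\,\sqrt{n}\,\sqrt{u}/d\big)\le\exp\big(C_K'\sqrt{c_K}\,u\big),
\]
using $\sqrt n/d\le\sqrt{u/c_K}$. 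Reducing $c_K$ in the range $p\le c_Kn/d^2$ then makes this $\le e^{c_Ku/2}$, and the polynomial factors $(d\sqrt u)^p$ are handled exactly as in regime (i).

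\textbf{Regime (iii):} $u\ge\eta_Kn$. Here we use the ball bound. Since $\widetilde R_u\le\sqrt n\,(1+C_K\sqrt{u/n})$, we get $\widetilde R_u^{p(d-1)}\le n^{p(d-1)/2}\exp(C_Kpd\sqrt{u/n})$. With $pd\le c_Kn/d\le c_K n$ and $\sqrt{u/n}\le u/(\sqrt{\eta_K}\,n)$, the exponent is at most $C_Kc_K\eta_K^{-1/2}u$. Choosing $c_K$ small relative to $\eta_K$ absorbs this into $e^{-c_Ku/2}$. The remaining factor $(d\sqrt u)^pe^{-c_Ku/2}$ again integrates to at most $(C_Kd\sqrt p)^p$.

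Collecting the three regimes gives $\E W^p\le(C_K\tau(p))^p$, and hence \eqref{eq:tail-to-moments}.
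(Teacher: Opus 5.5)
Your proposal is correct and follows essentially the paper's route. Integrating the tail against the threshold $\tau(u)$ is the same as the paper's quantile bound $W^*(V)\le\tau(C_K\log(e/V))$. In both arguments the decisive point is that $p\le c_Kn/d^2$ lets the growth factor $(R_u/\sqrt n)^{p(d-1)}\le\exp(C_Kpd\sqrt{u/n})$ be absorbed by $e^{-c_Ku}$. The paper does this in one stroke via $\beta\sqrt U\le\varepsilon U+\beta^2/(4\varepsilon)$ and Cauchy--Schwarz, and treats $u>\eta_Kn$ by the crude bound $C_K^d\,dU^{d/2}$; you instead split at $u\asymp n/d^2$ and absorb the factor directly in each regime.

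One slip needs fixing in regime (ii). The middle step should read $pd\sqrt{u/n}\le c_K\sqrt{nu}/d\le\sqrt{c_K}\,u$. As you wrote it, with $\sqrt{c_K}$ in place of $c_K$ there, the final step yields only $C_Ku$, with no small factor to beat the tail.
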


\begin{proof}
Define
\[
 W^*(v)=\inf\{t\ge0:\Pp\{W>t\}\le v\},
 \qquad 0<v<1.
\]
Let \(V\) be uniform on \((0,1)\), and choose $C_K\ge1$ so that
\[
 U=C_K\log(e/V)\ge1.
\]
By the quantile representation,
$\norm{W}_{L_p}=\norm{W^*(V)}_{L_p}$. If
\(\Pp\{W>T(u)\}\le Ce^{-cu}\), then the definition of \(W^*\)
implies
\[
 W^*(v)\le T\big(C_K\log(e/v)\big),\qquad 0<v<1,
\]
after changing \(C_K\). Apply this observation to the two bounds in
\Cref{thm:global}.

On $\{U\le\eta_Kn\}$,
\[
 R_U\le\sqrt n+C_K\sqrt U.
\]
With $a=C_Kd/\sqrt n$,
\begin{align*}
 R_U^{d-1}
 &\le n^{(d-1)/2}e^{a\sqrt U},\\
 R_U^{d-2}(\sqrt{nU}+U)
 &\le n^{(d-1)/2}e^{a\sqrt U}
 \left(\sqrt U+\frac U{\sqrt n}\right).
\end{align*}
Hence
\begin{align*}
 W^*(V)
 &\le C_K n^{(d-1)/2}e^{a\sqrt U}
 \left[\sqrt d\,h_n(U)
 +d\left(\sqrt U+\frac U{\sqrt n}\right)\right].
\end{align*}
The variable $U$ has an exponential tail. We record the details of the
two estimates used here. Since
\[
 h_n(\lambda u)\le\sqrt{\lambda}\,h_n(u),\qquad \lambda\ge1,
\]
a dyadic decomposition of the exponential tail gives, for every $q\ge1$,
\[
 \norm{h_n(U)}_{L_q}\le C_K h_n(q)
 =C_K\sqrt{q\log(e+n/q)}.
\]
Also, the inequality
\[
 \beta\sqrt U\le \varepsilon U+\frac{\beta^2}{4\varepsilon}
\]
and the exponential tail of $U$ imply
\[
 \norm{e^{a\sqrt U}}_{L_{2p}}
 \le \exp(C_Kpa^2)\le C_K
\]
when $p\le c_Kn/d^2$. Therefore Cauchy--Schwarz gives
\[
 \norm{h_n(U)e^{a\sqrt U}}_{L_p}
 \le C_K\sqrt{p\log(e+n/p)}.
\]
Similarly, $\norm{U}_{L_q}\le C_Kq$ yields
\[
 \norm{\sqrt U+U/\sqrt n}_{L_{2p}}
 \le C_K(\sqrt p+p/\sqrt n)\le C_K\sqrt p,
\]
and another application of Cauchy--Schwarz gives
\[
 \norm{(\sqrt U+U/\sqrt n)e^{a\sqrt U}}_{L_p}
 \le C_K\sqrt p.
\]

On $\{U>\eta_Kn\}$, \eqref{eq:global-ball} gives
\[
 W^*(V)
 \le C_Kd(\sqrt n+C_K\sqrt U)^{d-1}\sqrt U
 \le C_K^d d\,U^{d/2}.
\]
Let $m=pd/2$. After reducing the constant in $p\le c_Kn/d^2$,
\[
 m\le\frac{\eta_Kn}{4C_K}.
\]
Since $\Pp\{U>u\}\le C_Ke^{-u/C_K}$,
\begin{align*}
 \E\left[U^m\1_{\{U>\eta_Kn\}}\right]
 &\le (\eta_Kn)^m\Pp\{U>\eta_Kn\}
 +m\int_{\eta_Kn}^{\infty}u^{m-1}\Pp\{U>u\}\,du\\
 &\le C_K n^m e^{-c_Kn}.
\end{align*}
Therefore
\[
 \left\|C_K^d d\,U^{d/2}\1_{\{U>\eta_Kn\}}\right\|_{L_p}
 \le C_K^d d\,n^{d/2}e^{-c_Kn/p}.
\]
Put $q=n/p$. After reducing the constant in $p\le c_Kn/d^2$,
\[
 q\ge A_Kd^2,
 \qquad
 C_K^d\sqrt q\,e^{-c_Kq}\le C_K.
\]
Therefore the bound
\(C_K^d d\,n^{d/2}e^{-c_Kn/p}\) is at most
\[
 C_Kd n^{(d-1)/2}\sqrt p.
\]
This proves \eqref{eq:tail-to-moments}.
\end{proof}

\begin{proof}[Proof of \Cref{thm:main}]
Apply \Cref{lem:tail-to-moments} to \(W=|Z-M|/L\). Then
\[
 \norm{Z-M}_{L_p}
 \le C_KL n^{(d-1)/2}
 \left(d\sqrt p+\sqrt{d p\log(e+n/p)}\right).
\]
With \(p=1\),
\[
 |\E Z-M|\le\E|Z-M|
 \le C_KL n^{(d-1)/2}\left(d+\sqrt{d\log(en)}\right).
\]
The triangle inequality therefore gives the stated bound for
\(\norm{Z-\E Z}_{L_p}\). To obtain the tail estimate, first assume
$u\ge u_0$, where $u_0>1$ is a sufficiently large absolute constant, and take
$p=c_0u$ with $c_0>0$ small enough that
$1\le p\le c_Kn/d^2$. Markov's inequality gives
\[
 \Pp\{|Z-\E Z|>e\norm{Z-\E Z}_{L_p}\}\le e^{-p}.
\]
This yields \eqref{eq:main-tail} for $u\ge u_0$, after changing constants. For
$1\le u<u_0$, enlarge the absolute prefactor $C$ in
\eqref{eq:main-tail} so that the bound is trivial. Thus
\eqref{eq:main-tail} holds throughout the stated range.
\end{proof}

\section{The full natural range and bounded coordinates}

The proof of
\Cref{thm:main} gives the concentration estimate with the probability parameter
\(u\). We now make two further deductions. First, we solve for \(u\) in terms
of the deviation size and obtain the rate function \(\mathcal I_{n,d}\).
Second, under the stronger assumption \(|X_i|\le K\), we give a different
proof based on Talagrand's product inequality. The bounded-coordinate proof
does not use the subgaussian coupling from Section~\ref{sec:coupling}; this
is why it is kept separate from the proof of the main theorem.

\begin{lemma}\label{lem:rate-inversion}
Put
\[
 G_{n,d}(u)=d\sqrt u+
 \sqrt{du\log(e+n/u)},\qquad u>0.
\]
There are absolute constants $c,C>0$ such that, for $0<s\le c\sqrt n$,
\begin{equation}\label{eq:rate-inversion}
 c\mathcal I_{n,d}(s)
 \le\sup\left\{0<u\le\frac n{d^2}:G_{n,d}(u)\le s\right\}
 \le C\mathcal I_{n,d}(s).
\end{equation}
The supremum of the empty set is understood as zero.
\end{lemma}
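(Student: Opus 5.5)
The plan is to invert $G=G_{n,d}$ up to constants, using two facts. First, $G$ is continuous and strictly increasing on $(0,\infty)$. Second, $G(u)\asymp \max\{d\sqrt u,\sqrt{du\,\ell(u)}\}$ with $\ell(u)=\log(e+n/u)$. Set $u_*=\sup\{0<u\le n/d^2:G(u)\le s\}$. At the endpoint, $G(n/d^2)\ge d\sqrt{n/d^2}=\sqrt n>s$ once $c<1$. Hence either the set is empty, or $u_*<n/d^2$ and $G(u_*)=s$; and since $G(u)\to0$ as $u\to0$, the set is in fact nonempty. So in all cases $G(u_*)=s$ with $0<u_*<n/d^2$. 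It remains to show $u_*\asymp \mathcal I_{n,d}(s)=\min\{s^2/d^2,\ s^2/(d\,\ell_s)\}$, where $\ell_s=\log(e+nd/s^2)$.

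For the upper bound, $G(u_*)=s$ gives $d\sqrt{u_*}\le s$, so $u_*\le s^2/d^2$, and $d u_*\ell(u_*)\le s^2$. To turn the second inequality into $u_*\le Cs^2/(d\ell_s)$, I need $\ell(u_*)\gtrsim \ell_s$, i.e.\ $n/u_*\gtrsim nd/s^2$ up to the logarithm. Since $u_*\le s^2/d^2\le s^2/d$, we have $n/u_*\ge nd/s^2$, hence $\ell(u_*)\ge\ell_s$ directly. Thus $u_*\le\min\{s^2/d^2,\ s^2/(d\ell_s)\}=\mathcal I_{n,d}(s)$.

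For the lower bound, set $v=c_1\mathcal I_{n,d}(s)$ with $c_1$ small. I need $v\le n/d^2$ and $G(v)\le s$; monotonicity then gives $u_*\ge v$. The first holds because $v\le c_1s^2/d^2\le c_1c^2n/d^2$. For the second, $d\sqrt v\le\sqrt{c_1}\,s$. The obstacle is the log term: I need $\ell(v)\lesssim\ell_s$. Since $v$ may be much smaller than $s^2/d$, the ratio $n/v$ can exceed $nd/s^2$ by a factor $\lambda=(s^2/d)/v$. Write $\lambda= 1/c_1\cdot\max\{d,\ \ell_s\}$, which follows from the two branches of $\mathcal I_{n,d}$ (the $s^2/d^2$ branch gives the factor $d$, the other gives $\ell_s$). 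Then use $\log(e+\lambda x)\le \log\lambda+\log(e+x)$ for $\lambda\ge1$, so that
\[
\ell(v)\le \log(1/c_1)+\log\max\{d,\ell_s\}+\ell_s .
\]
The $\log\ell_s$ and constant terms are $\lesssim\ell_s$, since $\ell_s\ge1$.

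The term $\log d$ is the delicate one, and it enters only in the branch where $\mathcal I_{n,d}(s)=s^2/d^2$, i.e.\ $d\ge\ell_s$. There I check the log term differently: $dv\,\ell(v)\le c_1(s^2/d)\bigl(\ell_s+\log(d/c_1)\bigr)$. Since $\ell_s\le d$ and $\log(d/c_1)\le C d$, this is at most $Cc_1 s^2$. In the other branch, $\ell_s>d$, so $\log d\le\ell_s$. In both branches $dv\,\ell(v)\le Cc_1 s^2$, hence $G(v)\le(\sqrt{c_1}+\sqrt{Cc_1})s\le s$ for $c_1$ small. This yields $u_*\ge c\,\mathcal I_{n,d}(s)$ and completes \eqref{eq:rate-inversion}.
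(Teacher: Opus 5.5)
Your proof is correct and follows essentially the paper's argument: the upper bound comes from the two terms of $G_{n,d}$ separately, and the lower bound tests $u=c_1\mathcal I_{n,d}(s)$ in the same two cases $\ell_s\le d$ and $\ell_s>d$. The only differences are minor. You get $\log(e+n/u)\ge\ell_s$ directly from $u\le s^2/d^2\le s^2/d$, whereas the paper uses monotonicity of $u\log(e+n/u)$ and the estimate $F(b)\ge v$. Also, the constant in your $\log(d/c_1)\le Cd$ (like the paper's) really grows like $\log(1/c_1)$, which is harmless because $c_1\log(e/c_1)\to0$.
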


\begin{proof}
Set
\[
 a=\frac{s^2}{d^2},\qquad
 v=\frac{s^2}{d},\qquad
 L_v=\log(e+n/v),\qquad
 b=\frac{v}{L_v}.
\]
Suppose first that \(G_{n,d}(u)\le s\). Then
\[
 d\sqrt u\le s,
\]
so \(u\le a\). The second term gives
\[
 u\log(e+n/u)\le v.
\]
Put
\[
 F(u)=u\log(e+n/u).
\]
The function \(F\) is increasing, and
\[
 F(b)=\frac{v}{L_v}\log\left(e+\frac nvL_v\right)\ge v,
\]
because \(L_v\ge1\). Thus \(F(u)\le v\) implies \(u\le b\). Hence
\[
 u\le\min(a,b)=\mathcal I_{n,d}(s).
\]
This proves the upper estimate in \eqref{eq:rate-inversion}.

For the lower estimate, take
\[
 u=c_0\min(a,b)
\]
with \(c_0>0\) sufficiently small. Then
\[
 d\sqrt u\le\sqrt{c_0}\,s.
\]
It remains to control the second term in $G_{n,d}(u)$.

Suppose first that $b\le a$. Then $u=c_0b=c_0v/L_v$, and
\[
 \log(e+n/u)
 =\log\left(e+\frac{nL_v}{c_0v}\right)
 \le C L_v.
\]
Consequently,
\[
 du\log(e+n/u)
 \le Cc_0\,d\frac{v}{L_v}L_v
 =Cc_0s^2.
\]

Now suppose that $a<b$. Since
\[
 \frac{s^2}{d^2}<\frac{s^2/d}{L_v},
\]
we have $L_v<d$. In this case $u=c_0a=c_0s^2/d^2=c_0v/d$, and hence
\begin{align*}
 \log(e+n/u)
 &=\log\left(e+\frac{nd}{c_0v}\right)\\
 &\le \log(e+n/v)+\log(d/c_0)\\
 &\le C d.
\end{align*}
Therefore
\[
 du\log(e+n/u)
 \le d\,\frac{c_0v}{d}\,Cd
 =Cc_0s^2.
\]
Thus, in both cases,
\[
 \sqrt{du\log(e+n/u)}\le C\sqrt{c_0}\,s.
\]
For sufficiently small $c_0$, the two terms in $G_{n,d}(u)$ have sum at most
$s$. Finally, $s\le c\sqrt n$ ensures $u\le n/d^2$ after reducing $c$.
Thus the supremum in \eqref{eq:rate-inversion} is at least
$c\mathcal I_{n,d}(s)$.
\end{proof}

\begin{proof}[Proof of \Cref{thm:full-range}]
Put
\[
 s=\frac{t}{Ln^{(d-1)/2}}.
\]
Fix constants $C_{K,0}>0$ and $\gamma_K>0$ for which
\eqref{eq:main-tail} holds with threshold
$C_{K,0}G_{n,d}(u)$ on $1\le u\le\gamma_Kn/d^2$.
Assume first that $\mathcal I_{n,d}(s)$ is larger than a sufficiently
large constant depending only on $K$. Increase $C_{K,0}$, if necessary, so that
$2C_{K,0}\ge1$, and put
\[
 s_0=\frac{s}{2C_{K,0}},
 \qquad
 a=2C_{K,0}.
\]
Then $a\ge1$ and $s_0=s/a$. Also,
\[
 \frac{s_0^2}{d^2}=a^{-2}\frac{s^2}{d^2},
\]
and, with $x=nd/s^2$,
\[
 \log(e+x)\le\log(e+a^2x)
 \le\log(e+x)+2\log a
 \le(1+2\log a)\log(e+x).
\]
Hence
\[
 \mathcal I_{n,d}(s_0)\asymp_K\mathcal I_{n,d}(s).
\]
After reducing the constant in the natural range, $s_0$ is in the range of
\Cref{lem:rate-inversion}. Put
\[
 u_0=\sup\left\{0<u\le\frac n{d^2}:G_{n,d}(u)\le s_0\right\}.
\]
Then
\[
 u_0\asymp_K\mathcal I_{n,d}(s),
 \qquad
 G_{n,d}(u_0)\le s_0.
\]
Decrease $\gamma_K$, if necessary, so that $\gamma_K\le1$, and put
\[
 u=\gamma_Ku_0.
\]
Choose the $K$-dependent lower threshold for $\mathcal I_{n,d}(s)$ large
enough that $u\ge1$. Then
\[
 u\le\gamma_K\frac n{d^2},
 \qquad
 u\asymp_K\mathcal I_{n,d}(s),
\]
and, since $G_{n,d}$ is increasing,
\[
 C_{K,0}G_{n,d}(u)
 \le C_{K,0}G_{n,d}(u_0)
 \le\frac s2.
\]
Hence \eqref{eq:main-tail} gives
\[
 \Pp\{\abs{Z-\E Z}>t\}
 \le C\exp[-c_K\mathcal I_{n,d}(s)].
\]

If $\mathcal I_{n,d}(s)$ is below the $K$-dependent threshold in the first
case,
decrease the constant $c_K$ in the exponent, if necessary, so that the product
of $c_K$ with that threshold is at most one. Then the absolute choice $C=e$
ensures
\[
 C\exp[-c_K\mathcal I_{n,d}(s)]\ge1,
\]
and the required probability bound is automatic. The assumption
$t\le c_KLn^{d/2}$ also gives $s\le c_K\sqrt n$.
\end{proof}

We next treat bounded coordinates. For simple tensors with independent
factors, Vershynin's convex theorem has variance scale $d n^{d-1}$
\cite[Theorem~1.3]{Vershynin}. In the symmetric model, the radial example
$f(T)=\norm T$ forces the larger scale $d^2n^{d-1}$. The next theorem shows
that this larger scale is sufficient and that no logarithmic factor is needed.

\begin{theorem}
\label{thm:bounded-full}
Let $X_1,\ldots,X_n$ be independent and assume
\[
 \E X_i=0,\qquad \E X_i^2=1,\qquad \abs{X_i}\le K.
\]
Let $L>0$, let $f:H_d\to\R$ be convex and $L$-Lipschitz, and put
$Z=f(X^{\otimes d})$. Then, for every $d\ge1$ and
$0\le t\le c_KLn^{d/2}$,
\begin{equation}\label{eq:bounded-full}
 \Pp\{\abs{Z-\E Z}>t\}
 \le C\exp\left(-\frac{c_Kt^2}{L^2d^2n^{d-1}}\right).
\end{equation}
The variance scale $d^2n^{d-1}$ is sharp for nontrivial moderate deviations.
This already holds for one fixed bounded marginal law with an absolute support
bound.
\end{theorem}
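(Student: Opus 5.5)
We plan to use Talagrand's convex distance inequality for bounded product measures in place of the subgaussian coupling of Section~\ref{sec:coupling}, keeping the same level-set comparison and the same tensor geometry. Talagrand's inequality \cite[Theorem~4.1.1]{Talagrand}, in its dual (coupling) form, reads as follows. For a measurable set $A\subset[-K,K]^n$ and a point $x$, there is a probability measure $\nu_x$ on $A$ such that
\[
 \sum_i\Pp_{Y\sim\nu_x}\{Y_i\ne x_i\}^2\le d_T(x,A)^2,
\]
and $\Pp(A)\,\Pp\{d_T(X,A)\ge s\}\le e^{-s^2/4}$. Since $|x_i-y_i|\le2K\1\{x_i\ne y_i\}$, this gives
\[
 q_i=\int|x_i-y_i|\,d\nu_x\le2K\,\nu_x\{y_i\ne x_i\},\qquad \norm q_2\le 2K d_T(x,A).
\]
This is the logarithm-free analogue of \eqref{eq:product-weak-X}.

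For the squared distance we also get $\int\norm{x-y}^2\,d\nu_x\le4K^2\sum_i\nu_x\{y_i\ne x_i\}$. However, this is controlled only by $\sqrt n\,d_T$ via Cauchy--Schwarz, which is too weak. We therefore avoid the quadratic term by not using \Cref{lem:one-sphere}. Instead we use the ball estimate \eqref{eq:first-order-ball}, which needs only $\norm x,\norm y\le R$ and costs the factor $d$ instead of $\sqrt d$:
\[
 \norm{x^{\otimes d}-\int y^{\otimes d}\,d\nu_x}\le dR^{d-1}\norm q_2\le 2KdR^{d-1}d_T(x,A).
\]
Since the target rate already carries the factor $d^2$, the loss from $\sqrt d$ to $d$ is harmless. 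This is the key simplification compared with the subgaussian case.

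The steps are as follows. Fix $u\ge1$ with $u\le c_Kn/d^2$. Set $R=\sqrt n+C_K\sqrt u$ and $G=\{\norm x\le R\}$, so that $\Pp(G^c)\le2e^{-c_Ku}$ by \Cref{lem:shell} (bounded variables are subgaussian). Let $M$ be a median and put
\[
 A=G\cap\{Z\le M\},\qquad \Pp(A)\ge1/4.
\]
Suppose $x\in G$ has $d_T(x,A)\le s=C\sqrt u$. Then $\nu_x$ is supported in $A$, so by convexity $f(\int y^{\otimes d}\,d\nu_x)\le M$, and the Lipschitz property gives
\[
 f(x^{\otimes d})\le M+2KLdR^{d-1}s.
\]
Consequently
\[
 \Pp\{Z>M+CKLdR^{d-1}\sqrt u\}\le\Pp(G^c)+4e^{-s^2/4}\le Ce^{-c_Ku}.
\]
For the lower tail we apply the same argument with the roles exchanged: take $A=G\cap\{Z\le M-t\}$, and if $\Pp(A)\ge e^{-u}$, then most points of $\{Z\ge M\}\cap G$ are within $d_T$-distance $C\sqrt u$ of $A$, which forces $t\le CKLdR^{d-1}\sqrt u$. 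When $d\sqrt{u/n}\le c$ we have $R^{d-1}\le Cn^{(d-1)/2}$. Hence
\[
 \Pp\{|Z-M|>C_KLdn^{(d-1)/2}\sqrt u\}\le Ce^{-c_Ku}\qquad\text{for }1\le u\le c_Kn/d^2.
\]

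To finish, we pass from the median to the mean and from $u$ to $t$. For $u>c_Kn/d^2$ we use the crude bound $\norm x\le K\sqrt n$, which always holds, together with $R\le K\sqrt n$; the same argument then gives the tail at every $u$ up to $n$ with the factor $K^{d-1}$. This factor is the main obstacle in the large-$u$ range, as in \Cref{lem:tail-to-moments}. We plan to handle it either by integrating the tail only over the range $u\le c_Kn/d^2$ and absorbing the rest through $e^{-c_Kn/d^2}$, exactly as in that lemma, or by observing that the natural range $t\le c_KLn^{d/2}$ corresponds to $u\lesssim n/d^2$, so large $u$ is never needed for \eqref{eq:bounded-full}. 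Integrating the tail gives $|\E Z-M|\le C_KLdn^{(d-1)/2}$. Then setting $u=c\,t^2/(L^2d^2n^{d-1})$, which lies in the allowed range because $t\le c_KLn^{d/2}$, yields \eqref{eq:bounded-full}, with $u<1$ absorbed into the constant $C$.

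For sharpness, take Rademacher coordinates and $f(T)=\norm T$, so that $Z=\norm X^d=n^{d/2}$ is deterministic and useless. We therefore use a fixed bounded law with a nondegenerate square, for instance $X_i$ uniform on $\{0,\pm\sqrt2\}$ with the correct probabilities. Then $\norm X^2$ has fluctuations of order $\sqrt n$, and a moderate deviation lower bound for $\norm X^2$ at level $\sqrt{nu}$, with probability at least $e^{-Cu}$, yields
\[
 \norm X^d-n^{d/2}\gtrsim dn^{(d-1)/2}\sqrt u,
\]
mirroring \Cref{prop:radial-lower}.
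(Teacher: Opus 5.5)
\textbf{Gap in the median-to-mean step.} Your core argument matches the paper's: Talagrand's convex distance on $[-K,K]^n$, the telescoping ball estimate with factor $d$ (so no quadratic term is needed), restriction to $\{\norm x\le R\}$, and the law on $\{0,\pm\sqrt2\}$ with $f(T)=\norm T$ for sharpness. Your median tail for $1\le u\le c_Kn/d^2$ is fine; enlarge the radius constant so that $\Pp(G^c)\le1/4$ for all $u\ge1$. The gap is in passing from the median to the mean. Neither of your two options gives $\abs{\E Z-M}\le C_KLdn^{(d-1)/2}$ on the whole range $d\le c_K\sqrt n$.

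Your second option does not address the issue. The restriction $t\le c_KLn^{d/2}$ limits which tail you must \emph{state}, but $\E\abs{Z-M}$ integrates the entire tail.

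Your first option fails quantitatively. Put $u_*=c_Kn/d^2$ and $t_*=C_KLdn^{(d-1)/2}\sqrt{u_*}$. Beyond $t_*$, the crude bound $\norm x\le K\sqrt n$ only controls deviations up to about $K^{d}t_*$, and in between you have nothing better than probability $e^{-c_Ku_*}$. Your bound on the remainder is therefore of order $K^{d}e^{-c_Kn/d^2}\cdot Ldn^{(d-1)/2}$. This is $O(Ldn^{(d-1)/2})$ only when $d^3\log K\lesssim n$. For instance, with $K>1$ fixed and $d=n^{2/5}$ it diverges. Yet the theorem is nontrivial there, since the exponent at the top of the natural range is $\asymp n/d^2\to\infty$.

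This is also not ``exactly as in'' \Cref{lem:tail-to-moments}. There the crude bound is used only on $\{U>\eta_Kn\}$, where $e^{-c_Kn}$ beats $C_K^d$. The intermediate range $n/d^2\lesssim U\lesssim n$ is handled with a growing radius and an exponential moment.

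\textbf{Fix.} Your Talagrand argument used $u\le c_Kn/d^2$ only to simplify $R^{d-1}$. Since \Cref{lem:shell} holds for every $u\ge1$, take $R_u=\sqrt n+C_K\sqrt u$. This gives
$\Pp\{\abs{Z-M}>C_KLdR_u^{d-1}\sqrt u\}\le Ce^{-c_Ku}$ for all $u\ge1$. Next use $R_u^{d-1}\le n^{(d-1)/2}\exp(C_Kd\sqrt{u/n})$ and a quantile coupling with a variable $U$ having an exponential tail. You get
$\E\abs{Z-M}\le C_KLdn^{(d-1)/2}\,\E\bigl[\sqrt U\exp(C_Kd\sqrt{U/n})\bigr]\le C_KLdn^{(d-1)/2}$
whenever $d\le c_K\sqrt n$, because $\sqrt U$ is subgaussian. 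For $d>c_K\sqrt n$ the statement is trivial in the natural range. This is exactly how the paper concludes. With this repair, the rest of your plan (median tail, conversion $u\leftrightarrow t$, sharpness) goes through.
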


\begin{proof}
Let $\mu_i$ be the law of $X_i$, and put
\[
 \Omega_i=[-K,K],\qquad \Omega=\prod_{i=1}^n\Omega_i,
 \qquad \Pp=\mu_1\ot\cdots\ot\mu_n.
\]
Let $M$ be a median. We use Talagrand's convex-distance inequality
\cite[Section~4.1, Theorem~4.1.1]{Talagrand} on $\Omega$. For $x,y\in\Omega$,
associate with $y$ the vector
\[
 \big(\1_{\{x_1\ne y_1\}},\ldots,\1_{\{x_n\ne y_n\}}\big)\in\{0,1\}^n.
\]
For $A\subset\Omega$,
\[
 d_T(x,A)=\inf_{\nu\in\cP_f(A)}
 \left(\sum_i\nu\{y:y_i\ne x_i\}^2\right)^{1/2},
\]
where $\cP_f(A)$ denotes the probability measures on $A$ with finite support.
Talagrand's theorem states that
\[
 \Pp(A)\Pp\{d_T(X,A)\ge r\}\le e^{-r^2/4}.
\]

Fix \(R>0\), let $x\in\Omega\cap RB_2^n$, and let $\nu$ be supported on
$\Omega\cap RB_2^n$. Starting from \eqref{eq:tensor-telescoping}, write the integrated
\(j\)th summand as
\[
 T_j=\int x^{\otimes(j-1)}\ot(x-y)\ot y^{\otimes(d-j)}\,d\nu(y).
\]
Expanding \(x-y=\sum_i(x_i-y_i)e_i\), the terms with different \(i\) are
orthogonal in the \(j\)th tensor slot. Hence
\begin{align*}
 \norm{T_j}^2
 &\le R^{2d-2}\sum_i
 \left(\int|x_i-y_i|\,d\nu(y)\right)^2\\
 &\le4K^2R^{2d-2}\sum_i\nu\{y:y_i\ne x_i\}^2,
\end{align*}
because \(|x_i-y_i|\le2K\) whenever \(x_i\ne y_i\). Summing over
\(j=1,\ldots,d\) gives
\begin{equation}\label{eq:bounded-mixture}
 \norm{x^{\otimes d}-\int y^{\otimes d}\,d\nu(y)}
 \le2KdR^{d-1}
 \left(\sum_i\nu\{y:y_i\ne x_i\}^2\right)^{1/2}.
\end{equation}

For every tensor \(T\), convexity and the \(L\)-Lipschitz property imply
that there is a vector \(v\) with \(\norm v\le L\) such that
\[
 f(S)\ge f(T)+\ip{v}{S-T}
 \qquad\text{for every }S.
\]
Consider, for example, $x\in\Omega$ with \(\norm x\le R\) and
\(f(x^{\otimes d})\ge M+t\). Let
\[
 A_R=\{y\in\Omega:\norm y\le R,\ f(y^{\otimes d})\le M\}.
\]
For every probability measure \(\nu\) with finite support in \(A_R\),
convexity at \(T=x^{\otimes d}\) gives
\begin{align*}
 t
 &\le f(x^{\otimes d})-\int f(y^{\otimes d})\,d\nu(y)\\
 &\le\ip{v}{x^{\otimes d}-\int y^{\otimes d}\,d\nu(y)}\\
 &\le2KLdR^{d-1}
 \left(\sum_i\nu\{y:y_i\ne x_i\}^2\right)^{1/2}.
\end{align*}
Taking the infimum over such \(\nu\) gives
\[
 d_T(x,A_R)\ge\frac{t}{2KLdR^{d-1}}
 \qquad\text{whenever }\norm x\le R\text{ and }f(x^{\otimes d})\ge M+t.
\]
If $\Pp\{\norm X>R\}<1/4$, then
\[
 \Pp(A_R)\ge\frac14.
\]
Therefore
\[
 \Pp\{\norm X\le R,\ Z\ge M+t\}
 \le4\exp\left(-\frac{t^2}{16K^2L^2d^2R^{2d-2}}\right).
\]
For the lower tail, put
\begin{align*}
 A_R^-&=\{y\in\Omega:\norm y\le R,\ f(y^{\otimes d})\le M-t\},\\
 B_R&=\{x\in\Omega:\norm x\le R,\ f(x^{\otimes d})\ge M\}.
\end{align*}
If $x\in B_R$ and $\nu$ is supported on $A_R^-$, then
\[
 t\le2KLdR^{d-1}
 \left(\sum_i\nu\{y:y_i\ne x_i\}^2\right)^{1/2},
\]
so
\[
 d_T(x,A_R^-)\ge\frac{t}{2KLdR^{d-1}}.
\]
Also $\Pp(B_R)\ge1/4$. Hence
\[
 \Pp(A_R^-)
 \le4\exp\left(-\frac{t^2}{16K^2L^2d^2R^{2d-2}}\right).
\]
If $\Pp\{\norm X>R\}\ge1/4$, then $4\Pp\{\norm X>R\}\ge1$. Thus
\begin{equation}\label{eq:bounded-local}
 \Pp\{\abs{Z-M}>t\}
 \le4\Pp\{\norm X>R\}
 +8\exp\left(-\frac{c_Kt^2}{L^2d^2R^{2d-2}}\right).
\end{equation}

Since \(|X_i|\le K\), the variables \(X_i^2-1\) are centered and
uniformly bounded. Bernstein's inequality therefore gives, for every
\(u\ge1\),
\[
 \Pp\left\{\norm X^2>
 n+C_K(\sqrt n\,u+u^2)\right\}
 \le2e^{-c_Ku^2}.
\]
Choose this radius in \eqref{eq:bounded-local}. If
\[
 1\le u\le c_K\frac{\sqrt n}{d},
\]
then, with this choice of $R$,
\[
 \left(\frac{R}{\sqrt n}\right)^{d-1}
 \le \exp\left[C_Kd\left(\frac{u}{\sqrt n}
              +\frac{u^2}{n}\right)\right]
 \le C_K.
\]
Thus $R^{d-1}\le C_Kn^{(d-1)/2}$ throughout this range, and hence
\[
 \Pp\left\{\abs{Z-M}>
 C_KLd n^{(d-1)/2}u\right\}
 \le Ce^{-c_Ku^2}.
\]
For $u\ge1$, put
\[
 R=\sqrt n+C_Ku.
\]
Bernstein's inequality gives
\[
 \Pp\{\norm X>R\}\le2e^{-c_Ku^2}.
\]
Substituting this $R$ and
\[
 t=C_KLd\,u(\sqrt n+C_Ku)^{d-1}
\]
in \eqref{eq:bounded-local} gives
\[
 \Pp\left\{\abs{Z-M}>
 C_KLd\,u(\sqrt n+C_Ku)^{d-1}\right\}
 \le Ce^{-c_Ku^2},\qquad u\ge1.
\]
Let $V$ be uniform on $(0,1)$ and put
$U=C_K\sqrt{\log(e/V)}$. If
\[
 R^*(v)=\inf\{t\ge0:\Pp\{\abs{Z-M}>t\}\le v\},
\]
then $R^*(V)$ has the same distribution as $\abs{Z-M}$. After increasing
$C_K$ in the definition of $U$,
\[
 R^*(V)\le C_KLd\,U(\sqrt n+C_KU)^{d-1}.
\]
If $d\le c_K\sqrt n$, then
\[
 (\sqrt n+C_KU)^{d-1}
 \le n^{(d-1)/2}\exp(C_KdU/\sqrt n).
\]
The random variable \(U=C_K\sqrt{\log(e/V)}\) has a subgaussian tail. If
\(d\le c_K\sqrt n\), its exponential moment satisfies
\[
 \E\left[U\exp(C_KdU/\sqrt n)\right]\le C_K.
\]
Therefore
\[
 \abs{\E Z-M}
 \le\E\abs{Z-M}
 \le C_KLd n^{(d-1)/2}.
\]
For
\[
 1\le u\le c_K\frac{\sqrt n}{d},
\]
after increasing $C_K$,
\[
 \Pp\left\{\abs{Z-\E Z}>
 C_KLd n^{(d-1)/2}u\right\}
 \le Ce^{-c_Ku^2}.
\]
Set
\[
 u=\frac{t}{C_KLd n^{(d-1)/2}}.
\]
After reducing the constant in the natural range,
\[
 u\le c_K\frac{\sqrt n}{d}.
\]
If $u\ge1$, the centered estimate gives \eqref{eq:bounded-full}. If
$u<1$, then $t^2/(L^2d^2n^{d-1})$ is bounded by a constant depending only
on $K$. Reduce the constant in the exponent and increase $C$ so that the
right side of \eqref{eq:bounded-full} is at least one.

It remains to justify the sharpness statement. Take, for example, the fixed
law
\[
 \Pp\{X_i=0\}=\frac12,\qquad
 \Pp\{X_i=\sqrt2\}=\Pp\{X_i=-\sqrt2\}=\frac14.
\]
It is centered, has variance one, is bounded by $\sqrt2$, and satisfies
$\Var(X_i^2)=1$. By the central limit theorem, there are absolute constants
$a,c_0>0$ such that, for all sufficiently large $n$,
\[
 \Pp\left\{\norm X^2\ge n+a\sqrt n\right\}\ge c_0,
 \qquad
 \Pp\left\{\norm X^2\le n-a\sqrt n\right\}\ge c_0.
\]
For $d\le c\sqrt n$, the mean-value theorem gives
\[
 (n+a\sqrt n)^{d/2}-(n-a\sqrt n)^{d/2}
 \ge c d n^{(d-1)/2}.
\]
Taking $f(T)=\norm T$, the two events above therefore place
$f(X^{\otimes d})=\norm X^d$ in two sets separated by
$c d n^{(d-1)/2}$, each with probability at least $c_0$. For every center
$m\in\R$, one of the two sets is at distance at least half this amount from
$m$. This proves sharpness of the variance scale in the nontrivial
moderate-deviation regime; for larger $d$ the natural range contains no
nontrivial fixed-exponent moderate regime.
\end{proof}

\begin{corollary}\label{cor:high-degree}
Under the assumptions of \Cref{thm:main}, let
\[
 1\le p\le c_K\frac n{d^2}.
\]
If
\[
 d\ge\log(e+n/p),
\]
then
\[
 \norm{Z-\E Z}_{L_p}
 \le C_KLd n^{(d-1)/2}\sqrt p.
\]
This order is sharp over a subgaussian class with an absolute subgaussian
bound, even for the Euclidean functional $f(T)=\norm T$.
\end{corollary}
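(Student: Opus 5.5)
The upper bound will follow directly from \eqref{eq:main-moment} in \Cref{thm:main}; no new probabilistic input is needed. The sharpness claim will come from the radial Gaussian-type example, $f(T)=\norm T$.

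\emph{Upper bound.} First check that \Cref{thm:main} applies. Since $p\ge1$, the range $1\le p\le c_Kn/d^2$ forces $d^2\le c_Kn$, so $d\le c_K^{1/2}\sqrt n$. After adjusting constants this is the hypothesis $d\le c_K\sqrt n$ of the moment estimate. Then \eqref{eq:main-moment} gives
\[
 \norm{Z-\E Z}_{L_p}\le C_KLn^{(d-1)/2}\left(d\sqrt p+\sqrt{dp\log(e+n/p)}\right).
\]
Under $d\ge\log(e+n/p)$ we have
\[
 dp\log(e+n/p)\le d^2p,
\]
so the second term is at most $d\sqrt p$, and the bound collapses to $C_KLdn^{(d-1)/2}\sqrt p$.

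\emph{Sharpness.} I will show that for $f(T)=\norm T$ and suitable coordinates,
\[
 \norm{\,\norm X^d-\E\norm X^d}_{L_p}\ge c\,dn^{(d-1)/2}\sqrt p .
\]
The plan is to invoke \eqref{eq:lower-radial} of \Cref{thm:lower}, whose proof uses Gaussian $X$ with $f(T)=\norm T$. Its range $p\le cn/d^2$ matches ours after shrinking $c_K$ to the absolute constant there, and I assume $n\ge n_0$. With $m=\E Z$, \eqref{eq:lower-radial} gives
\[
 \Pp\{\abs{Z-\E Z}\ge cdn^{(d-1)/2}\sqrt p\}\ge e^{-Cp}.
\]
This bound is in probability, not in $L_p$, so it must be converted. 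Markov's inequality gives
\[
 \norm{Z-\E Z}_{L_p}\ge cdn^{(d-1)/2}\sqrt p\,e^{-C}.
\]
This is because $(\E\abs{W}^p)^{1/p}\ge a\,\Pp\{\abs W\ge a\}^{1/p}\ge a e^{-C}$ with $W=Z-\E Z$ and $a=cdn^{(d-1)/2}\sqrt p$. That matches the upper bound. The Gaussian coordinates have $\psi_2$ norm bounded by an absolute constant, so sharpness holds over the class with absolute subgaussian bound, for the Euclidean functional. For small $n<n_0$, or if one wants a self-contained argument, I would redo the radial computation directly. Using $\Pp\{\norm g^2\ge n+c\sqrt{np}\}\ge e^{-Cp}$ for $p\le cn$, the mean value theorem gives
\[
 (n+c\sqrt{np})^{d/2}-n^{d/2}\ge cd\,n^{(d-1)/2}\sqrt p ,
\]
valid because $d\sqrt{p/n}\le c$. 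The lower tail $\norm g^2\le n-c\sqrt{np}$ handles centers $m$ that lie above.

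\emph{Main obstacle.} The only delicate point is the compatibility of ranges. The corollary requires both $d\ge\log(e+n/p)$ and $p\le c_Kn/d^2$, so I must check that the lower-bound regime of \Cref{thm:lower} contains such pairs and that its constants are absolute. The lower-bound proposition is stated for every $p\le cn/d^2$ with no condition relating $d$ and $\log$, so restricting to $d\ge\log(e+n/p)$ is harmless.
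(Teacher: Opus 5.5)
Your proposal is correct and matches the paper's proof essentially step for step. The upper bound comes from \eqref{eq:main-moment} after noting $d^2\le c_Kn$ and $\sqrt{dp\log(e+n/p)}\le d\sqrt p$. Sharpness comes from the Gaussian radial example with $m=\E\norm{G}^d$, followed by the conversion $\norm{W}_{L_p}\ge a\,\Pp\{\abs W\ge a\}^{1/p}$. The only cosmetic difference is that the paper cites \Cref{prop:radial-lower} directly, which has no $n\ge n_0$ restriction, so your fallback argument for small $n$ is unnecessary.
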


\begin{proof}
Since $p\ge1$ and $p\le c_Kn/d^2$,
\[
 d^2\le c_Kn.
\]
After reducing the constant in the corollary, the degree condition in
\Cref{thm:main} holds. Also,
\[
 \sqrt{dp\log(e+n/p)}\le d\sqrt p
\]
when $d\ge\log(e+n/p)$. Thus \eqref{eq:main-moment} gives the upper
bound.

Reduce the constant in the corollary further, if necessary, so that the
range in \Cref{prop:radial-lower} also holds. For sharpness, take
$X=G\sim N(0,I_n)$ and $f(T)=\norm T$. With $m=\E\norm G^d$, \Cref{prop:radial-lower} gives
\[
 \Pp\left\{
 \abs{\norm G^d-\E\norm G^d}
 \ge c d n^{(d-1)/2}\sqrt p
 \right\}\ge e^{-Cp}.
\]
Therefore
\[
 \norm{\norm G^d-\E\norm G^d}_{L_p}
 \ge c d n^{(d-1)/2}\sqrt p\,e^{-C}
 \ge c d n^{(d-1)/2}\sqrt p.
\]
\end{proof}

\begin{corollary}\label{cor:euclidean}
Let $A:H_d\to\mathcal H$ be a bounded operator into a Hilbert space and put
\[
 Y=\norm{AX^{\otimes d}}_{\mathcal H},
 \qquad
 \sigma_A=\big(\E Y^2\big)^{1/2}.
\]
If $A\ne0$, then \Cref{thm:full-range} applies with
$L=\norm{A}_{\mathrm{op}}$; if $A=0$, all conclusions below are trivial. If
$d\le c_K\sqrt n$, then
\begin{equation}\label{eq:rms-shift}
 \abs{\sigma_A-\E Y}
 \le C_K\norm{A}_{\mathrm{op}}n^{(d-1)/2}
 \left(d+\sqrt{d\log(en)}\right).
\end{equation}
Also,
\[
 \sigma_A^2=\operatorname{Tr}(A^*A\Sigma_d),
\]
where $\Sigma_d:H_d\to H_d$ is the second-moment operator
\[
 \Sigma_d T
 =\E\left[\ip{X^{\otimes d}}{T}X^{\otimes d}\right],
 \qquad T\in H_d.
\]
\end{corollary}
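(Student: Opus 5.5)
The corollary makes three claims, and I would check them one at a time.

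\emph{Applicability of \Cref{thm:full-range}.} Put $g(T)=\norm{AT}_{\mathcal H}$. It is convex, since it is a norm composed with a linear map. It is also $\norm{A}_{\mathrm{op}}$-Lipschitz, because
\[
 \abs{\norm{AS}-\norm{AT}}\le\norm{A(S-T)}\le\norm{A}_{\mathrm{op}}\norm{S-T}.
\]
Since $Y=g(X^{\otimes d})$, \Cref{thm:main} and \Cref{thm:full-range} apply with $L=\norm{A}_{\mathrm{op}}$ whenever $A\ne0$. If $A=0$, then $Y\equiv0$, so $\sigma_A=\E Y=0$ and every conclusion is trivial.

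\emph{The shift bound \eqref{eq:rms-shift}.} Since $Y\ge0$, we have $\sigma_A^2-(\E Y)^2=\Var Y$ and $\sigma_A\ge\E Y$. Therefore
\[
 0\le\sigma_A-\E Y=\frac{\Var Y}{\sigma_A+\E Y}\le\frac{\Var Y}{\sigma_A}\le\sqrt{\Var Y}.
\]
The last inequality holds because $\Var Y\le\sigma_A^2$. It remains to bound $\sqrt{\Var Y}=\norm{Y-\E Y}_{L_2}$. Apply the moment estimate \eqref{eq:main-moment} of \Cref{thm:main} with $p=2$; this needs $d\le c_K\sqrt n$, which is the stated hypothesis, and $2\le c_Kn/d^2$. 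The result is
\[
 \norm{Y-\E Y}_{L_2}\le C_K\norm{A}_{\mathrm{op}}n^{(d-1)/2}\bigl(d+\sqrt{d\log(e+n/2)}\bigr).
\]
Since $\log(e+n/2)\le\log(en)$, this is \eqref{eq:rms-shift}.

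The only real care is at the boundary. If $p=2$ exceeds the admissible range, then $n/d^2$ is bounded by a constant depending on $K$. In that case I would either use $p=1$ together with Hölder, or shrink $c_K$ in the degree condition so that $2\le c_Kn/d^2$ always holds. The cleanest choice is to reduce the constant in $d\le c_K\sqrt n$ so that the main theorem's range automatically contains $p=2$.

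\emph{The trace identity.} Write $\sigma_A^2=\E\norm{AX^{\otimes d}}^2=\E\ip{A^*AX^{\otimes d}}{X^{\otimes d}}$. Take an orthonormal basis $(E_k)$ of $H_d$; it is finite, of dimension $n^d$. Then
\[
 \norm{AT}^2=\sum_k\ip{T}{E_k}\ip{A^*AT}{E_k}=\sum_k\ip{T}{E_k}\ip{T}{A^*AE_k}.
\]
Substitute $T=X^{\otimes d}$ and exchange expectation with the finite sum; the exchange is justified because $X^{\otimes d}$ has finite second moment, as the coordinates are subgaussian. This gives
\[
 \sigma_A^2=\sum_k\ip{\Sigma_dE_k}{A^*AE_k}=\sum_k\ip{A^*A\Sigma_dE_k}{E_k}=\operatorname{Tr}(A^*A\Sigma_d),
\]
where the middle equality uses that $A^*A$ is self-adjoint.

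No step here is a genuine obstacle. The one thing to watch is the bookkeeping of constants, so that $p=2$ lies in the range where \Cref{thm:main} applies.
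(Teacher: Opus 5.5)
Your proposal is correct and matches the paper's proof step for step: $T\mapsto\norm{AT}_{\mathcal H}$ is convex and $\norm{A}_{\mathrm{op}}$-Lipschitz, $0\le\sigma_A-\E Y\le\norm{Y-\E Y}_{L_2}$, the moment estimate \eqref{eq:main-moment} is applied at $p=2$, and the trace identity comes from expanding the Hilbert norm in a basis. One aside: the $p=1$-plus-H\"older fallback you mention would not help, since H\"older bounds $L_1$ by $L_2$ and not conversely. Shrinking the constant in $d\le c_K\sqrt n$ so that $2\le c_Kn/d^2$, as you ultimately choose, is the right way to handle the range.
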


\begin{proof}
The map $T\mapsto\norm{AT}_{\mathcal H}$ is convex and
$\norm{A}_{\mathrm{op}}$-Lipschitz. The identity for $\sigma_A$ follows by
expanding the Hilbert norm. Finally,
\[
 0\le\sigma_A-\E Y
 \le\big(\Var Y\big)^{1/2}
 =\norm{Y-\E Y}_{L_2}.
\]
The moment estimate in \Cref{thm:main}, taken at $p=2$, gives \eqref{eq:rms-shift}.
\end{proof}

\section{Sharpness and optimality}

We prove the two parts of \Cref{thm:lower} separately.

\medskip
\noindent\textit{Lower bound for the first term.}

\begin{proposition}\label{prop:radial-lower}
Let $G\sim N(0,I_n)$ and put $f(T)=\norm T$. If
$1\le p\le cn/d^2$, then, for every $m\in\R$,
\[
 \Pp\left\{
 \abs{f(G^{\otimes d})-m}
 \ge cd n^{(d-1)/2}\sqrt p
 \right\}
 \ge e^{-Cp}.
\]
\end{proposition}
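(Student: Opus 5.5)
Since $f(G^{\otimes d})=\norm G^d=(\norm G^2)^{d/2}$, the statement reduces to a two-sided lower deviation estimate for the chi-square variable $S=\norm G^2=\sum_{i=1}^n G_i^2$ at scale $\sqrt{np}$, transported through the increasing map $s\mapsto s^{d/2}$. The plan is to exhibit two events, one with $S\ge n+a\sqrt{np}$ and one with $S\le n-a\sqrt{np}$, each of probability at least $e^{-Cp}$. On these events $\norm G^d$ takes values in two sets separated by a gap of order $dn^{(d-1)/2}\sqrt p$. Then, for every $m\in\R$, at least one of the two sets lies at distance at least half the gap from $m$. This is the same device used in the sharpness part of \Cref{thm:bounded-full}, except that there $p=1$ and the central limit theorem suffices.

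\textbf{Step 1: lower deviations for $S$.} Fix a small absolute $a>0$. We need
\[
\Pp\{S\ge n+a\sqrt{np}\}\ge e^{-Cp},\qquad \Pp\{S\le n-a\sqrt{np}\}\ge e^{-Cp}
\]
for $1\le p\le cn$. One route is to split the coordinates into $p$ blocks of size $k=\lfloor n/p\rfloor$ (the leftover coordinates only need a constant-probability event). On each block, the CLT, or a Paley--Zygmund argument applied to the block sum of $G_i^2-1$, gives a block excess of at least $a'\sqrt k$ with probability at least some absolute $c_0>0$, uniformly in $k\ge k_0$. Independence across blocks then gives probability at least $c_0^p=e^{-Cp}$ for a total excess of at least $a'p\sqrt k\asymp a'\sqrt{np}$. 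The lower deviation is handled identically. Alternatively, one can use the explicit Gamma density of $S$ and a Stirling estimate, or tilt the measure to $N(0,(1\pm\varepsilon)I_n)$ with $\varepsilon=\sqrt{p/n}$ and bound the entropy cost by $Cp$. I expect this step to be the main obstacle, only because of its uniformity requirements: the block argument needs $k\ge k_0$, which follows from $p\le cn$ with $c$ small, and it needs to control the leftover block. Both requirements are routine.

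\textbf{Step 2: transfer to the $d/2$ power.} By the mean value theorem,
\[
(n+a\sqrt{np})^{d/2}-(n-a\sqrt{np})^{d/2}\ge \frac d2\,\xi^{d/2-1}\cdot 2a\sqrt{np},
\]
where $\xi\ge n-a\sqrt{np}\ge n/2$. The case $d=1$ needs a separate look, since then the exponent is negative and $\xi\le n+a\sqrt{np}\le 2n$ is the relevant bound. Because $p\le cn/d^2$, we have $(1\pm a\sqrt{p/n})^{d/2}\asymp1$, as $d\sqrt{p/n}\le\sqrt c$. Hence $\xi^{d/2-1}\asymp n^{d/2-1}$, and the gap is at least $c\,dn^{(d-1)/2}\sqrt p$.

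\textbf{Step 3: conclusion.} Given $m$, at least one of the two events places $\norm G^d$ at distance at least half this gap from $m$. Reducing the constant $c$ gives the claim.
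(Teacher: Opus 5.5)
Your proposal is correct. Step 1 takes a different route from the paper; Steps 2 and 3 match it.

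\textbf{How the paper does it.} The paper works with $R=\norm G$ directly. It uses the explicit chi density $\varphi_n(r)\propto r^{n-1}e^{-r^2/2}$ and Stirling's formula to get $\varphi_n(r_0)\asymp1$ at the mode $r_0=\sqrt{n-1}$. It also uses the local estimate $\log(\varphi_n(r_0\pm s)/\varphi_n(r_0))\ge -Cs^2$ for $0\le s\le c_0\sqrt n$. Integrating over unit intervals at distance $\sqrt p$ from the mode gives $\Pp\{R\ge r_0+\sqrt p\}\ge e^{-Cp}$ and $\Pp\{R\le r_0-\sqrt p\}\ge e^{-Cp}$. Your second suggested alternative, Gamma density plus Stirling, is essentially this proof.

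\textbf{How yours differs.} Your block argument gets the two deviations of $\norm G^2$ at scale $\sqrt{np}$ another way. You split into $p$ independent blocks and multiply constant-probability block events. The leftover coordinates are harmless because there are fewer than $p\le\sqrt c\,\sqrt{np}$ of them. For the upper deviation this holds deterministically, since their contribution to $\norm G^2-n$ is at least $-p$. For the lower deviation, Markov's inequality gives a constant-probability event on which they add at most $2p$. Your Step 2 is the paper's mean-value transfer, applied to $s\mapsto s^{d/2}$ instead of $r\mapsto r^d$; that is why you need the separate look at $d=1$.

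\textbf{What each approach buys.} The paper's argument is short and explicit but is tied to the Gaussian density. Yours needs only a lower bound, uniform in the block size, for both tails of block sums of $X_i^2-1$ at level $a'\sqrt k$. The CLT or Paley--Zygmund with bounded kurtosis supplies this. So your argument applies verbatim to any i.i.d.\ coordinates with $\Var(X_i^2)>0$ and a finite fourth moment. For example, take the three-point law in the sharpness part of \Cref{thm:bounded-full}, where $X_i^2-1$ is a Rademacher variable. There your argument extends the paper's constant-probability sharpness claim to deviations of order $d n^{(d-1)/2}\sqrt p$ with probability $e^{-Cp}$, for all $1\le p\le cn/d^2$.
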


\begin{proof}
The range $1\le p\le cn/d^2$ is nonempty only if
\[
 n\ge c^{-1}d^2.
\]
After reducing $c$, we may assume $n\ge4$. Let $R=\norm G$. Its density is
\[
 \varphi_n(r)=\frac{2^{1-n/2}}{\Gamma(n/2)}r^{n-1}e^{-r^2/2}.
\]
Its mode is \(r_0=\sqrt{n-1}\). Stirling's formula gives
\(\varphi_n(r_0)\asymp1\). There is an absolute $c_0>0$ such that,
for $0\le s\le c_0\sqrt n$,
\begin{align*}
 \log\frac{\varphi_n(r_0+s)}{\varphi_n(r_0)}
 &=(n-1)\log\left(1+\frac{s}{r_0}\right)-r_0s-\frac{s^2}{2}
 \ge-Cs^2,\\
 \log\frac{\varphi_n(r_0-s)}{\varphi_n(r_0)}
 &=(n-1)\log\left(1-\frac{s}{r_0}\right)+r_0s-\frac{s^2}{2}
 \ge-Cs^2.
\end{align*}
Reduce the constant $c$ in the proposition so that, for
$1\le p\le cn$,
\[
 \sqrt p+1\le c_0\sqrt n.
\]
Integrating over $[r_0+\sqrt p,r_0+\sqrt p+1]$ and
$[r_0-\sqrt p-1,r_0-\sqrt p]$ gives
\[
 \Pp\{R\ge r_0+\sqrt p\}\ge e^{-Cp},
 \qquad
 \Pp\{R\le r_0-\sqrt p\}\ge e^{-Cp}.
\]
Moreover,
\[
 \frac{r_0-\sqrt p}{\sqrt n}
 \ge 1-\frac{C}{n}-\sqrt{\frac pn}
 \ge 1-\frac{C\sqrt c}{d}.
\]
Reduce $c$ further so that $C\sqrt c\le1/2$. Then
\[
 \left(1-\frac{C\sqrt c}{d}\right)^{d-1}
 \ge e^{-2C\sqrt c}\ge e^{-1},
\]
and hence
\[
 (r_0-\sqrt p)^{d-1}\ge c n^{(d-1)/2}.
\]
Therefore
\[
 (r_0+\sqrt p)^d-(r_0-\sqrt p)^d
 \ge cd n^{(d-1)/2}\sqrt p.
\]
Since $f(G^{\otimes d})=R^d$, one of the two events is at distance at least
half this gap from any fixed $m$.
\end{proof}

\medskip
\noindent\textit{Lower bound for the second term.}

\begin{proposition}\label{prop:second-lower}
There are absolute constants $K_0,c,C>0$ such that the following holds.
Let $p$ be an integer such that
\[
 1\le p\le cn,
 \qquad
 dp\log(en/p)\le cn.
\]
There are independent centered variance-one random variables $X_i$ such that
\[
 \norm{X_i}_{\psi_2}\le K_0\qquad(1\le i\le n),
\]
and there is a convex one-Lipschitz function $f$ such that, for every
$m\in\R$,
\[
 \Pp\left\{
 \abs{f(X^{\otimes d})-m}
 \ge c n^{(d-1)/2}\sqrt{dp\log(en/p)}
 \right\}
 \ge e^{-Cp}.
\]
\end{proposition}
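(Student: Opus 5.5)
The plan is to realize the second term by coordinates with rare large values, and to use a convex functional that measures only the component of $X^{\otimes d}$ having exactly one slot on the large coordinates. By \Cref{lem:differential}, that component moves with the factor $\sqrt d\,\norm x^{d-1}$ rather than $d\,\norm x^{d-1}$. Put $\lambda=\log(en/p)$. I would take i.i.d.\ symmetric $X_i$ with
\[
 \Pp\{X_i=\pm a\}=\frac{\delta}{2},\qquad \Pp\{X_i=\pm b\}=\frac{1-\delta}{2},
 \qquad a=A\sqrt\lambda,\quad \delta=\frac pn,
\]
where $A$ is a large absolute constant. Choose $b$ so that $\E X_i^2=1$. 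This forces $b^2=(1-\delta a^2)/(1-\delta)$, which lies in $[1/2,1]$ once $c$ is small, because $\delta a^2=A^2p\lambda/n\le A^2c$. Centering is automatic by symmetry. For the $\psi_2$ bound with $K_0\ge A$,
\[
 \E e^{X_i^2/K_0^2}\le e^{b^2/K_0^2}+\delta\left(\frac{en}{p}\right)^{A^2/K_0^2}\le e^{1/K_0^2}+e\,\delta^{1-A^2/K_0^2},
\]
and taking $K_0$ a large multiple of $A$ makes this at most $2$.

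For the functional, take for each $S\subset[n]$ with $|S|=p$ the orthogonal projection $\Pi_S$ on $H_d$ onto the span of the basis tensors $e_{i_1}\ot\cdots\ot e_{i_d}$ having exactly one index in $S$. Set
\[
 f(T)=\max_{|S|=p}\norm{\Pi_ST}.
\]
This is a maximum of seminorms, each $1$-Lipschitz, so $f$ is convex and one-Lipschitz. Expanding $x^{\otimes d}$ in the basis gives
\[
 \norm{\Pi_Sx^{\otimes d}}^2=d\,a_S(x)\,\big(\norm x^2-a_S(x)\big)^{d-1},
 \qquad a_S(x)=\sum_{i\in S}x_i^2 .
\]
Hence
\[
 f(X^{\otimes d})=\max_{|S|=p}\sqrt{d\,a_S}\,\big(\norm X^2-a_S\big)^{(d-1)/2},
\]
and the maximizing $S$ essentially takes the top $p$ squared coordinates.

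Let $N=\#\{i:|X_i|=a\}$, so $N\sim\mathrm{Bin}(n,p/n)$ with mean $p$. I would show that each of the events $E_+=\{N\ge 2p\}$ and $E_-=\{N\le p/4\}$ has probability at least $e^{-Cp}$; here $p$ being an integer matters, and the bound follows from Stirling for binomial point probabilities. On $E_+$ the top-$p$ sum is at least $pa^2=A^2p\lambda$. On $E_-$ every $p$-set has $a_S\le \tfrac p4a^2+pb^2\le \tfrac{A^2}{4}p\lambda+p$. I also need $\norm X^2\in[n/2,2n]$ on the relevant event, and that $a_S\le\norm X^2/2$.

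The main obstacle is the factor $(\norm X^2-a_S)^{(d-1)/2}$, which must be comparable to $n^{(d-1)/2}$ in both events, with the same constant. I would handle it as follows.
\begin{itemize}
 \item Condition on $N$ and on which coordinates are large. The small coordinates contribute exactly $(n-N)b^2$, and $N\le Cp$ may be imposed together with $E_+$ at a cost $e^{-Cp}$. So $\norm X^2=n+O(p\lambda)$ deterministically on these events; here $\delta a^2\le A^2c$ is used.
 \item By the hypothesis $dp\lambda\le cn$,
 \[
  \big(1+O(p\lambda/n)\big)^{(d-1)/2}=1+O(Cc)\cdot O(1),
 \]
 so this factor stays within $[1-\epsilon,1+\epsilon]$ on both events once $c$ is small relative to $A$.
 \item Since $s\mapsto\sqrt s\,(m-s)^{(d-1)/2}$ is increasing for $s\le m/d$, and $a_S\le C p\lambda\le m/d$ here, the maximizing $S$ takes the largest coordinates.
\end{itemize}
Putting these together, $f\ge(1-\epsilon)A\sqrt{dp\lambda}\,n^{(d-1)/2}$ on $E_+$, while $f\le(1+\epsilon)(A/2+1)\sqrt{dp\lambda}\,n^{(d-1)/2}$ on $E_-$. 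For $A$ large these values are separated by $c\,n^{(d-1)/2}\sqrt{dp\lambda}$.

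Finally, for any $m\in\R$ one of the two events puts $f(X^{\otimes d})$ at distance at least half this gap from $m$, and that event has probability at least $e^{-Cp}$.
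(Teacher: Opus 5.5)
Your proposal is correct and follows essentially the same route as the paper. Both use the same symmetric four-point law with rare spikes of size $\asymp\sqrt{\log(en/p)}$ and variance fixed by $b$. Both use the same functional $f(T)=\max_{\abs S=p}\norm{P_ST}$ together with the identity $\norm{P_Sx^{\otimes d}}^2=d\,a_S(\norm x^2-a_S)^{d-1}$. Both then separate two events of probability at least $e^{-Cp}$.

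The only difference is the choice of events. The paper compares $\{N=0\}$, where $f$ equals the explicit constant $\sqrt d\,b^d\sqrt p\,(n-p)^{(d-1)/2}$, with $\{N\ge p/2\}$, which has probability at least $1/8$ by Paley--Zygmund. That choice avoids your two-sided control of the radial factor, your monotonicity step and the cap $N\le Cp$. None of these is actually needed anyway: on $\{N\ge p\}$ one already has $\norm X^2-pa^2\ge(n-p)b^2$.
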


\begin{proof}
Put
\[
 \rho=\frac pn,
 \qquad
 a^2=16\log(en/p),
 \qquad
 b^2=\frac{1-\rho a^2}{1-\rho}.
\]
Since $\rho=p/n$ and $a^2=16\log(en/p)$,
\[
 1-b^2=\frac{\rho(a^2-1)}{1-\rho}
 \le C\frac pn\log(en/p).
\]
After reducing the absolute constant in the assumptions, $b^2\ge3/4$.
Moreover, $dp\log(en/p)\le cn$ implies
$d(1-b^2)\le Cc$, and therefore
\[
 b^{d-1}\ge\exp[-C d(1-b^2)]\ge c.
\]
Let the variables be independent with
\[
 \Pp\{X_i=a\}=\Pp\{X_i=-a\}=\frac\rho2,
 \qquad
 \Pp\{X_i=b\}=\Pp\{X_i=-b\}=\frac{1-\rho}{2}.
\]
They are centered and have variance one by the definition of $b$. Choose
$K_0$ so that
\[
 \theta:=\frac{16}{K_0^2}\le\frac12,
 \qquad
 e^{1/K_0^2}\le\frac32.
\]
Since $\rho=p/n\le c$, reduce $c$ so that
\[
 e^\theta c^{1-\theta}\le\frac12.
\]
As $b^2\le1$,
\begin{align*}
 \E e^{X_i^2/K_0^2}
 &\le e^{1/K_0^2}+\rho\left(\frac e\rho\right)^\theta\\
 &=e^{1/K_0^2}+e^\theta\rho^{1-\theta}\\
 &\le2.
\end{align*}
Thus $\norm{X_i}_{\psi_2}\le K_0$.

For $S\subset\{1,\ldots,n\}$, let $V_S$ be the coordinate subspace
spanned by $(e_i)_{i\in S}$. Let $E_S\subset H_d$ be the orthogonal sum of
the $d$ tensor spaces in which exactly one slot lies in $V_S$ and all other
slots lie in $V_S^\perp$. Let $P_S$ be the orthogonal projection onto
$E_S$. Define
\[
 f(T)=\max_{\abs S=p}\norm{P_ST}.
\]
This function is convex and one-Lipschitz.

Write $x=x_S+x_{S^c}$. The $d$ parts of $P_Sx^{\otimes d}$ are orthogonal.
Thus
\begin{equation}\label{eq:one-slot-projection}
 \norm{P_Sx^{\otimes d}}
 =\sqrt d\norm{x_S}\norm{x_{S^c}}^{d-1}.
\end{equation}
Let $N$ be the number of coordinates with absolute value $a$. On $N=0$,
\[
 f(X^{\otimes d})
 =\sqrt d\,b^d\sqrt p\,(n-p)^{(d-1)/2}=:z_0.
\]
On $N\ge p/2$, choose $S$ of size $p$ containing at least $p/2$ coordinates
with absolute value $a$. For this set $S$, the identity
\[
 \norm{P_Sx^{\otimes d}}
 =\sqrt d\,\norm{x_S}\norm{x_{S^c}}^{d-1}
\]
gives
\[
 f(X^{\otimes d})
 \ge\sqrt d\,a\sqrt{p/2}\,b^{d-1}(n-p)^{(d-1)/2}=:z_1.
\]
The assumption $dp\log(en/p)\le cn$ gives
\[
 d\frac pn\le c.
\]
Since $p/n\le c\le1/2$,
\[
 \left(1-\frac pn\right)^{(d-1)/2}
 \ge \exp\left(-C d\frac pn\right)\ge c.
\]
Also $a\ge4$ and $b\le1$, so
\[
 \frac{a}{\sqrt2}-b\ge ca.
\]
Thus
\begin{align*}
 z_1-z_0
 &=\sqrt{dp}\,b^{d-1}(n-p)^{(d-1)/2}
 \left(\frac a{\sqrt2}-b\right)\\
 &\ge c n^{(d-1)/2}\sqrt{dp\log(en/p)}.
\end{align*}
The variable \(N\) is binomial with mean \(p\). Since $p/n\le c\le1/2$,
\[
 \Pp\{N=0\}=(1-p/n)^n\ge e^{-2p}.
\]
Also
\[
 \E N^2=p^2+p(1-p/n)\le p^2+p.
\]
Therefore
\[
 \Pp\{N\ge p/2\}
 \ge\frac{(\E N-p/2)^2}{\E N^2}
 \ge\frac18.
\]
One of these two events is at
distance at least $(z_1-z_0)/2$ from any fixed $m$.
\end{proof}

\begin{proof}[Proof of \Cref{thm:lower}]
The first claim is \Cref{prop:radial-lower}. The second claim is
\Cref{prop:second-lower}. In the range $p\le cn$,
$\log(en/p)\asymp\log(e+n/p)$, so the logarithmic conventions in the
proposition and in \eqref{eq:lower-second} are equivalent up to absolute
constants. If $p\le cn/d^2$, then
\[
 dp\log(en/p)\le cn
\]
after reducing $c$. Indeed, the function
$x\mapsto x\log(e/x)$ is increasing on $(0,1]$, so, with $x=p/n$,
\[
 d\frac pn\log\frac{en}{p}
 \le \frac{c}{d}\log\left(\frac{e d^2}{c}\right)
 \le Cc\log(e/c).
\]
Since $c\log(e/c)\to0$ as $c\downarrow0$, \Cref{prop:second-lower} applies
after reducing $c$. The maximum of the two lower scales is comparable with
their sum.
\end{proof}

\medskip
\noindent\textit{Why the mean squared distance bound cannot be improved.}

For $x,y\in RB_2^n$, expand the changed tensor slot in
\eqref{eq:tensor-telescoping} in the standard basis. Terms with different
coordinate indices are orthogonal. Hence
\[
 \norm{x^{\otimes d}-\int y^{\otimes d}\,d\nu(y)}
 \le dR^{d-1}
 \left(\sum_i\left[\int\abs{x_i-y_i}\,d\nu(y)\right]^2\right)^{1/2}.
\]
It gives the correct factor for changes in the norm. It gives the wrong factor for directions orthogonal to $x$.
Replacing $d$ by $\sqrt d$ in this inequality is false, even on a sphere.
The missing contribution is
therefore contained in the quadratic remainder.

For vectors with $R_-\le\norm x,\norm y\le R_+$, \Cref{lem:nearby-norms}
replaces the factor $d$ in the linear part by $\sqrt d$ and adds the quadratic
quantity
\[
 \int\norm{x-y}^2\,d\nu(y).
\]
The coupling of \Cref{thm:product-coupling} controls both quantities needed
here. Their entropy bounds are
\[
 \Psi_n(H)
 \quad\text{and}\quad
 \sqrt{nH}+H,
\]
respectively. We now show that the factor $\sqrt{nH}$ in the second bound
cannot be replaced by a smaller order uniformly over the subgaussian class.
For probability measures $\Pp,Q$ on $\R^n$, write
\[
 W_2(\Pp,Q)^2
 =\inf\{\E\norm{X-Y}^2:(X,Y)\text{ is a coupling of }\Pp\text{ and }Q\}.
\]

\begin{proposition}
\label{prop:strong-cost-optimal}
There are product measures $\Pp,Q$ on $\{-1,1\}^n$ such that
$D(Q\Vert \Pp)\asymp H$ and
\begin{equation}\label{eq:strong-cost-lower}
 W_2(\Pp,Q)^2\ge c\sqrt{nH}
\end{equation}
whenever $0<H\le cn$. Thus the term $\sqrt{nH}$ in
\eqref{eq:product-strong} is optimal.
\end{proposition}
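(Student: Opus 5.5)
Take $\Pp$ to be the uniform product measure on $\{-1,1\}^n$ (symmetric signs, so $\psi_2$-norm is an absolute constant). Take $Q=\mu_\varepsilon^{\otimes n}$, where $\mu_\varepsilon\{1\}=(1+\varepsilon)/2$ and $\mu_\varepsilon\{-1\}=(1-\varepsilon)/2$, with $\varepsilon=\sqrt{H/n}$. The restriction $H\le cn$ guarantees $\varepsilon\le c^{1/2}<1/2$, so the measure is well defined.

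For the entropy, additivity over coordinates gives
\[
D(Q\Vert\Pp)=n\,D(\mu_\varepsilon\Vert\mu_0).
\]
The one-dimensional divergence is
\[
D(\mu_\varepsilon\Vert\mu_0)=\tfrac{1+\varepsilon}{2}\log(1+\varepsilon)+\tfrac{1-\varepsilon}{2}\log(1-\varepsilon),
\]
and a Taylor expansion shows it is $\asymp\varepsilon^2$ for $\varepsilon\le1/2$. Hence $D(Q\Vert\Pp)\asymp n\varepsilon^2=H$.

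For the transport cost, take any coupling $(X,Y)$. On $\{-1,1\}^n$ we have $\norm{X-Y}^2=4\#\{i:X_i\ne Y_i\}$. I will use a linear test instead of counting. Since $\abs{X_i-Y_i}\le 2$ and $(X_i-Y_i)^2=2\abs{X_i-Y_i}$ coordinatewise,
\[
\E\norm{X-Y}^2=2\sum_i\E\abs{X_i-Y_i}\ge 2\,\E\sum_i(Y_i-X_i).
\]
The right side equals $2\bigl(\E_Q\sum_iY_i-\E_\Pp\sum_iX_i\bigr)=2n\varepsilon=2\sqrt{nH}$. So the lower bound holds for every coupling, and taking the infimum gives
\[
W_2(\Pp,Q)^2\ge 2\sqrt{nH}\ge c\sqrt{nH}.
\]

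The argument is essentially that the mean shift is $\varepsilon$ per coordinate at entropy cost $\varepsilon^2$ per coordinate, and on the discrete cube squared displacement equals absolute displacement. The main point to check is the entropy comparability $D(\mu_\varepsilon\Vert\mu_0)\asymp\varepsilon^2$ uniformly for $\varepsilon\le 1/2$, which is elementary.

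For the remark that the term $\sqrt{nH}$ in \eqref{eq:product-strong} is optimal: in the range $H\le cn$ we have $\sqrt{nH}\ge H/\sqrt c$, so the bound $C K^2(\sqrt{nH}+H)$ is of order $\sqrt{nH}$. No bound of the form $o(\sqrt{nH})+CH$ can therefore hold uniformly.
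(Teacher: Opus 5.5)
Your proof is correct and follows the paper's construction: uniform signs against a product of biased signs with bias of order $\sqrt{H/n}$, additivity of relative entropy over the product, and a coordinatewise lower bound on the transport cost. The only difference is cosmetic. You justify the transport lower bound with the linear test $(X_i-Y_i)^2=2\abs{X_i-Y_i}\ge 2(Y_i-X_i)$, which makes explicit the paper's remark that any coupling must move mass $\delta$ across distance two; your bound $2n\varepsilon$ equals the paper's $4n\delta$ with $\delta=\varepsilon/2$.
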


\begin{proof}
Let $\Pp_1$ be uniform on $\{-1,1\}$. Put
\[
 Q_1\{1\}=\frac12+\delta,
 \qquad
 Q_1\{-1\}=\frac12-\delta,
\]
where $0<\delta\le1/4$. Then
\[
 D(Q_1\Vert \Pp_1)\asymp\delta^2.
\]
Any coupling must move mass $\delta$ across distance two. Hence
\[
 W_2(\Pp_1,Q_1)^2=4\delta.
\]
Take $\Pp=\Pp_1^{\otimes n}$ and $Q=Q_1^{\otimes n}$. For any coupling
$(X,Y)$ of $\Pp$ and $Q$, each pair $(X_i,Y_i)$ is a coupling of $\Pp_1$ and
$Q_1$, so
\[
 \E\norm{X-Y}^2
 =\sum_{i=1}^n\E|X_i-Y_i|^2
 \ge4n\delta.
\]
The product of the one-dimensional optimal couplings attains equality. Thus
\[
 W_2(\Pp,Q)^2=4n\delta.
\]
Also relative entropy adds over products, and a direct calculation gives
\[
 D(Q\Vert \Pp)=nD(Q_1\Vert \Pp_1)\asymp n\delta^2.
\]
Choose $\delta\asymp\sqrt{H/n}$. This proves
\eqref{eq:strong-cost-lower}.
\end{proof}

\medskip
\noindent\textit{Additional remarks.}

\begin{remark}
If the coordinates are bounded, Talagrand's convex-distance inequality
\cite[Section~4.1, Theorem~4.1.1]{Talagrand} gives
\[
 \norm{Z-\E Z}_{L_p}
 \le C_KLd n^{(d-1)/2}\sqrt p
\]
in the same moderate range. The logarithm in the second term is absent. This agrees with the distinction
between bounded product measures and the full subgaussian class: Huang--Tikhomirov
prove the logarithmic subgaussian upper bound in \cite[Theorem~1.3]{HuangTikhomirov}
and its sharpness in \cite[Proposition~1.4]{HuangTikhomirov}.
\end{remark}

\begin{remark}
If $U$ is uniform on $\sqrt nS^{n-1}$, then
\[
 \norm{x^{\otimes d}-y^{\otimes d}}^2
 =2n^d\left[1-\left(\frac{\ip{x}{y}}n\right)^d\right]
 \le dn^{d-1}\norm{x-y}^2.
\]
Thus the map $x\mapsto x^{\otimes d}$ is $\sqrt d\,n^{(d-1)/2}$-Lipschitz
on the sphere $\sqrt nS^{n-1}$. This fixed-radius calculation is consistent
with the scale $dn^{d-1}$ suggested by the derivative formula.
\end{remark}

\begin{remark}
The same coupling argument applies when independent base vectors are used
with multiplicities $r_1,\ldots,r_m$. For changes parallel to the base
vectors, repeated copies add before squaring and give the parameter
$\sum_j r_j^2$. For changes orthogonal to the base vectors, the squared
contributions add and give $\sum_j r_j$. We do not develop this extension
here.
\end{remark}

\begin{remark}
\Cref{thm:full-range,thm:bounded-full} give convex concentration for one
symmetric tensor in the full natural range. The bounded theorem is the
symmetric analogue of Vershynin's convex concentration theorem for simple
tensors \cite[Theorem~1.3]{Vershynin}. The matching lower bounds in
\Cref{thm:lower,cor:rate-sharpness} show minimax sharpness even under a fixed absolute subgaussian bound.

The coupling bounds themselves are already optimal in the general
subgaussian class by \Cref{prop:strong-cost-optimal}. Stronger estimates for
special subclasses therefore require additional structure beyond these
general coupling bounds. We treat the Euclidean subclass separately.
\end{remark}

\section*{Acknowledgments}
The author is grateful to Professor Hanchao Wang for his helpful discussions
and guidance.

\end{document}